\newtheorem{theorem}{Theorem}[section]
\newtheorem{corollary}[theorem]{Corollary}
\newtheorem{definition}[theorem]{Definition}
\newtheorem{lemma}[theorem]{Lemma}
\newtheorem{remark}[theorem]{Remark}
\newenvironment{proof}[1][Proof]{\textbf{#1.} }{\hfill\rule{0.5em}{0.5em}}
{\catcode`\@=11\global\let\AddToReset=\@addtoreset
	\AddToReset{equation}{section}
	
	\AddToReset{theorem}{section}

	\def\nc{\newcommand}
	\def\ga{\gamma}
	
	\def\lam{\lambda}\def\ep{\epsilon}\def\ka{\kappa}
	
	\def\om{\omega}

	 \def\Om{\Omega}

	\nc\pa{\partial}
	
	\nc\CC{\mathbb{C}}
	\nc\RR{\mathbb{R}}
	\nc\QQ{\mathbb{Q}}
	\nc\ZZ{\mathbb{Z}}
	\nc\NN{\mathbb{N}}

	\newcommand{\vertiii}[1]{{\left\vert\kern-0.25ex\left\vert\kern-0.25ex\left\vert #1 
			\right\vert\kern-0.25ex\right\vert\kern-0.25ex\right\vert}}

	
\begin{document}	
		\title{A comparison estimate for singular $p$-Laplace equations and its consequences}
		\author{
			{\bf Quoc-Hung Nguyen\thanks{E-mail address: qhnguyen@amss.ac.cn, Academy of Mathematics and Systems Science, Chinese Academy of Sciences, Beijing, 100190, China.} ~ and~ Nguyen Cong Phuc\thanks{E-mail address: pcnguyen@math.lsu.edu, Department of Mathematics, Louisiana State University, 303 Lockett Hall,
					Baton Rouge, LA 70803, USA. }}}
		\date{}  
		\maketitle
		\begin{abstract}
		Comparison estimates are an important technical device in the study  of regularity problems for quasilinear possibly degenerate elliptic and parabolic equations. Such tools have been employed 
		indispensably in many papers of Mingione, Duzaar-Mingione, and Kuusi-Mingione, etc. on certain measure datum problems to obtain pointwise bounds for solutions and their full or fractional derivatives in terms of appropriate linear or nonlinear potentials. However, a comparison estimate for  $p$-Laplace type elliptic equations with measure data
		is still unavailable in the strongly singular case $1< p\leq \frac{3n-2}{2n-1}$, where $n\geq 2$ is the dimension of the ambient space. This issue will be completely resolved
		in this work by proving a comparison estimate in a slightly larger range $1<p<3/2$.   Applications include 	a `sublinear'  Poincar\'e type inequality, pointwise bounds for solutions and their derivatives by Wolff's and Riesz's potentials, respectively.  	Some global pointwise and  weighted estimates are also obtained for bounded domains, which enable us to treat a  quasilinear Riccati type equation with possibly sublinear growth in the gradient. 
				
			\medskip
			
			\medskip
			
			\medskip
			
			\noindent MSC2020: primary: 35J92, 35J70, 35J62; secondary: 35J75, 35J66, 31B15.
			
			\medskip
			
			\noindent Keywords: quasilinear equation; $p$-Laplace equation;  measure data;    pointwise estimate; comparison estimate; weighted estimate;
			Riccati type equation; capacity.
		\end{abstract}   
		
		\tableofcontents
		
		\section{Introduction and main results} 
		
		In this paper, we are concerned with  the quasilinear elliptic equation with measure data 
		\begin{equation}\label{quasi-measure}
		-\operatorname{div}(A(x,\nabla u))=\mu
		\end{equation}
	
		in a bounded open subset $\Omega$ of $\mathbb{R}^n$, $n\geq 2$. Here $\mu$ is a finite signed measure in $\Omega$ and
		the nonlinearity  $A = (A_1, \dots, A_n):\mathbb{R}^n\times \mathbb{R}^n\to \mathbb{R}^n$ is a vector valued function.\\
		Our main assumptions on $A$ are as follows. There exist $\Lambda\geq 1$ and $p\in (1,n)$ such that
		for every $x\in \mathbb{R}^n$ and every $(\xi,\eta)\in \mathbb{R}^n\times \mathbb{R}^n\backslash\{(0,0)\}$,
		\begin{equation}
		\label{condi1}| A(x,\xi)|\le \Lambda |\xi|^{p-1}, \quad | D_\xi A(x,\xi)|\le \Lambda |\xi|^{p-2},
		\end{equation}
		\begin{equation}
		\label{condi2}  \langle D_\xi A(x,\xi)\eta,\eta\rangle\geq \Lambda^{-1}  |\xi|^{p-2} |\eta|^2.
		\end{equation}
		A typical model for \eqref{quasi-measure} is obviously given by the $p$-Laplace equation with measure data
		\begin{equation*}
		-\Delta_p \, u:= -{\rm div}(|\nabla u|^{p-2} \nabla u)=\mu \quad \text{in~} \Omega.
		\end{equation*}
		For a reason that will be explained momentarily, unless otherwise stated,  in this paper we only consider the `singular' case 
		\begin{equation*}
		1<p<3/2.
		\end{equation*}
		An important technical tool in the study of  regularity problems for equation \eqref{quasi-measure} is a comparison
		estimate that connects the solution 
		of measure datum problem to a solution of a homogeneous problem. 
		To describe it,  let $Q_r(x_0)$  denote the open cube $Q_r(x_0):= x_0+ (-r,r)^n$ with center $x_0\in\RR^n$ and side-length  $2r$.  
		Let  $u\in W_{\rm loc}^{1,p}(\Omega)$ be a solution of \eqref{quasi-measure}.
		For  a cube $Q_r=Q_{r}(x_0) \Subset\Omega$, we  then consider the  unique solution $w\in W_0^{1,p}(Q_r(x_0))+u$ to the local interior problem

		\begin{equation}\label{eq1}
		\left\{ \begin{array}{rcl}
		- \operatorname{div}\left( {A(x,\nabla w)} \right) &=& 0 \quad \text{in} \quad Q_r(x_0), \\ 
		w &=& u\quad \text{on} \quad \partial Q_r(x_0).  
		\end{array} \right.
		\end{equation}

		For $p>\frac{3n-2}{2n-1}$, this comparison estimate reads as follows.

		\begin{lemma}[\cite{Duzamin2, M07, HPanna}]\label{com1} Assume that $p>\frac{3n-2}{2n-1}$. Then we have  
			\begin{align}\label{oldcom}
			\left(	\fint_{Q_{r}}|\nabla u-\nabla w|^{\gamma}dx\right)^{\frac{1}{\gamma}} \leq C \left[\frac{|\mu|(Q_{r})}{r^{n-1}}\right]^{\frac{1}{p-1}} +C\frac{|\mu|(Q_{r})}{r^{n-1}}\left(\fint_{Q_{r}}|\nabla u|^{\gamma}\right)^{\frac{2-p}{\gamma}}dx.
			\end{align}	
		\end{lemma}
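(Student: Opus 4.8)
The plan is to test both equations \eqref{quasi-measure} and \eqref{eq1} against a truncation of the difference $v := u - w \in W_0^{1,p}(Q_r)$ and exploit monotonicity of the vector field $A$. Subtracting the two weak formulations gives, for every $\varphi \in W_0^{1,p}(Q_r)$,
\begin{equation*}
\int_{Q_r} \langle A(x,\nabla u) - A(x,\nabla w), \nabla \varphi \rangle\, dx = \int_{Q_r} \varphi\, d\mu.
\end{equation*}
Because $\mu$ need not belong to the dual of $W_0^{1,p}(Q_r)$ in the strongly singular range, one cannot simply take $\varphi = v$; instead I would choose $\varphi = T_k(v) := \max\{-k, \min\{k, v\}\}$ for a level $k>0$, so that the right-hand side is controlled by $k\,|\mu|(Q_r)$. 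The monotonicity inequality coming from \eqref{condi2} yields the weighted energy bound
\begin{equation*}
\int_{\{|v|<k\}} \big(|\nabla u| + |\nabla w|\big)^{p-2}\, |\nabla v|^2\, dx \leq C\, k\, |\mu|(Q_r),
\end{equation*}
using the standard algebraic lower bound $\langle |a|^{p-2}a - |b|^{p-2}b, a-b\rangle \geq c\,(|a|+|b|)^{p-2}|a-b|^2$ valid for $1<p<2$, together with \eqref{condi1} to pass from $A(x,\cdot)$ to the $p$-Laplacian model inequality.

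The core of the argument is then to convert this truncated weighted estimate into an unweighted bound on $\fint_{Q_r}|\nabla v|^\gamma\,dx$ for a suitable sub-natural exponent $\gamma$. The plan is the familiar interpolation/Hölder device: write $|\nabla v|^\gamma = \big[(|\nabla u|+|\nabla w|)^{p-2}|\nabla v|^2\big]^{\gamma/2}\,(|\nabla u|+|\nabla w|)^{\gamma(2-p)/2}$ and apply Hölder with exponents $2/\gamma$ and $2/(2-\gamma)$, which reduces matters to controlling $\int (|\nabla u|+|\nabla w|)^{\gamma(2-p)/(2-\gamma)}$ — a quantity that, for $\gamma$ chosen close to but below $p$, sits just below the natural energy exponent $p$ and hence is finite and controllable by $\fint_{Q_r}|\nabla u|^\gamma\,dx$ after using energy comparison $\int_{Q_r}|\nabla w|^p \leq \int_{Q_r}|\nabla u|^p$. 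The role of the dyadic level $k$ is handled by summing the truncated estimates over $k = k_j = 2^j \lambda$ with $\lambda := [|\mu|(Q_r)/r^{n-1}]^{1/(p-1)}$, i.e. by decomposing $Q_r$ into the super-level sets $\{k_j \leq |v| < k_{j+1}\}$ and applying the truncated energy bound on each annulus; the geometric series in $j$ converges precisely because of the restriction $\gamma < $ (some explicit threshold) and this is where the hypothesis on $p$ enters in the original lemma.

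The main obstacle I anticipate is the bookkeeping that pins down the admissible range of $\gamma$ and shows the summation over dyadic levels converges: one must check that the exponent $\gamma(2-p)/(2-\gamma)$ stays strictly below $p$ (so the $|\nabla u|$-term is subcritical) while simultaneously $\gamma$ is large enough that the measure term scales correctly, and the two constraints together are what force $p > \frac{3n-2}{2n-1}$ in Lemma \ref{com1} — a Sobolev-type estimate on $T_k(v)$ via the embedding $W_0^{1,\cdot}(Q_r)\hookrightarrow L^{\cdot}(Q_r)$ is needed to close the chain, and the Sobolev exponent degenerates exactly at that threshold. I would organize the proof so that this scaling computation is isolated in one lemma (normalizing to $r=1$ by scaling $u \mapsto u_r(x) = u(x_0+rx)/r$ and $\mu$ accordingly, to reduce notational clutter), then reinstate $r$ at the end by homogeneity. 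The passage from the model $p$-Laplacian inequalities to the general $A$ satisfying \eqref{condi1}--\eqref{condi2} is routine and should be dispatched quickly using that the structural constants depend only on $n, p, \Lambda$.
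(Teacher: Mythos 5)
The paper does not prove Lemma~\ref{com1} --- it is quoted from \cite{Duzamin2, M07, HPanna} --- so there is no in-paper proof to measure against; but your general scheme (test with truncations $T_k(u-w)$, extract the weighted energy bound from the monotonicity of $A$, decompose into dyadic super-level sets around the scale $\lambda=[|\mu|(Q_r)/r^{n-1}]^{1/(p-1)}$, interpolate via H\"older to remove the degenerate weight, and close with a Sobolev/Chebyshev decay estimate on the level-set measures) is indeed the machinery of those references, and it is also what the paper's own Lemma~\ref{comsqrt3} emulates for the smaller range $1<p<3/2$.

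There are, however, two concrete errors in the exponent bookkeeping that would prevent the argument from closing as written. First, the suggestion to take $\gamma$ ``close to but below $p$'' is not compatible with the lemma: the admissible $\gamma$ are $\gamma=1$ when $p>2-1/n$ and $\gamma\in\big(\tfrac{n}{2n-1},\tfrac{n(p-1)}{n-1}\big)$ when $\tfrac{3n-2}{2n-1}<p\le 2-\tfrac1n$, and $\tfrac{n(p-1)}{n-1}<p$ whenever $p<n$, so the correct $\gamma$ is always well below $p$. With $\gamma$ near $p$ the compensating H\"older factor carries the exponent $\gamma(2-p)/(2-\gamma)\to p$, and controlling $\int(|\nabla u|+|\nabla w|)^{\approx p}$ by Jensen plus energy comparison only returns $\fint|\nabla u|^p$, not the required $\big(\fint|\nabla u|^\gamma\big)^{(2-p)/\gamma}$ of \eqref{oldcom}. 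Second, $|\nabla w|$ appears inside that low-exponent factor, and the Dirichlet energy inequality $\int|\nabla w|^p\le C\int|\nabla u|^p$ is insufficient to eliminate it, for the same reason --- it only relates $p$-averages. You need either a reverse H\"older inequality for $\nabla w$ (solutions of the homogeneous equation satisfy local higher integrability with constants allowing passage between arbitrary positive exponents up to cube shrinkage, after which $|\nabla w|$ can be replaced by $|\nabla u|$ at the level of a $\gamma$-average by absorption of $|\nabla v|$), or a pointwise decomposition in the spirit of the paper's \eqref{gp}, namely $|\nabla u-\nabla w|\lesssim g^{1/p}+g^{1/2}|\nabla u|^{(2-p)/2}$ with $g=|\nabla u-\nabla w|^2/(|\nabla u|+|\nabla w|)^{2-p}$, which removes $|\nabla w|$ from the right-hand side altogether. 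Without one of these devices the decomposition you propose is circular in $|\nabla w|$.
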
	
		Here $\gamma=1$ in the case $p>2-1/n$ and when $p\geq 2$ the second term on the right can be dropped  (see \cite{Duzamin2}, \cite{M07}). 
		For $\frac{3n-2}{2n-1}<p\leq 2-\frac{1}{n}$, $ \gamma\in (\frac{n}{2n-1}, \frac{n(p-1)}{n-1})$ (see \cite{HPanna}).

		In fact, as demonstrated in our recent work \cite[Lemma 2.1]{HP2}, the comparison bound \eqref{oldcom} also holds for all $1<p\leq \frac{3n-2}{2n-1}$, as long as we  replace 
		the quantity $(\fint_{Q_{r}}|\nabla u|^{\gamma}dx)^{(2-p)/\gamma}$ on the right-hand side with the  quantity
		$\fint_{Q_{r}}|\nabla u|^{2-p}dx$. The power $|\nabla u|^{2-p}$ arrives naturally from the structure of $p$-Laplace type equations when $p<2$, (see \eqref{gp}). However, the quantity
		$\fint_{Q_{r}}|\nabla u|^{2-p}dx$ is useful only when $2-p< \frac{n(p-1)}{n-1}$ as $\frac{n(p-1)}{n-1}$ is the optimal (weak) integrability of the gradient of  fundamental solution. This shows that  the quantity
		$\fint_{Q_{r}}|\nabla u|^{2-p}dx$ is `super-critical' in the singular case $1<p\leq \frac{3n-2}{2n-1}$.

		The  main result of this paper is the extension of the comparison estimate to the remaining open case $1<p\leq \frac{3n-2}{2n-1}$. As $\frac{3n-2}{2n-1}<3/2$, it is enough to consider the range $1<p<3/2$ mentioned above. In what follows, we shall fix a constant $\kappa$ defined by
		$$\kappa=(p-1)^2/2.$$

		\begin{theorem}\label{maininterior} Suppose that $n\geq 2$ and $Q_{\Sigma \,r}(x_0)\subset\Om$ for $r>0$ and $\Sigma \in (1,2]$. Let  $u\in W_{\rm loc}^{1,p}(\Omega)$ be a solution of \eqref{quasi-measure} and let $w$ be as \eqref{eq1}. Then 
			\begin{align}\label{compgradu}
			&\left(	\fint_{Q_r(x_0)}|\nabla (u-w)|^{\ka}\right)^{\frac{1}{\kappa}} + \frac{1}{r}\left(	\fint_{Q_r(x_0)}|u-w|^{\ka}\right)^{\frac{1}{\kappa}}\nonumber\\
			 &\qquad \lesssim \left(\frac{|\mu|(Q_{\Sigma r}(x_0))}{r^{n-1}}\right)^{\frac{1}{p-1}}   + \frac{|\mu|(Q_{\Sigma r}(x_0))}{r^{n-1}}	
			\left(	\fint_{Q_{\Sigma r}(x_0)}|\nabla u|^{\ka}\right)^{\frac{2-p}{\kappa}},
			\end{align} 
			and for any $\lambda \in \RR$,
			\begin{align*}
			\left(	\fint_{Q_r(x_0)}|u-w|^{\ka}\right)^{\frac{1}{\kappa}} & \lesssim\left(\frac{|\mu|(Q_{\Sigma r}(x_0))}{r^{n-p}}\right)^{\frac{1}{p-1}}+ \frac{|\mu|(Q_{\Sigma r}(x_0))}{r^{n-p}}	
			\left(	\fint_{Q_{\Sigma r}(x_0)}|u-\lambda|^{\ka}\right)^{\frac{2-p}{\kappa}}.
			\end{align*}
		\end{theorem}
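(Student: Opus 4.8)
The plan is to run a Moser/De Giorgi--type iteration on the difference $v := u - w$, combined with an energy comparison between $u$ and $w$, but with the crucial twist that all the exponents stay at the subcritical level $\kappa = (p-1)^2/2$ so that the inequalities survive in the strongly singular regime $1<p<3/2$. First I would record the two basic ingredients. The first is the standard weak formulation: subtracting the equations for $u$ and $w$ and testing with an admissible truncation of $v$ yields, after using the monotonicity inequality (the vector-field version of $\langle |a|^{p-2}a - |b|^{p-2}b, a-b\rangle \gtrsim (|a|+|b|)^{p-2}|a-b|^2$ coming from \eqref{condi1}--\eqref{condi2}), a bound of the schematic form $\int (|\nabla u| + |\nabla w|)^{p-2}|\nabla v|^2 \lesssim \int |v|\,d|\mu|$ on sub-cubes. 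The second is the reverse-H\"older / Caccioppoli information for $w$ itself (it is a bounded positive weak supersolution away from the data, so $\nabla w \in L^q_{\rm loc}$ for $q$ slightly above $\frac{n(p-1)}{n-1}$ with scale-invariant estimates), which lets me replace quantities involving $\nabla w$ by quantities involving $\nabla u$ on the larger cube $Q_{\Sigma r}$.

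Next I would convert the $L^2$-weighted estimate into an $L^\kappa$ estimate for $\nabla v$. The key algebraic point is that, by H\"older with the weight $(|\nabla u| + |\nabla w|)^{p-2}$ (note $p-2<0$),
\begin{align*}
\int |\nabla v|^{\kappa} \le \left(\int (|\nabla u|+|\nabla w|)^{p-2}|\nabla v|^2\right)^{\kappa/2}\left(\int (|\nabla u|+|\nabla w|)^{\kappa(2-p)/(2-\kappa)}\right)^{1-\kappa/2},
\end{align*}
and one checks that the choice $\kappa = (p-1)^2/2$ makes the exponent $\kappa(2-p)/(2-\kappa)$ strictly less than the critical threshold $\frac{n(p-1)}{n-1}$ for every $p\in(1,3/2)$, so the last factor is finite and controlled by $\fint|\nabla u|^{\kappa}$ via the reverse-H\"older bound for $w$ plus a trivial estimate for $u$. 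Feeding in the energy bound $\int|v|\,d|\mu| \le |\mu|(Q_{\Sigma r})\cdot \|v\|_{L^\infty}$ — or, to avoid assuming $v\in L^\infty$, an iteration over dyadic truncation levels as in the Boccardo--Gallou\"et scheme — produces the right-hand side of \eqref{compgradu}: the term $[|\mu|(Q_{\Sigma r})/r^{n-1}]^{1/(p-1)}$ comes from the "purely nonlinear" part of the estimate and the product term $\frac{|\mu|(Q_{\Sigma r})}{r^{n-1}}(\fint|\nabla u|^\kappa)^{(2-p)/\kappa}$ from the weighted H\"older step. The term $\frac1r(\fint|u-w|^\kappa)^{1/\kappa}$ is then recovered from $\fint|\nabla v|^\kappa$ by the Sobolev--Poincar\'e inequality on $Q_r$ (valid since $v\in W_0^{1,p}(Q_r)$), after lowering the exponent to $\kappa$. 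The second, displayed inequality is the same argument but keeping only the $L^\kappa$-norm of $v$ on the left, testing instead with a truncation of $v$ weighted so that the scaling is $r^{n-p}$ rather than $r^{n-1}$, and using translation invariance $w - \lambda$ solves the same equation to put $u-\lambda$ (rather than $\nabla u$) on the right.

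The main obstacle I anticipate is precisely the borderline integrability bookkeeping: one must verify that $\kappa=(p-1)^2/2$ is small enough that \emph{every} auxiliary exponent appearing — the one in the weighted H\"older inequality above, the one needed for the reverse-H\"older estimate on $\nabla w$, and the Sobolev conjugate used in the Poincar\'e step — stays on the good side of $\frac{n(p-1)}{n-1}$ uniformly for $p\in(1,3/2)$ and $n\ge 2$, and that the resulting constants do not blow up as $p\to 3/2$ or $p\to 1$. A secondary technical nuisance is justifying the test-function manipulations when $u, w$ are merely $W^{1,p}$ with $p<3/2$ (so $\nabla u$ may fail to be in $L^2$); this is handled by the usual device of truncating $v$ at levels $k$, deriving a level-set inequality, and summing a geometric series — the same mechanism that already underlies Lemma~\ref{com1}. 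Since $\kappa<1$, some care is also needed because $\|\cdot\|_{L^\kappa}$ is not a norm; I would work with the quantity $\fint|\cdot|^\kappa$ throughout and only take the $\kappa$-th root at the very end, using that the reverse-H\"older self-improvement and the Gehring-type absorption arguments are insensitive to this.
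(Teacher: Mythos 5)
Your proposal correctly identifies the two basic ingredients (the weighted energy estimate from the monotonicity inequality, and the need to stay at subcritical exponents), and it is structurally similar to the paper's starting point. However, there is a genuine and central gap: the whole difficulty in the strongly singular range $1<p\le \frac{3n-2}{2n-1}$ is that one cannot run the $n$-dimensional Boccardo--Gallou\"et / Sobolev--Poincar\'e iteration that you invoke, and your plan does not circumvent this.

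Concretely, the step you label ``an iteration over dyadic truncation levels'' needs a decay estimate $|\{|u-w|>k\}|\lesssim k^{-\alpha}$ for some $\alpha>0$. The only way to get this is from a Sobolev--Poincar\'e inequality applied to $(T_{2k}-T_k)(u-w)$, which requires $\nabla(T_{2k}-T_k)(u-w)\in L^1$ (the space $W^{1,\gamma}_0$ with $\gamma<1$ has no Sobolev embedding). Using the pointwise inequality \eqref{gp}, $|\nabla(u-w)|\le Cg^{1/p}+Cg^{1/2}|\nabla u|^{(2-p)/2}$ on the level set $E_k$, and by $n$-dimensional H\"older the second piece is integrable only if $|\nabla u|\in L^{2-p}_{\rm loc}$ — which is exactly the obstruction that fails when $p\le\frac{3n-2}{2n-1}$ (even for the fundamental solution). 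So the dyadic iteration you rely on, together with your full-dimensional weighted H\"older inequality, cannot get started. Your later observation that $\kappa(2-p)/(2-\kappa)<\frac{n(p-1)}{n-1}$ is correct but irrelevant to this point: it concerns the $\nabla u$-factor in your H\"older bound for $\int|\nabla v|^\kappa$, not the integrability of $\nabla(T_{2k}-T_k)v$ itself, and the term $\left(\int g\right)^{\kappa/2}$ is simply not finite without the level-set decay you are trying to establish.

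What the paper actually does (Lemma~\ref{comsqrt3}) is to replace the $n$-dimensional Sobolev step with a one-dimensional Fundamental Theorem of Calculus in the $x_n$ variable: since $v=u-w$ vanishes on $\partial Q_r$, for a.e.\ $x'$ one has $|(T_{2k}-T_k)v(x',x_n)|\le\int_{-1}^1|\partial_{x_n}(T_{2k}-T_k)v|\,dx_n$, and one applies H\"older \emph{separately} in $x_n$ (with exponents $2$ and $2$) and then in $x'$ (with a much smaller exponent). This produces the mixed norm $\vertiii{\nabla u}_{L^{(p-1)(2-p)/(3-p)}_{x'}L^{2-p}_{x_n}}$ in place of $\|\nabla u\|_{L^{2-p}}$, and the crucial point — explained in the introduction around \eqref{mixednormquan} — is that this mixed norm is finite for every $p>1$ because the ``bad'' exponent $2-p$ only acts in one coordinate direction. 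A separate reverse-H\"older lemma in mixed norms (Lemma~\ref{revholderformixgrad}, using a covering of the cube by rectangles and the anisotropic Sobolev embedding of Benedek--Panzone) then bounds that mixed norm by $(\fint|\nabla u|^\kappa)^{1/\kappa}$ on the larger cube. Without this anisotropic device your plan reduces to the known argument of \cite[Lemma~2.1]{HP2}, which only works when $2-p<\frac{n(p-1)}{n-1}$, i.e.\ $p>\frac{3n-2}{2n-1}$; it does not cover the range the theorem targets.
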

		
						A boundary version of Theorem  \ref{maininterior}, where $x_0\in \partial\Om$ and $u\in W^{1,p}_0(\Om)$, is also available; see Theorem 
			\ref{mainboundary} below. Theorem \ref{maininterior} follows from  Lemmas \ref{comsqrt3},  \ref{revholderformixgrad}, and Corollary \ref{co234} below. Notice that we 
		 use the larger cube $Q_{\Sigma r}(x_0)$ on the right-hand side of the bounds in Theorem \ref{maininterior}, which makes it slightly different from Lemma \ref{com1}.
			But this is harmless in applications.  As $\kappa<\frac{n(p-1)}{n-1}$ when $1<p<2$, we see that the quantity $(\fint_{Q_{\Sigma r}(x_0)}|\nabla u|^{\ka})^{1/\kappa}$ is sub-critical
			in this case. In fact, as the proof goes, in the first step (see Lemma \ref{comsqrt3}) we obtain an inequality similar to \eqref{compgradu} but with a mixed norm quantity 
			\begin{equation}\label{mixednormquan}
			\left(\fint_{x_0'+(-r,r)^{n-1}}\left(\fint_{x_{0n}+(-r,r)} |\nabla u (x',x_n)|^{2-p} dx_n\right)^{\frac{p-1}{3-p}} dx'\right)^{\frac{3-p}{(p-1)(2-p)}}
			\end{equation}
			in place of $(\fint_{Q_{\Sigma r}(x_0)}|\nabla u|^{\ka})^{1/\kappa}$, and no enlargement of the cube is needed in this step. 
			Note that this is a substantial improvement of \cite[Lemma 2.1]{HP2} as the exponent $2-p$ is used only in one direction and the exponent for the other directions goes to zero as  $p$ approaches  $1$.  
			Another way to look at this is to consider the fundamental solution $v(x)=c\, |x|^{(p-n)/(p-1)}$, $1<p<n$, of the $p$-Laplace equation. Then $|\nabla v|\in L^{2-p}_{\rm loc}$
			if and only if $p>\frac{3n-2}{2n-1}$, whereas the mixed norm above for $\nabla v$ is finite for all $p>1$. We indeed show in Lemma  \ref{revholderformixgrad} (see also Corollary \ref{co234})  that the latter also holds 
			for the gradient of  any solution to \eqref{quasi-measure} by using the first step and a suitable reverse H\"older type inequality for gradients of solutions of the homogeneous equation
			 ${\rm div}(A(x, \nabla w))=0$. Moreover, this enables us to control the quantity in \eqref{mixednormquan} by $(\fint_{Q_{\Sigma r}(x_0)}|\nabla u|^{\ka})^{1/\kappa}$ 
			 in some way to complete the proof.
		
		For the rest of this section, we will present some selected consequences of Theorem \ref{maininterior}. More consequences  of Theorem \ref{maininterior} will be presented elsewhere in our future work.
		
		First, we have a Poincar\'e type inequality with low integrability and with a `correction' term.  It is well-known that Poincar\'e  inequality generally fails in the `sublinear' range (see \cite{BK}).  
		
		\begin{corollary}\label{poicaresub} Suppose that $n\geq 2$ and $Q_{\Sigma \,r}(x_0)\subset\Om$ for $r>0$ and $\Sigma \in (1,2]$. 
			Let  $u\in W^{1,p}_{\rm loc}(\Om)$ be a solution of \eqref{quasi-measure}. Then for any $\epsilon>0$ we have   
			\begin{align*}
			\left(	\inf_{q\in \RR}\fint_{Q_r(x_0)}|u-q|^{\ep}\right)^{\frac{1}{\ep}} \lesssim\left(\frac{|\mu|(Q_{\Sigma r}(x_0))}{r^{n-p}}\right)^{\frac{1}{p-1}}+ 	
			r \left(	\fint_{Q_{\Sigma r}(x_0)}|\nabla u|^{\ep}\right)^{\frac{1}{\ep}}.
			\end{align*}
		\end{corollary}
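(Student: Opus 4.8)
The plan is to deduce Corollary~\ref{poicaresub} directly from the second estimate of Theorem~\ref{maininterior} by choosing $\lambda$ appropriately and using the low-integrability comparison between $u$ and the $A$-harmonic replacement $w$. First I would apply the second bound of Theorem~\ref{maininterior} with $\lambda = \fint_{Q_{\Sigma r}(x_0)} u\, dx$ (or any convenient constant), which controls $(\fint_{Q_r}|u-w|^\kappa)^{1/\kappa}$ by the two terms on the right. Since $\epsilon$ is arbitrary, there are two regimes: when $\epsilon \le \kappa$, I can pass from the $L^\kappa$ comparison to an $L^\epsilon$ comparison for free by Jensen's inequality on the cube; when $\epsilon > \kappa$, I would instead invoke the higher integrability of $\nabla w$ (the same reverse-Hölder machinery behind Lemma~\ref{revholderformixgrad} and Corollary~\ref{co234}) together with interior $C^{1,\alpha}$ estimates for the homogeneous equation $\operatorname{div}(A(x,\nabla w))=0$, which upgrade the $L^\kappa$ smallness of $u-w$ to smallness in any $L^\epsilon$. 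In either case one reduces matters to estimating $\inf_q \fint_{Q_r}|w-q|^\epsilon$ for the $A$-harmonic function $w$.

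The key point is then a Poincaré-type inequality for $w$ itself. Because $w$ solves the homogeneous equation, $\nabla w$ satisfies a reverse Hölder inequality on $Q_r$ (interior regularity for solutions of $\operatorname{div}(A(x,\nabla w))=0$, cf.\ the tools already invoked for Lemma~\ref{revholderformixgrad}), so $(\fint_{Q_r}|\nabla w|^\epsilon)^{1/\epsilon} \lesssim (\fint_{Q_{\Sigma r}}|\nabla w|^\kappa)^{1/\kappa}$ — that is, on the solution $w$ the classical Poincaré inequality is available even for $\epsilon < 1$, by first going up to a Poincaré-admissible exponent via reverse Hölder and then coming back down by Jensen. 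This gives
\begin{align*}
\left(\inf_{q\in\RR}\fint_{Q_r(x_0)}|w-q|^\epsilon\right)^{\frac1\epsilon} \lesssim r\left(\fint_{Q_r(x_0)}|\nabla w|^\epsilon\right)^{\frac1\epsilon} \lesssim r\left(\fint_{Q_{\Sigma r}(x_0)}|\nabla w|^{\kappa}\right)^{\frac1\kappa}.
\end{align*}
Finally I would replace $\nabla w$ by $\nabla u$: from the first estimate \eqref{compgradu} of Theorem~\ref{maininterior}, $(\fint_{Q_r}|\nabla(u-w)|^\kappa)^{1/\kappa}$ is bounded by a term of order $(|\mu|(Q_{\Sigma r})/r^{n-1})^{1/(p-1)}$ plus a term with the factor $(\fint_{Q_{\Sigma r}}|\nabla u|^\kappa)^{(2-p)/\kappa}$; multiplying by $r$ and using Young's inequality on the latter product (with exponents dictated by $2-p<1$) absorbs it into $r(\fint_{Q_{\Sigma r}}|\nabla u|^\kappa)^{1/\kappa}$ at the cost of the scale-invariant measure term $(|\mu|(Q_{\Sigma r})/r^{n-p})^{1/(p-1)}$ — which is exactly the first term on the right of the claimed inequality (note $r\cdot(|\mu|(Q_{\Sigma r})/r^{n-1})^{1/(p-1)} = (|\mu|(Q_{\Sigma r})/r^{n-p})^{1/(p-1)}$ after the trivial rescaling). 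Combining the triangle inequality $\inf_q\fint|u-q|^\epsilon \lesssim \fint|u-w|^\epsilon + \inf_q\fint|w-q|^\epsilon$ with all of the above yields the Corollary, after a final Jensen step to downgrade $(\fint|\nabla u|^\kappa)^{1/\kappa}$ to $(\fint|\nabla u|^\epsilon)^{1/\epsilon}$ when $\epsilon<\kappa$ (and reverse Hölder for $\nabla u$, which is only $L^\kappa$ a priori, when $\epsilon$ is larger — here one uses that $u$ solves \eqref{quasi-measure} only up to the $w$-comparison, so this last step again routes through $w$).

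The main obstacle I anticipate is the passage between different low exponents: Theorem~\ref{maininterior} is stated with the specific exponent $\kappa=(p-1)^2/2$, whereas the Corollary demands \emph{every} $\epsilon>0$ on \emph{both} sides simultaneously. For $\epsilon\le\kappa$ this is immediate, but for $\epsilon>\kappa$ one genuinely needs a self-improvement: the $L^\kappa$ control of $u-w$ must be boosted, and this is only legitimate because $w$ is a solution of a homogeneous equation enjoying reverse-Hölder/Gehring-type self-improvement and local Lipschitz bounds. Care is needed to keep all constants scale-invariant and to handle the borderline algebra of the Young-inequality absorption, since the exponent $2-p$ sits in $(1/2,1)$; but conceptually this is the standard comparison-plus-Poincaré-on-the-harmonic-replacement scheme, and no new analytic difficulty beyond Theorem~\ref{maininterior} arises.
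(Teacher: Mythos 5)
Your overall strategy --- decompose $u-q$ as $(u-w)+(w-q)$, use Poincar\'e for the $A$-harmonic replacement $w$ (available even at small exponents thanks to the reverse H\"older inequality for $\nabla w$), and then transfer back to $\nabla u$ via comparison --- is indeed the paper's route. But your opening step does not close: you propose to control $(\fint_{Q_r}|u-w|^\kappa)^{1/\kappa}$ via the \emph{second} estimate in Theorem~\ref{maininterior}, whose right-hand side involves $(\fint_{Q_{\Sigma r}}|u-\lambda|^\kappa)^{(2-p)/\kappa}$. After the Young step you describe this becomes $(\fint_{Q_{\Sigma r}}|u-\lambda|^\kappa)^{1/\kappa}$, which is essentially the quantity the Corollary sets out to bound --- just on a dilated cube --- so there is a near-circularity, and you do not sketch the additional iteration/absorption needed to close it. The paper avoids the issue by invoking Lemma~\ref{comsqrt3} directly: there the right-hand side already carries a (mixed) norm of $\nabla u$, so Young's inequality applied to the $(2-p)$-power absorbs cleanly into a multiple of $r\vertiii{\nabla u}$, and Lemma~\ref{revholderformixgrad} then converts that to $r(\fint|\nabla u|^\kappa)^{1/\kappa}$.

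The second gap is in the exponent passage. You claim that for $\epsilon>\kappa$ the reverse H\"older/$C^{1,\alpha}$ regularity of $w$ upgrades the $L^\kappa$ smallness of $u-w$ to $L^\epsilon$ smallness; but $u-w$ does \emph{not} solve the homogeneous equation, so regularity of $w$ gives no self-improvement for the difference, and the comparison estimate only controls $u-w$ in $L^\kappa$. Conversely, for $\epsilon<\kappa$ the $\kappa$-version of the inequality does not transfer by Jensen alone, because the right-hand side $r(\fint|\nabla u|^\epsilon)^{1/\epsilon}$ also shrinks; one needs a reverse H\"older estimate for $\nabla u$ itself (including the measure correction term) to pass back up to $L^\kappa$. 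In the paper both directions are handled at once by Remark~\ref{ktoe}, which upgrades Lemma~\ref{revholderformixgrad} to admit an arbitrary exponent $\epsilon>0$ on the right-hand side via a covering/iteration argument; it is that remark, not regularity of $w$, that legitimizes the reduction to $\epsilon=\kappa$.
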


		Second, we have a result on potential estimates for solutions of  \eqref{quasi-measure}. In particular, Theorem \ref{upw} below 
		 extends \cite[Theorems 1.1 and 1.10]{KM1} to all smaller values of $p$. For nonnegative measures $\mu$, \eqref{Wpointwiseu}  is a fundamental result due to Kilpel\"ainen and Mal\'y \cite{KM} (see also \cite{TW}). 
		  Hereafter, the (truncated) Havin-Maz'ya-Wolff's potential (often called Wolff's potential) ${\bf W}_{\gamma,p}^{R}(\nu)$, $R, \gamma>0$, of a nonnegative measure $\nu$ is defined by
		 $${\bf W}_{\gamma,p}^{R}(\nu)(x):= \int_{0}^R \left[\frac{\nu(B_t(x))}{t^{n-\gamma p}}\right]^{\frac{1}{p-1}}\frac{dt}{t}, \quad x\in\RR^n.$$
		On the other hand, the truncated Riesz's potential of order $\gamma>0$ is defined as
		 $${\bf I}^{R}_\gamma(\nu) := {\bf W}_{\gamma/2,2}^{R}(\nu)(x)= \int_{0}^R \frac{\nu(B_t(x))}{t^{n-\gamma }}\frac{dt}{t}, \quad x\in\RR^n.$$
		\begin{theorem}\label{upw}
			Suppose that $u\in C^0(\Omega)\cap W^{1,p}_{\rm loc}(\Om)$ solves \eqref{quasi-measure} for a finite measure $\mu$ in $\Omega$. Then under \eqref{condi1}--\eqref{condi2}, 
			for any $B_{R}(x_0)\subset \Om$	we have 
			\begin{equation}\label{Wpointwiseu}
			|u(x_0)| \lesssim{\bf W}_{1,p}^{R}(|\mu|)(x_0) + \Big(\fint_{B_{R}(x_0)}|u(z)|^{\kappa} dz\Big)^{\frac{1}{\kappa}}.
			\end{equation}
		\end{theorem}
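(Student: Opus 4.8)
The plan is to derive the pointwise Wolff potential bound \eqref{Wpointwiseu} from the comparison estimate of Theorem \ref{maininterior} via the standard iteration-on-dyadic-balls scheme of Kilpel\"ainen--Mal\'y, adapted so that the `correction' term $(\fint |\nabla u|^{\kappa})^{(2-p)/\kappa}$ appearing on the right-hand side of \eqref{compgradu} is absorbed. Fix $B_R(x_0)\subset\Om$ and for $j\ge 0$ set $r_j = R/2^j$, $B_j = B_{r_j}(x_0)$, and work with comparison cubes $Q_{r_j}(x_0)$ (switching between balls and cubes costs only dimensional constants). Let $w_j$ solve the homogeneous Dirichlet problem \eqref{eq1} on $Q_{c r_j}(x_0)$ with boundary data $u$, for a fixed small dilation factor. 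For homogeneous solutions $w_j$ one has, by De Giorgi--Nash--Moser / the standard interior regularity for $A$-harmonic functions under \eqref{condi1}--\eqref{condi2}, the oscillation-decay and sup bounds
$$\operatorname*{osc}_{B_{r_{j+1}}} w_j \lesssim \Big(\fint_{B_{r_j}}|w_j - (w_j)_{B_{r_j}}|^{\kappa}\Big)^{1/\kappa},\qquad \sup_{B_{r_{j+1}}}|w_j| \lesssim \Big(\fint_{B_{r_j}}|w_j|^{\kappa}\Big)^{1/\kappa}.$$
Combining these with the comparison estimate \eqref{compgradu} (in its $|u-w|$ form, dividing by $r$) and the triangle inequality produces, after writing $a_j := (\fint_{B_j}|u-(u)_{B_j}|^{\kappa})^{1/\kappa}$ (or a suitable truncated analogue centered at $u(x_0)$), a recursive inequality of the shape
$$a_{j+1} \le \theta\, a_j + C\Big[\tfrac{|\mu|(B_{cr_j})}{r_j^{n-1}}\Big]^{1/(p-1)} r_j + C\,\tfrac{|\mu|(B_{cr_j})}{r_j^{n-1}} r_j\Big(\fint_{B_{cr_j}}|\nabla u|^{\kappa}\Big)^{(2-p)/\kappa}$$
for some $\theta \in (0,1)$ coming from the oscillation decay.

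The main obstacle is controlling the last (nonlinear `correction') term, which involves $\fint |\nabla u|^{\kappa}$ rather than a quantity of the solution itself. To handle it I would invoke the gradient integrability/reverse-H\"older information that underlies Theorem \ref{maininterior} — namely that the mixed-norm quantity \eqref{mixednormquan}, and hence (by Lemma \ref{revholderformixgrad} and Corollary \ref{co234}) the quantity $(\fint_{B_{cr_j}}|\nabla u|^{\kappa})^{1/\kappa}$, is finite and controlled — together with a Caccioppoli-type bound $r_j(\fint_{B_{cr_j}}|\nabla u|^{\kappa})^{1/\kappa}\lesssim (\fint_{B_{2cr_j}}|u-(u)|^{\kappa})^{1/\kappa} + [\ldots \mu \ldots]$ to re-express that correction term in terms of the oscillation quantities $a_j$ and tails of the Wolff potential. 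Since $2-p < 1$ (indeed $2-p<1/2$ in our range $p>3/2$ is false — here $1<p<3/2$ so $2-p\in(1/2,1)$), the correction term is \emph{sublinear} in these oscillation quantities, so by Young's inequality it can be split as $\varepsilon\,a_j^{\text{(something)}} + C_\varepsilon(\text{Wolff tail})$; choosing $\varepsilon$ small keeps the effective multiplier in front of $a_j$ strictly below $1$. This is exactly where the singular range forces the use of the low exponent $\kappa=(p-1)^2/2$ and the sub-criticality $\kappa<\frac{n(p-1)}{n-1}$ noted after Theorem \ref{maininterior}.

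With the recursion $a_{j+1}\le \theta' a_j + C M_j$ in hand, where $\theta'<1$ and $M_j := [|\mu|(B_{cr_j})/r_j^{n-p}]^{1/(p-1)} + (\text{absorbed Wolff tail at scale } r_j)$, I would sum a geometric series: iterating gives $a_j \lesssim \theta'^{\,j} a_0 + \sum_{i\le j} \theta'^{\,j-i} M_i$, and since $\sum_i M_i \lesssim {\bf W}_{1,p}^{R}(|\mu|)(x_0)$ (this is precisely the discretization of the Wolff potential integral, because $[|\mu|(B_t(x_0))/t^{n-p}]^{1/(p-1)}$ integrated in $dt/t$ is comparable to $\sum_j [|\mu|(B_{r_j})/r_j^{n-p}]^{1/(p-1)}$) one obtains $a_j \lesssim {\bf W}_{1,p}^R(|\mu|)(x_0) + a_0$ uniformly in $j$. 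Finally, using the continuity $u\in C^0(\Om)$ to pass $j\to\infty$ and recover $u(x_0)$ as the limit of averages $(u)_{B_j}$, together with $a_0 = (\fint_{B_R}|u-(u)_{B_R}|^{\kappa})^{1/\kappa}\lesssim (\fint_{B_R}|u|^{\kappa})^{1/\kappa}$ and $|(u)_{B_R}|\le (\fint_{B_R}|u|^{\kappa})^{1/\kappa}$, yields \eqref{Wpointwiseu}. The continuity hypothesis is used only at this last step to identify the limit; the quantitative estimate itself is scale-invariant and depends on the data only through $n,p,\Lambda$.
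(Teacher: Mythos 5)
Your overall strategy (dyadic iteration, oscillation decay for $A$-harmonic functions, Young's inequality to absorb the sublinear correction, and summation into a Wolff potential) is the right one and matches the paper's framework, but you invoke the \emph{wrong half} of Theorem \ref{maininterior}, and this creates a genuine gap. You set up the recursion using the gradient comparison \eqref{compgradu}, whose correction term is $\frac{|\mu|(Q_{\Sigma r})}{r^{n-1}}\big(\fint_{Q_{\Sigma r}}|\nabla u|^{\kappa}\big)^{(2-p)/\kappa}$, and then propose to convert that term back into oscillations of $u$ via ``a Caccioppoli-type bound $r\big(\fint|\nabla u|^{\kappa}\big)^{1/\kappa}\lesssim \big(\fint|u-(u)|^{\kappa}\big)^{1/\kappa}+\cdots$.'' Such a Caccioppoli inequality with exponent $\kappa=(p-1)^2/2<1$ is \emph{not} standard and you neither prove it nor point to a lemma in the paper that supplies it; indeed inequalities of Caccioppoli/Poincar\'e type can fail in the sublinear range (the paper itself cites \cite{BK} for the failure of sublinear Poincar\'e and only obtains Corollary \ref{poicaresub} with a measure correction term after real work). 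So as written the absorption step does not close.

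The paper avoids this detour entirely: it uses the \emph{second} inequality of Theorem \ref{maininterior},
\begin{equation*}
\Big(\fint_{Q_r}|u-w|^{\kappa}\Big)^{1/\kappa}\lesssim \Big(\frac{|\mu|(Q_{\Sigma r})}{r^{n-p}}\Big)^{\frac{1}{p-1}}+\frac{|\mu|(Q_{\Sigma r})}{r^{n-p}}\Big(\fint_{Q_{\Sigma r}}|u-\lambda|^{\kappa}\Big)^{\frac{2-p}{\kappa}},
\end{equation*}
whose correction term is already an oscillation of $u$ (with a free $\lambda$), not a gradient average. Combined with the decay estimate \eqref{oscqk} for $w$ and the $\inf_q$-normalized quantity $\mathbf{J}(\rho,x_0)=\inf_q\big(\fint_{Q_\rho}|u-q|^{\kappa}\big)^{1/\kappa}$ (which is more robust than $\fint|u-(u)_{B_\rho}|^{\kappa}$ when $u\notin L^1_{\rm loc}$, even though here continuity saves you), one gets directly the recursion
\begin{equation*}
\mathbf{J}(\varepsilon r/2,x_0)\le C\varepsilon^{\alpha_0}\mathbf{J}(r,x_0)+C_\varepsilon\Big(\frac{|\mu|(Q_r)}{r^{n-p}}\Big)^{\frac{1}{p-1}}+C_\varepsilon\,\frac{|\mu|(Q_r)}{r^{n-p}}\,\mathbf{J}(r,x_0)^{2-p},
\end{equation*}
and Young's inequality in $2-p<1$ absorbs the last term into the first two without ever touching $\nabla u$. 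After that your summation of the Wolff tail and passage $j\to\infty$ (using $q_{\rho,x_0}(u)\to u(x_0)$, or continuity) is fine and coincides with the paper. To repair your argument, replace \eqref{compgradu} by the second estimate of Theorem \ref{maininterior} and drop the Caccioppoli/Corollary \ref{co234} step; with that change your proof becomes essentially the paper's.
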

		

		Third, we discuss potential estimates for gradients of  solutions to \eqref{quasi-measure}. Pointwise estimates for the  gradient 
		were  obtained in \cite{55DuzaMing,Duzamin2,KM2,KM3} for the case $p>2-1/n$.  For the case $\frac{3n-2}{2n-1}<p\leq 2-1/n$, see the recent work \cite{DZ} (see also \cite{HP}). The next theorem covers all of the remaining range of $p$.
		\begin{theorem}\label{gradpw} Assume that $A({x, \xi})$ satisfies the following Dini  condition:
			\begin{equation}\label{integralmodul}
			\int_0^1 \om(\rho) \frac{d\rho}{\rho} <+\infty
			\end{equation}
			for a non-decreasing function $\om:[0,1]\rightarrow [0, \infty)$ such that $\lim_{\rho\rightarrow 0^{+}} \om(\rho)=\om(0)=0$ and
			$$	|A(x,\xi)-A(y,\xi)| \leq \om(|x-y|) |\xi|^{p-1}, \qquad \forall x,y,\xi\in \RR^n, |x-y|\leq 1.$$
			Suppose that $u\in C^1(\Omega)$ solves \eqref{quasi-measure} for a finite measure $\mu$ in $\Omega$. Then 
			for any	ball $B_R(x_0)\subset\Omega$ we have 
			\begin{equation*}
			|\nabla u(x_0)| \lesssim \Big[ {\bf I}_{1}^{R}(|\mu|)(x_0)\Big]^{\frac{1}{p-1}} + \Big(\fint_{B_{R}(x_0)}|\nabla u(y)|^{\kappa} dy\Big)^{\frac{1}{\kappa}}.
			\end{equation*}
		\end{theorem}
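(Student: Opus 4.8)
\textbf{Proof proposal for Theorem \ref{gradpw}.} The plan is to follow the now-standard iteration scheme of Kuusi--Mingione for gradient potential estimates, but fed by the new comparison estimate of Theorem \ref{maininterior} in place of the classical one (Lemma \ref{com1}), which is exactly what allows us to cover the strongly singular range $1<p<3/2$. Fix $B_R(x_0)\subset\Om$ and, for a dyadic sequence of radii $r_j = \sigma^j R$ with $\sigma\in(0,1)$ to be chosen, set $Q_j = Q_{r_j}(x_0)$ (working with cubes as in Theorem \ref{maininterior}, or passing freely between cubes and balls). On each $Q_j$ let $w_j$ solve the homogeneous Dirichlet problem \eqref{eq1} with boundary data $u$ on $\partial Q_j$. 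The two inputs are: (i) the comparison bound \eqref{compgradu}, which controls $(\fint_{Q_j}|\nabla(u-w_j)|^\kappa)^{1/\kappa}$ by $[|\mu|(Q_{\Sigma r_j})/r_j^{n-1}]^{1/(p-1)}$ plus a term with the factor $(\fint|\nabla u|^\kappa)^{(2-p)/\kappa}$; and (ii) the interior regularity theory for the homogeneous equation ${\rm div}(A(x,\nabla w))=0$ under the Dini condition \eqref{integralmodul}, giving $\nabla w_j\in C^0$ with an excess-decay/oscillation estimate of the form $\mathrm{osc}_{Q_{\theta r_j}}\nabla w_j \lesssim \theta^\alpha \sup_{Q_{r_j}}|\nabla w_j| + (\text{error from }\om)$, together with the sup bound $\sup_{Q_{r_j/2}}|\nabla w_j|\lesssim (\fint_{Q_{r_j}}|\nabla w_j|^\kappa)^{1/\kappa}$.

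First I would set up the excess functionals. Define the averaged gradient $(\nabla u)_{Q_j} := \fint_{Q_j}\nabla u$ and the excess $E(Q_j) := (\fint_{Q_j}|\nabla u - (\nabla u)_{Q_j}|^\kappa)^{1/\kappa}$, plus the ``potential-sized'' quantity $\lambda_j := \sum_{i\ge j} [|\mu|(Q_{\Sigma r_i})/r_i^{n-1}]^{1/(p-1)}$, which is comparable to $[\mathbf{I}_1^{cR}(|\mu|)(x_0)]^{1/(p-1)}$ up to constants. Using the triangle inequality $|\nabla u - (\nabla w_j)_{Q_{j+1}}| \le |\nabla u - \nabla w_j| + |\nabla w_j - (\nabla w_j)_{Q_{j+1}}|$, averaging over $Q_{j+1}$, and invoking (i) on $Q_j$ and the oscillation estimate (ii) for $w_j$, I would derive a recursive inequality of the type
\begin{align*}
E(Q_{j+1}) \;\lesssim\; \theta^{\alpha} \, \big(\textstyle\fint_{Q_j}|\nabla u|^\kappa\big)^{1/\kappa} \;+\; \sigma^{-n/\kappa}\Big[\tfrac{|\mu|(Q_{\Sigma r_j})}{r_j^{n-1}}\Big]^{\frac{1}{p-1}} \;+\; \sigma^{-n/\kappa}\,\tfrac{|\mu|(Q_{\Sigma r_j})}{r_j^{n-1}}\big(\textstyle\fint_{Q_{\Sigma r_j}}|\nabla u|^\kappa\big)^{\frac{2-p}{\kappa}} \;+\; (\text{Dini error}),
\end{align*}
where $\theta=\sigma$ and the Dini error is $\int_0^{r_j}\om(\rho)\,d\rho/\rho$ (or a discrete analogue). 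The key structural point is that $\kappa=(p-1)^2/2 < \frac{n(p-1)}{n-1}$ for $1<p<3/2$, so all the averaged $\kappa$-norms of $\nabla u$ appearing above are subcritical and hence finite; this is precisely the gain furnished by Theorem \ref{maininterior} over \cite[Lemma 2.1]{HP2}.

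Next I would run the iteration. Introduce $A_j := (\fint_{Q_j}|\nabla u|^\kappa)^{1/\kappa} \le E(Q_j) + |(\nabla u)_{Q_j}|$ and track both $E(Q_j)$ and the averages $|(\nabla u)_{Q_j}|$; the telescoping $|(\nabla u)_{Q_{j+1}}| \le |(\nabla u)_{Q_j}| + \sigma^{-n/\kappa} E(Q_j)$ (plus the comparison error) closes the system. Choosing $\sigma$ small enough that $\sigma^\alpha \cdot(\text{absolute constant}) \le 1/4$ makes the leading term on the right a genuine contraction \emph{after} one absorbs a bootstrap assumption $A_j \lesssim M$ on a maximal chain of indices, where $M := [\mathbf{I}_1^R(|\mu|)(x_0)]^{1/(p-1)} + (\fint_{B_R}|\nabla u|^\kappa)^{1/\kappa}$; the Riccati-type term with factor $(\fint|\nabla u|^\kappa)^{(2-p)/\kappa}$ is handled by Young's inequality with exponents $\frac{1}{2-p}$ and $\frac{1}{p-1}$, splitting it into an $\frac{\epsilon}{4}A_j$ piece (absorbed) and a $C_\epsilon [|\mu|(Q_{\Sigma r_j})/r_j^{n-1}]^{1/(p-1)}$ piece (summable into $\lambda_j$). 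Summing the resulting geometric recursion over $j$, using the Dini condition \eqref{integralmodul} to conclude that $\sum_j \int_0^{r_j}\om(\rho)d\rho/\rho < \infty$ with sum controlled by $\int_0^R\om(\rho)d\rho/\rho$, and monotonicity of $t\mapsto |\mu|(B_t)$ to pass from the discrete dyadic sum $\lambda_0$ back to the continuous integral $\mathbf{I}_1^R(|\mu|)(x_0)$, yields $\sup_j A_j \lesssim M$ and, crucially, that $\{(\nabla u)_{Q_j}\}_j$ is a Cauchy sequence. Since $u\in C^1$, its limit is $\nabla u(x_0)$, and the accumulated bound gives $|\nabla u(x_0)| \lesssim [\mathbf{I}_1^R(|\mu|)(x_0)]^{1/(p-1)} + (\fint_{B_R(x_0)}|\nabla u|^\kappa)^{1/\kappa}$, as claimed. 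The main obstacle is the bookkeeping in the iteration: one must carefully set up the contraction so that the Riccati term with the $(2-p)$ power is absorbed rather than destroying the scheme, and verify that every ``error'' term sums to something controlled by $\mathbf{I}_1^R(|\mu|)$ and the Dini modulus — this is where the subcriticality $\kappa<\frac{n(p-1)}{n-1}$ and the exponent split via Young's inequality do the real work.
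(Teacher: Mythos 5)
There is a genuine gap in your iteration scheme: you track the arithmetic averages $(\nabla u)_{Q_j}=\fint_{Q_j}\nabla u$ and claim the telescoping bound $|(\nabla u)_{Q_{j+1}}|\le |(\nabla u)_{Q_j}|+\sigma^{-n/\kappa}E(Q_j)$, but this inequality is false when $\kappa<1$. To obtain it you would need
\[
\fint_{Q_{j+1}}|\nabla u-(\nabla u)_{Q_j}|\,dy \;\lesssim\; \Big(\fint_{Q_{j}}|\nabla u-(\nabla u)_{Q_j}|^\kappa\,dy\Big)^{1/\kappa},
\]
and Jensen goes in the opposite direction for $\kappa<1$ (an $L^1$ average dominates the $L^\kappa$ quasi-average, not the other way around). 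More structurally, in the strongly singular range the only quantitative information the comparison estimate (Theorem \ref{maininterior}) provides about $\nabla u$ is in terms of $L^\kappa$ quasi-norms with $\kappa=(p-1)^2/2\ll 1$, so any step that converts this into genuine $L^1$ averages of $\nabla u$ or its oscillation is doomed.

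This is precisely the modification the paper flags and attributes to \cite{DZ} and \cite{HP}: replace the mean oscillation $\fint_{B_\rho}|\nabla u-(\nabla u)_{B_\rho}|$ throughout by
\[
\mathbf{J}(\rho):=\inf_{\mathbf{q}\in\RR^n}\Big(\fint_{B_\rho}|\nabla u-\mathbf{q}|^\kappa\,dy\Big)^{1/\kappa},
\]
and track the quasi-minimizing vectors $\mathbf{q}_\rho$ achieving the infimum instead of the arithmetic means. The crucial point is that constants are easy in $L^\kappa$: for $B_{\rho'}\subset B_\rho$ one has $|\mathbf{q}_{\rho'}-\mathbf{q}_\rho|^\kappa\le \fint_{B_{\rho'}}|\mathbf{q}_{\rho'}-\nabla u|^\kappa + \fint_{B_{\rho'}}|\nabla u-\mathbf{q}_\rho|^\kappa\lesssim \mathbf{J}(\rho')^\kappa+(\rho/\rho')^n\mathbf{J}(\rho)^\kappa$, so the replacement telescoping inequality $|\mathbf{q}_{\rho'}-\mathbf{q}_\rho|\lesssim \mathbf{J}(\rho')+\mathbf{J}(\rho)$ holds using only the quasi-triangle inequality in $L^\kappa$; and $\mathbf{q}_\rho\to\nabla u(x_0)$ a.e.\ as $\rho\to 0$ by the Lebesgue-type result of \cite{DS}. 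With these substitutions, the rest of your outline — the excess-decay estimate for $\nabla w_j$ under the Dini condition, the absorption of the $(2-p)$-power Riccati term via Young's inequality, and the resummation of the dyadic tail into $\mathbf{I}_1^R(|\mu|)(x_0)$ — does go through and matches the scheme of \cite{Duzamin2,KM1} that the paper invokes (see also the analogous argument for $u$ itself in the proof of Theorem \ref{upw}, where the same $\inf_q$ device is used). So the idea is right, but the use of $(\nabla u)_{Q_j}$ must be dropped in favor of the $\inf$-based excess.
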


		The proof of Theorem \ref{gradpw} is based mainly on Theorem \ref{maininterior} and the pioneering idea of \cite{Duzamin2} (see also \cite{KM1}). However, some modifications are needed due to the fact that the gradient of solution may not belong to $L^1_{{\rm loc}}(\Om)$.  Such modifications have been carried out in \cite{DZ} for $\frac{3n-2}{2n-1}<p\leq 2-1/n$ (see also \cite{HP}). 	The main point here  is to replace the mean oscillations such as 
		$$\fint_{B_\rho} |\nabla u- (\nabla u)_{B_\rho}| dy, \qquad (\nabla u)_{B_\rho}:=\fint_{B_\rho} \nabla u dz,$$ 
		with the quantity 
		$$\left(\inf_{\mathbf{q}\in\RR^n} \fint_{B_\rho}|\nabla u- \mathbf{q}|^\kappa dy\right)^{\frac{1}{\kappa}}.$$
		
		Thus we shall omit the details of the proof of Theorem \ref{gradpw}. 
%

		Fourth, we discuss some global gradient estimates for {\it renormalized solutions} to the Dirichlet problem
		\begin{eqnarray}\label{Di}
		\left\{ \begin{array}{rcl}
		-{\rm div}(A(x, \nabla u))&=&  \mu \quad \text{in} ~\Omega, \\
		u&=&0  \quad \text{on}~ \partial \Omega.
		\end{array}\right.
		\end{eqnarray}
		
		We refer to the paper \cite{11DMOP} for  several equivalent definitions of renormalized solutions along with their stability property. Here we mention that truncations of a renormalized solution $u$ of \eqref{Di} are stable even near the boundary in the sense that if $u_k=T_k(u)$, $k>0$, where 
		\begin{equation}\label{Trun}
		T_k(s)=\max\{\min\{s,k\},-k\}, \qquad s\in\mathbb{R},
		\end{equation}
		then $u_k\in W^{1,p}_0(\Om)$ is the unique solution of \eqref{Di} with  measure datum $\mu_k$ in place of $\mu$ such that 
		$\mu_k$ converges in the narrow topology of measures to $\mu$, i.e., 
		$$\lim_{k\rightarrow\infty}\int_{\Omega}\varphi \, d\mu_{k}=\int_{\Omega}\varphi \,
		d\mu,$$
		for every {\it bounded and continuous} function $\varphi$ on $\Omega$.

		Pointwise gradient estimates  up to the boundary for renormalized solutions of  \eqref{Di} can be deduced from \cite{DZ} in the case $p\in (\frac{3n-2}{2n-1}, 2)$.
		Here  $A(x,\xi)$ is assumed to have the special structure $A(x,\xi)=a(x)|\xi|^{p-2}\xi$ for any $x$ near  $\partial\Om$, and the boundary 
		$\partial\Om$ is assumed to be of class $C^{1, \, {\rm Dini}}$ (see \cite{DZ}). 	 With Theorem \ref{maininterior} and its boundary counterpart (Theorem \ref{mainboundary}) at hand,  the method of    \cite{DZ} also applies 
		to the case $1<p<3/2$. For that reason, we shall not present the details of the proof.
		
		\begin{theorem}\label{globalpw} Assume that $A({x, \xi})$ satisfies the  Dini  condition \eqref{integralmodul}, and that $\partial \Om$ is of class $C^{1,\alpha}$ 
		or even $C^{1, \, {\rm Dini}}$. 	
		Assume also that 	$A(x,\xi)=a(x)|\xi|^{p-2}\xi$ for any $x$ near  $\partial\Om$. Then for any renormalized solution $u$ of  \eqref{Di}
		we have  
			\begin{equation}\label{Wpointwiseu"'}
			|\nabla u(x)| \leq C_0\Big[ {\bf I}_{1}^{2 {\rm diam}(\Om)}(|\mu|)(x)\Big]^{\frac{1}{p-1}} \qquad {\rm a.e.~} x\in\Om. 
			\end{equation}
		\end{theorem}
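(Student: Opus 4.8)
\textbf{Proof proposal for Theorem \ref{globalpw}.}

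The plan is to reduce the global bound \eqref{Wpointwiseu"'} to the pointwise interior estimate of Theorem \ref{gradpw} away from $\partial\Om$, and to a boundary analogue near $\partial\Om$, then to remove the extra averaged term $(\fint_{B_R}|\nabla u|^\kappa)^{1/\kappa}$ by iterating the comparison estimate down to scale zero using a global a priori $L^\kappa$-bound on $\nabla u$ coming from the special structure $A(x,\xi)=a(x)|\xi|^{p-2}\xi$ near $\partial\Om$ together with the $C^{1,\mathrm{Dini}}$ regularity of $\partial\Om$. First I would fix $x\in\Om$ and distinguish two regimes according to whether $d(x):=\mathrm{dist}(x,\partial\Om)$ is comparable to $\mathrm{diam}(\Om)$ or small. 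In the interior regime $B_{d(x)/2}(x)\subset\Om$, Theorem \ref{gradpw} applies on this ball and yields $|\nabla u(x)|\lesssim [\mathbf{I}_1^{d(x)/2}(|\mu|)(x)]^{1/(p-1)}+(\fint_{B_{d(x)/2}(x)}|\nabla u|^\kappa)^{1/\kappa}$; the first term is dominated by $[\mathbf{I}_1^{2\mathrm{diam}(\Om)}(|\mu|)(x)]^{1/(p-1)}$, so the whole game is to absorb the averaged term.

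Near the boundary I would invoke the boundary counterpart of Theorem \ref{maininterior} (Theorem \ref{mainboundary}, referenced in the excerpt) in place of the interior comparison estimate, comparing $u$ on a half-cube $Q_r(x_0)\cap\Om$ with $x_0\in\partial\Om$ to the solution $w$ of the homogeneous problem vanishing on $\partial\Om\cap Q_r$. Under the structure hypothesis $A(x,\xi)=a(x)|\xi|^{p-2}\xi$ and the $C^{1,\mathrm{Dini}}$ (or $C^{1,\alpha}$) flatness of $\partial\Om$, one has boundary regularity for $w$: after a $C^{1,\mathrm{Dini}}$ flattening diffeomorphism the half-cube problem reduces to a homogeneous $p$-Laplace-type equation with $\mathrm{Dini}$-continuous coefficients on a flat half-cube with zero Dirichlet data on the flat part, for which $\nabla w$ is continuous up to the boundary with a modulus controlled by $\om$ and by $(\fint|\nabla w|^\kappa)^{1/\kappa}$. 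This is exactly the ingredient that \cite{DZ} supplies for $p\in(\tfrac{3n-2}{2n-1},2)$, and, as the excerpt notes, with Theorems \ref{maininterior}–\ref{mainboundary} in hand the same excess-decay / Campanato-type iteration runs verbatim for $1<p<3/2$: summing the geometric-type series of oscillation contributions produces $|\nabla u(x)|\lesssim[\mathbf{I}_1^{2\mathrm{diam}(\Om)}(|\mu|)(x)]^{1/(p-1)}+(\fint_{B_{R_0}(x)\cap\Om}|\nabla u|^\kappa)^{1/\kappa}$ for a fixed $R_0\sim\mathrm{diam}(\Om)$ and all $x$ near $\partial\Om$ as well.

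To finish, I would kill the averaged term globally. Choosing $R_0\geq 2\mathrm{diam}(\Om)$ so that $B_{R_0}(x)\supset\Om$, the averaged term is bounded by $|\Om|^{-1/\kappa}\|\nabla u\|_{L^\kappa(\Om)}$, a fixed finite quantity by the standard a priori estimates for renormalized solutions (recall $\kappa=(p-1)^2/2<\tfrac{n(p-1)}{n-1}$, which is below the Marcinkiewicz exponent of $\nabla u$). But a fixed constant is not yet the desired \eqref{Wpointwiseu"'}: I would remove it by a rescaling / homogeneity argument. Replacing $\mu$ by $t\mu$ scales the renormalized solution to $u_t$ with $\nabla u_t$ essentially $t^{1/(p-1)}\nabla u$ (for the model part) and $\mathbf{I}_1(|t\mu|)=t\,\mathbf{I}_1(|\mu|)$, so the inequality $|\nabla u_t(x)|\leq C[\mathbf{I}_1^{2\mathrm{diam}(\Om)}(|t\mu|)(x)]^{1/(p-1)}+C(t)$ with $C(t)\to0$ as $t\to0$, applied at an appropriate scale and then rescaled back, forces the additive constant to disappear and yields \eqref{Wpointwiseu"'} with a constant $C_0$ depending only on $n,p,\Lambda,\om,\Om$. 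Equivalently, one can argue by contradiction/compactness: if the constant could not be dropped, a blow-up at the worst point would produce a nontrivial global $A$-harmonic function on a half-space with zero boundary data whose gradient at the base point is bounded below, contradicting the Liouville-type rigidity for such homogeneous problems. The main obstacle is the second step: establishing the boundary comparison and boundary $C^1$-regularity for $w$ in the singular range $1<p<3/2$ with only $\mathrm{Dini}$ data — this is where the new comparison estimate \eqref{compgradu} (in its boundary form) is genuinely needed, since the classical gradient comparison of Lemma \ref{com1} is unavailable here; once that is in place, the iteration and the removal of the constant are routine adaptations of \cite{Duzamin2,KM1,DZ}.
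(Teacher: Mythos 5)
Your overall strategy --- interior iteration via Theorem \ref{gradpw}, boundary iteration via Theorem \ref{mainboundary} and a $C^{1,\mathrm{Dini}}$ excess-decay for the reference solution, then remove the leftover $L^\kappa$-average --- is exactly the outline that the paper has in mind when it says ``the method of \cite{DZ} also applies.'' However, your final step, the removal of the additive averaged term, contains a genuine gap, and neither of the two devices you propose actually closes it.

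The rescaling argument is a no-op. For a $(p-1)$-homogeneous operator, replacing $\mu$ by $t\mu$ replaces $u$ by $u_t$ with $\nabla u_t = t^{1/(p-1)}\nabla u$. But the ``fixed constant'' $C(t)$ that you want to drive to zero is $\bigl(\fint_\Omega |\nabla u_t|^\kappa\bigr)^{1/\kappa}$, and this scales the \emph{same} way, namely $C(t)=t^{1/(p-1)}\bigl(\fint_\Omega |\nabla u|^\kappa\bigr)^{1/\kappa}$. Dividing the rescaled inequality
\[
t^{1/(p-1)}|\nabla u(x)|\le C\,t^{1/(p-1)}\bigl[\mathbf{I}_1^{2\,\mathrm{diam}\,\Omega}(|\mu|)(x)\bigr]^{\frac{1}{p-1}}+ C\,t^{1/(p-1)}\Bigl(\fint_\Omega|\nabla u|^\kappa\Bigr)^{1/\kappa}
\]
by $t^{1/(p-1)}$ returns you exactly to the inequality you started from, so nothing is absorbed. (Moreover, for a general $A$ satisfying \eqref{condi1}--\eqref{condi2} the operator is not $(p-1)$-homogeneous, so $u_t$ need not even solve the scaled equation.) The compactness/blow-up alternative you sketch has the same problem: the additive term and the Riesz potential scale identically under the natural blow-up, so the would-be Liouville contradiction never materializes without extra information.

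What actually kills the averaged term is already contained in Theorem \ref{mainboundary} applied once at the top scale. Pick $x_0\in\partial\Omega$ and $R_0\ge 2\,\mathrm{diam}(\Omega)$ so that $Q_{R_0}(x_0)\cap\Omega=\Omega$. Then the reference solution $w$ of \eqref{eq2} solves the homogeneous Dirichlet problem on $\Omega$ with boundary datum $u|_{\partial\Omega}=0$ (recall $u\in W^{1,p}_0(\Omega)$ after extension by zero), so by uniqueness $w\equiv 0$. Theorem \ref{mainboundary} then reads
\[
\Bigl(\fint_{Q_{R_0}(x_0)}|\nabla u|^{\kappa}\Bigr)^{\frac{1}{\kappa}}\ \lesssim\ \Bigl(\frac{|\mu|(\Omega)}{R_0^{\,n-1}}\Bigr)^{\frac{1}{p-1}} + \frac{|\mu|(\Omega)}{R_0^{\,n-1}}\Bigl(\fint_{Q_{\Sigma R_0}(x_0)}|\nabla u|^{\kappa}\Bigr)^{\frac{2-p}{\kappa}},
\]
and since $0<2-p<1$, Young's inequality absorbs the last term and yields $\bigl(\fint_{Q_{R_0}}|\nabla u|^\kappa\bigr)^{1/\kappa}\lesssim\bigl(|\mu|(\Omega)/R_0^{\,n-1}\bigr)^{1/(p-1)}$. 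Finally, for every $x\in\Omega$ and $t\in[\mathrm{diam}(\Omega),2\,\mathrm{diam}(\Omega)]$ one has $B_t(x)\supset\Omega$, hence
\[
\mathbf{I}_1^{2\,\mathrm{diam}\,\Omega}(|\mu|)(x)\ \ge\ \int_{\mathrm{diam}\,\Omega}^{2\,\mathrm{diam}\,\Omega}\frac{|\mu|(\Omega)}{t^{\,n-1}}\,\frac{dt}{t}\ \gtrsim\ \frac{|\mu|(\Omega)}{(\mathrm{diam}\,\Omega)^{\,n-1}},
\]
so the top-scale average is dominated by $\bigl[\mathbf{I}_1^{2\,\mathrm{diam}\,\Omega}(|\mu|)(x)\bigr]^{1/(p-1)}$ uniformly in $x$. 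This, rather than a rescaling or compactness argument, is the step that turns the interior/boundary iteration into the clean global bound \eqref{Wpointwiseu"'}.
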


		On the other hand, in many applications it is enough 	to use a weighted integral estimate of Muckenhoupt-Wheeden type  for the gradient, which we shall describe next. For such weighted integral bounds, we can
		work with equation \eqref{Di}  under a much weaker
		condition on the coefficients and the domain. 
		
		As for the coefficients, we recall the following definition from \cite{Ph2}.
			\begin{definition} Given  $\delta, R_0>0$, we say that $A({x, \xi})$ satisfies the $(\delta, R_0)$-BMO condition if
			\begin{equation*}
			\sup_{y\in\RR^n, \, 0<r\leq R_0 }  \fint_{B_{r}(y)}\Upsilon(A,
			B_{r}(y))(x) dx  \leq \delta,
			\end{equation*}
			where  
$$\Upsilon(A,B_r(y))(x) := \sup_{\xi \in \mathbb{R}^{n}\setminus \{0\}} \frac{|A({x, \xi}) - \fint_{B_r(y)}A(z, \xi)dz|}{|\xi|^{p-1}}.$$
		\end{definition}

		As for the domain, we shall use the notion of Reifenberg flat domain (see \cite{55Re}). This includes  $C^1$ and  Lipschitz domains (with sufficiently small Lipschitz constants), as well as certain fractal domains.
				 
		\begin{definition}
			Given $\delta\in (0, 1)$ and $R_0>0$, we say that $\Omega$ is a $(\delta, R_0)$-Reifenberg flat domain if for every $x\in \partial \Omega$
			and every $r\in (0, R_0]$, there exists a
			system of coordinates $\{ z_{1}, z_{2}, \dots,z_{n}\}$,
			which may depend on $r$ and $x$, so that  in this coordinate system $x=0$ and that
			\[
			B_{r}(0)\cap \{z_{n}> \delta r \} \subset B_{r}(0)\cap \Omega \subset B_{r}(0)\cap \{z_{n} > -\delta r\}.
			\]
		\end{definition}
		
		Our weighted integral estimate is obtained for the class of $\mathbf{A}_\infty$ weights. This class consists of nonnegative  functions $w\in L^1_{\text{loc}}(\mathbb{R}^{n})$ such that there are two positive constants $C$ and $c$ such that
		$$w(E)\le C \left(\frac{|E|}{|B|}\right)^{c} w(B),
		$$
		for all balls $B$ and all measurable subsets $E$ of $B$. The pair $(C,c) $ is called the $\mathbf{A}_\infty$ constants of $w$ and is denoted by $[w]_{\mathbf{A}_\infty}$.

The following theorem extends the results of \cite{Ph2, HPanna} (see also \cite{HP2}) to the remaining   case $1<p\leq\frac{3n-2}{2n-1}$. 

		\begin{theorem} \label{101120143}   For any $w\in \mathbf{A}_{\infty}$ and $0< q<\infty$, there exists    $\delta=\delta(n,p,\Lambda, q, [w]_{\mathbf{A}_{\infty}})\in (0,1)$ such that if $A(x,\xi)$ is $(\delta, R_0)$-BMO and $\Omega$ is  $(\delta,R_0)$-Reifenberg flat  for some $R_0>0$, then  for any renormalized solution $u$ of \eqref{Di}, we have                            
			\begin{equation}\label{wLq}
			\|\nabla u\|_{L^{q}_w(\Omega)}\leq C \|[\mathbf{M}_1(\mu)]^{\frac{1}{p-1}}\|_{L^{q}_w(\Omega)}.
			\end{equation} 
			Here the constant $C=C(n,p,\Lambda,q, [w]_{\mathbf{A}_\infty}, diam(\Omega)/R_0)$, and        $\mathbf{M}_1(\mu)$   is a fractional maximal function $\mu$ defined by
			$$\mathbf{M}_1(\mu)(x):= \sup_{r>0} \frac{|\mu|(B_t(x))}{t^{n-1}}, \qquad x\in\RR^n.$$     
		\end{theorem}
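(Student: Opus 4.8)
The plan is to deduce the weighted estimate \eqref{wLq} from the pointwise comparison bound of Theorem \ref{maininterior} together with its boundary counterpart (Theorem \ref{mainboundary}), following the now-standard "good-$\lambda$"/maximal-function machinery used in \cite{Ph2, HPanna, HP2}, but with all mean oscillations measured in the sub-critical exponent $\kappa = (p-1)^2/2$ rather than in $L^1$. First I would fix a renormalized solution $u$ of \eqref{Di} and work with its truncations $u_k = T_k(u)$, which by the discussion preceding the theorem solve \eqref{Di} with data $\mu_k$ converging narrowly to $\mu$ and satisfy $|\mu_k|(\Om) \le C|\mu|(\Om)$; by a limiting argument it suffices to prove \eqref{wLq} for each $u_k$ with a constant independent of $k$, so from now on we may assume $\nabla u \in L^p(\Om)$. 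Extend $\mu$ by zero outside $\Om$ and extend $u$ by zero as well (legitimate since $u\in W^{1,p}_0(\Om)$), so that interior and boundary cubes can be treated uniformly.

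The core of the argument is a level-set (good-$\lambda$) inequality for the fractional maximal function $\mathbf{M}(\nabla u)$ defined with exponent $\kappa$, i.e. $\mathbf{M}_\kappa^{\#}(\nabla u)(x) := \sup_{r} \big(\fint_{B_r(x)} |\nabla u|^\kappa\big)^{1/\kappa}$ suitably localized. For a cube $Q = Q_r(x_0)$ (either interior, $Q_{\Sigma r} \Subset \Om$, or touching the boundary) I would compare $u$ on $Q$ to the solution $w$ of the homogeneous equation \eqref{eq1} (resp. its boundary analogue). Theorem \ref{maininterior} gives
\[
\left(\fint_{Q_r} |\nabla(u-w)|^\kappa\right)^{1/\kappa} \lesssim \left(\frac{|\mu|(Q_{\Sigma r})}{r^{n-1}}\right)^{\frac{1}{p-1}} + \frac{|\mu|(Q_{\Sigma r})}{r^{n-1}}\left(\fint_{Q_{\Sigma r}} |\nabla u|^\kappa\right)^{\frac{2-p}{\kappa}},
\]
while for the homogeneous solution $w$ the $(\delta, R_0)$-BMO condition on $A$ and the $(\delta, R_0)$-Reifenberg flatness of $\Om$ yield, via the known interior and boundary comparison/regularity estimates for $A$-harmonic functions (e.g. \cite{Ph2}), that $\nabla w$ enjoys a small-BMO-type oscillation bound: $\big(\fint_{Q_r} |\nabla w - (\nabla w)_{Q_r}|^\kappa\big)^{1/\kappa}$ is controlled by $\delta^\sigma \sup_{Q_{\Sigma r}} (\fint |\nabla u|^\kappa)^{1/\kappa}$ for some $\sigma > 0$. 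Combining the two and bookkeeping the power $(2-p)/\kappa < 1$ (here is where sub-criticality of $\kappa$, namely $\kappa < \tfrac{n(p-1)}{n-1}$, is essential, so that $\fint |\nabla u|^\kappa$ is a legitimate a priori finite quantity and Young's inequality can absorb the nonlinear term) produces the density-of-level-sets estimate: for $\lambda$ large,
\[
\big|\{\mathbf{M}_\kappa^{\#}(\nabla u) > A_0\lambda,\ [\mathbf{M}_1(\mu)]^{1/(p-1)} \le \varepsilon_0 \lambda\} \cap Q\big| \le \varepsilon(\delta, \varepsilon_0)\, |Q|
\]
whenever $Q$ is a Calderón–Zygmund cube at that level, with $\varepsilon(\delta,\varepsilon_0) \to 0$ as $\delta, \varepsilon_0 \to 0$.

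From this density estimate the weighted bound follows by the by-now routine Calderón–Zygmund covering argument adapted to $\mathbf{A}_\infty$ weights: the $\mathbf{A}_\infty$ condition $w(E) \le C(|E|/|B|)^c w(B)$ converts the smallness of the Lebesgue density $\varepsilon$ into smallness $C\varepsilon^c$ of the weighted density, which iterated over dyadic levels gives
\[
w\big(\{\mathbf{M}_\kappa^{\#}(\nabla u) > \lambda\}\big) \lesssim \sum_{j} (C\varepsilon^c)^j\, w\big(\{[\mathbf{M}_1(\mu)]^{1/(p-1)} > c\lambda\}\big) + (\text{tail}),
\]
and summing the resulting $L^q_w$ quasi-norms over $\lambda$ (choosing $\delta = \delta(n,p,\Lambda,q,[w]_{\mathbf{A}_\infty})$ small enough that $C\varepsilon^c < 2^{-q}$, say) yields $\|\mathbf{M}_\kappa^{\#}(\nabla u)\|_{L^q_w} \lesssim \|[\mathbf{M}_1(\mu)]^{1/(p-1)}\|_{L^q_w}$. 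Since $|\nabla u| \le \mathbf{M}_\kappa^{\#}(\nabla u)$ a.e. by Lebesgue differentiation, \eqref{wLq} follows, with the dependence of $C$ on $\mathrm{diam}(\Om)/R_0$ entering only through the finitely many scales one must descend from $R_0$ down to the relevant cube sizes. I expect the main obstacle to be the boundary analysis: one must verify that the boundary comparison estimate (Theorem \ref{mainboundary}) combines with the Reifenberg-flatness-based regularity of $A$-harmonic functions to give the oscillation decay for $\nabla w$ near $\partial\Om$ uniformly in the flatness parameter $\delta$, and that the extension-by-zero of $u$ and $\mu$ interacts correctly with the covering argument near $\partial\Om$ — this is exactly the point where the singular range $1<p<3/2$ previously blocked the theory, and it is unlocked here only because Theorem \ref{maininterior}/\ref{mainboundary} now supply a comparison bound with the sub-critical exponent $\kappa$.
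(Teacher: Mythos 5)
Your proposal takes exactly the route the paper intends: the paper defers the proof to the good-$\lambda$ / $\mathbf{A}_\infty$ machinery of \cite{Ph2, HPanna}, now applied with the new interior and boundary comparison estimates of Theorems \ref{maininterior} and \ref{mainboundary} and the level-set pairing of $[\mathbf{M}_1(\mu)]^{1/(p-1)}$ against $\mathbf{M}(|\nabla u|^\kappa)^{1/\kappa}$, which is precisely what you outline. One small slip worth correcting: the exponent that Young's inequality must dominate in the comparison bound is $2-p<1$, the power to which $(\fint|\nabla u|^\kappa)^{1/\kappa}$ is raised, not $(2-p)/\kappa$, which is in fact greater than $1$ throughout $1<p<3/2$; the sub-criticality $\kappa<\tfrac{n(p-1)}{n-1}$ that you also invoke is a correct but separate point ensuring the relevant averages are finite.
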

	
	We remark that the weighted bound \eqref{wLq} also holds if we replace the weighted space $L^q_w(\Om)$ with the weighted Lorentz space $L^{q,s}_w(\Om)$ for any 
	$0<s\leq \infty$.
		
	A proof of 	Theorem \ref{101120143} in the case $p>2-1/n$ and the case $\frac{3n-2}{2n-1}<p\leq 2-1/n$ was presented in \cite{Ph2} and \cite{HPanna}, respectively.
	Now with the availability of the interior and boundary comparison estimates (Theorems \ref{maininterior} and \ref{mainboundary}), the proof also works in the present case $1<p<3/2$.  For that, we shall not repeat the proof here.
	We mention that, following the approach of \cite{HPanna}, a `good-$\lambda$' type inequality  involving  
	$\mathbf{M}_1(\mu)^{1/(p-1)}$ and ${\bf M}(|\nabla u|^\kappa)^{1/\kappa}$, where ${\bf M}$ is the Hardy-Littlewood maximal function,	can also be obtained as in \cite[Theorem 1.5]{HPanna}.

	Finally,  as an application of Theorem \ref{globalpw} we obtain a sharp existence result for a quasilinear Riccati type equation with measure data:																																					
		\begin{eqnarray}\label{Riccati}
		\left\{ \begin{array}{rcl}
		-{\rm div}(A(x, \nabla u))&=& |\nabla u|^q + \mu \quad \text{in} ~\Omega, \\
		u&=&0  \quad ~~~~~~~~~~~\text{on}~ \partial \Omega.
		\end{array}\right.
		\end{eqnarray}
		To this end, we shall need the notion of capacity
		associated to the Sobolev space $W^{1, s}(\RR^n)$,  $1<s<+\infty$, defined for each
		 compact set $K\subset\RR^n$ by
		\begin{equation*}
		{\rm Cap}_{1,  s}(K)=\inf\Big\{\int_{\RR^n}(|\nabla \varphi|^s +\varphi^s) dx: 0\leq \varphi\in C^\infty_0(\RR^n),
		\varphi\geq 1 {\rm ~on~} K \Big\}.
		\end{equation*}

		\begin{theorem}\label{main-Ric}
			Let $q> p-1$. Assume that \eqref{integralmodul} holds and $\partial\Om$ is of class $C^{1, \, {\rm Dini}}$. Also, assume that  $A(x,\xi)=a(x)|\xi|^{p-2}\xi$ for any $x$ near the boundary $\partial\Om$. 
			Then there exists a constant 
			$c_0>0$ such that if the measure $\mu$ satisfies 
			\begin{equation}\label{capcondi} 
			|\mu|(K) \leq c_0\, {\rm Cap}_{1,\, \frac{q}{q-p+1}}(K)
			\end{equation}
			for all compact sets $K\subset\Om$, then there exists a renormalized solution $u$ to the Riccati type equation \eqref{Riccati} such that 
			\begin{equation*}
			|\nabla u(x)| \lesssim \Big[ {\bf I}_{1}^{2{\rm diam}(\Om)}(|\mu|)(x)\Big]^{\frac{1}{p-1}}\qquad {\rm a.e.~} x\in\Om.
			\end{equation*}
		\end{theorem}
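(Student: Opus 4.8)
The plan is to prove Theorem \ref{main-Ric} by a fixed point/iteration argument built on the a priori pointwise gradient bound \eqref{Wpointwiseu"'} from Theorem \ref{globalpw}. First, I would set up the natural approximation scheme: for the nonlinear term $|\nabla u|^q$ we freeze it, i.e. we consider the sequence of linear-datum problems $-\operatorname{div}(A(x,\nabla u_{j+1})) = |\nabla u_j|^q + \mu$ in $\Omega$, $u_{j+1}=0$ on $\partial\Om$, starting from $u_0=0$. Each $u_{j+1}$ is taken to be the renormalized solution with datum $|\nabla u_j|^q+\mu$ (assuming inductively this datum is a finite measure, or at least that the scheme produces gradients in the right Lebesgue class so $|\nabla u_j|^q\in L^1$). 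By Theorem \ref{globalpw} applied to $u_{j+1}$, we get the pointwise bound
\begin{equation*}
|\nabla u_{j+1}(x)| \leq C_0\Big[{\bf I}_1^{2\operatorname{diam}(\Om)}(|\nabla u_j|^q\,dx + |\mu|)(x)\Big]^{\frac{1}{p-1}} \qquad \text{a.e. }x\in\Om.
\end{equation*}

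The heart of the matter is then to show that the capacity condition \eqref{capcondi} self-improves through this iteration: one wants to find a function $G$ (a suitable power of a truncated Riesz potential of $|\mu|$) such that if $|\nabla u_j|\leq \Lambda_0\, G$ pointwise, then $|\nabla u_{j+1}|\leq \Lambda_0\, G$ as well, provided $c_0$ in \eqref{capcondi} is small enough. The key analytic input is a ``quasi-metric'' or iterated-potential estimate of the form
\begin{equation*}
{\bf I}_1\big[\big({\bf I}_1(|\mu|)^{\frac{q}{p-1}}\big)\,dx\big](x) \lesssim \big[{\bf I}_1(|\mu|)(x)\big]^{\frac{q}{p-1}} + (\text{controllable remainder}),
\end{equation*}
which holds precisely when the datum satisfies the capacitary bound \eqref{capcondi}; here the exponent $q/(q-p+1)$ in the capacity is exactly the dual/scaling exponent that makes the nonlinear potential estimate close. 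This type of estimate for Riesz potentials and Wolff potentials under capacity conditions is classical (Maz'ya–Verbitsky theory, and its adaptation in the work of the authors and of Phuc); I would cite the relevant lemma and verify that the exponents match: with $s = q/(q-p+1)$, the condition $|\mu|(K)\lesssim \operatorname{Cap}_{1,s}(K)$ is equivalent (by Maz'ya–Verbitsky) to the pointwise inequality $\int_0^R \big(\tfrac{1}{t^{n-1}}\int_{B_t(x)} ({\bf I}_1(|\mu|))^{q/(p-1)}\big)^{1/(p-1)}\tfrac{dt}{t} \lesssim {\bf I}_1(|\mu|)(x)$, up to constants depending on $c_0$, which is what drives the fixed-point contraction.

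With the invariant region $\{|\nabla v|\leq \Lambda_0 G\}$ established, I would then pass to the limit: the sequence $u_j$ is bounded in the relevant Marcinkiewicz/Lebesgue space by the bound on $G$ (note $G = c[{\bf I}_1^{2\operatorname{diam}(\Om)}(|\mu|)]^{1/(p-1)}\in L^{q}$ at least locally, since the capacity condition forces $|\mu|$ to have enough decay — indeed $q/(q-p+1)>1$ gives the needed integrability), so one extracts a subsequence with $\nabla u_j\to\nabla u$ a.e. and in $L^{p-1+\epsilon}_{\rm loc}$, say, using the standard compactness for renormalized solutions with equi-integrable data (here $|\nabla u_j|^q$ is equi-integrable because it is dominated by $\Lambda_0^q G^q\in L^1$). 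Stability of renormalized solutions (cited from \cite{11DMOP}) then shows $u$ is a renormalized solution of \eqref{Riccati}, and the pointwise bound $|\nabla u(x)|\lesssim [{\bf I}_1^{2\operatorname{diam}(\Om)}(|\mu|)(x)]^{1/(p-1)}$ is inherited in the limit from the uniform bound $|\nabla u_j|\leq \Lambda_0 G$.

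The main obstacle I anticipate is the verification of the iterated potential inequality with the \emph{correct} constant dependence — specifically, ensuring that the implied constant in ${\bf I}_1[({\bf I}_1(|\mu|))^{q/(p-1)}]\lesssim {\bf I}_1(|\mu|)$ is genuinely proportional to a small power of $c_0$ (so that choosing $c_0$ small forces the self-map to land in the invariant ball with room to spare), rather than merely finite. This is where the sharp form of the Maz'ya–Verbitsky capacitary characterization is essential, and where care with the truncation radius $2\operatorname{diam}(\Om)$ versus the global Riesz potential matters; one typically localizes, uses that $\Om$ is bounded to absorb lower-order contributions, and tracks the homogeneity in $c_0$ explicitly. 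A secondary technical point is that $|\nabla u_j|$ need not be in $L^1_{\rm loc}$ a priori in the strongly singular range, so the iteration must be phrased directly in terms of the pointwise potential bounds (as in the proof of Theorem \ref{gradpw}) rather than in terms of energy estimates — but this is exactly the framework that Theorems \ref{maininterior} and \ref{globalpw} are designed to support, so it should go through.
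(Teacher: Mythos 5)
Your overall strategy — invariant region $\{|\nabla v|\le \Lambda_0 G\}$ with $G$ a power of a truncated Riesz potential, self-mapping forced by a composed-potential estimate whose constant is small when $c_0$ is small, then a compactness/stability passage to the limit — is exactly the skeleton of the paper's proof. The key inequality you invoke is the paper's
\[
\mathbf{P}\bigl[\mathbf{P}[\mu]^q\bigr](x)\le C_1\,c_0^{\frac{q-p+1}{(p-1)^2}}\,\mathbf{P}[\mu](x),\qquad
\mathbf{P}[\nu]:=\bigl[\mathbf{I}_1^{2\operatorname{diam}\Omega}(|\nu|)\bigr]^{\frac1{p-1}},
\]
taken from \cite[ineq.\ (2.10)]{Ph1}. (Note: as written, your first display has the wrong power on the right; the estimate you need is $\mathbf{I}_1\bigl[(\mathbf{I}_1|\mu|)^{q/(p-1)}\bigr]\lesssim \mathbf{I}_1|\mu|$, not $\lesssim (\mathbf{I}_1|\mu|)^{q/(p-1)}$. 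This is what makes the exponent $q/(q-p+1)$ in the capacity the right one.)

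There is, however, a genuine gap in your passage to the limit. You propose a Picard iteration $u_{j+1}=S(u_j)$ starting from $u_0=0$, obtain a uniform bound $|\nabla u_j|\le \Lambda_0 G$, extract a subsequence $u_{j_k}\to u$, and then invoke stability of renormalized solutions. But $u_{j_k}$ solves the problem with datum $|\nabla u_{j_k-1}|^q+\mu$, and the shifted index sequence $j_k-1$ need not converge to the same limit $u$ along the same subsequence. Without a contraction or monotonicity (which you do not have — $\mu$ is signed — and which the invariant-ball estimate alone does not give), the subsequence limit solves $-\operatorname{div}(A(x,\nabla u))=|\nabla \tilde u|^q+\mu$ for some \emph{other} limit $\tilde u$, and there is no reason $\tilde u=u$. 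The paper avoids this by not iterating: it applies a Schauder-type fixed-point theorem directly to the map $S$ on the convex set $E(\mu)=\{v\in W^{1,p}_0(\Omega):|\nabla v|\le 2C_0\mathbf{P}[\mu]\}$, citing \cite[Proof of Theorem 1.9]{HPanna} for the continuity/compactness verification, and it runs this Step 1 only for bounded $\mu$ (which keeps the map well-defined on $W^{1,p}_0$); general $\mu$ is then handled in Step 2 by mollifying $\mu_k=\rho_k*\mu$, checking $\mu_k$ still satisfies \eqref{capcondi} with a universal constant and $\mathbf{I}_1^{2\operatorname{diam}\Omega}(|\mu_k|)\le M\,\mathbf{I}_1^{2\operatorname{diam}\Omega}(|\mu|)$, and only then using the \cite{11DMOP} stability to pass to the limit in $k$. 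You should replace Picard iteration with Schauder (or otherwise prove the full sequence converges), and you should incorporate the bounded-datum-then-mollify reduction to make the fixed-point map and the final stability step rigorous.
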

	
We remark that condition \eqref{capcondi} is sharp at least for nonnegative measures with compact support in $\Om$ (see \cite{HMV, Ph1}). It is worth mentioning that 
Theorem \ref{main-Ric} completely solve a problem raised in \cite[pages 13--14]{BGV}. 
 On the other hand,  it is new mainly in the   case $\frac{n(p-1)}{n-1}\leq q<1$ as the case $p-1<q<\frac{n(p-1)}{n-1}$ and the case $q\geq 1$ have been treated in \cite{GMP} and \cite{HP2}, respectively. For other earlier work on  equation \eqref{Riccati}, we refer to the papers \cite{HMV, Ph1, Ph2, Ph3, MP, HP3} and the references therein. 
		
		\section{Comparison estimates}

	In what follows,  for  $x\in\RR^n$ we write $x=(x',x_n) \in \RR^{n-1}\times \RR$. With this,  
	we will use the notation $\|f\|_{L^{s_2}_{x'}L^{s_1}_{x_n}(Q_\rho(x_0))}$ for $s_1, s_2>0$ and $\rho>0$, to indicate the mixed norm
	$$\|f\|_{L^{s_2}_{x'}L^{s_1}_{x_n}(Q_\rho(x_0))}=\left(\int_{x_0'+(-\rho,\rho)^{n-1}}\left(\int_{x_{0n}+(-\rho,\rho)} |f(x',x_n)|^{s_1} dx_n\right)^{\frac{s_2}{s_1}} dx'\right)^{\frac{1}{s_2}}. $$
On the other hand, we write $\vertiii{f}_{L^{s_2}_{x'}L^{s_1}_{x_n}(Q_\rho(x_0))}$ for the normalized mixed norm
	$$\vertiii{f}_{L^{s_2}_{x'}L^{s_1}_{x_n}(Q_\rho(x_0))}=\left(\fint_{x_0'+(-\rho,\rho)^{n-1}}\left(\fint_{x_{0n}+(-\rho,\rho)} |f(x',x_n)|^{s_1} dx_n\right)^{\frac{s_2}{s_1}} dx'\right)^{\frac{1}{s_2}}. $$
	We begin with the following lemma.
	
	\begin{lemma}\label{test} Let $u\in W^{1,p}_{\rm loc}(\Om)$  be a solution of \eqref{quasi-measure}. Then for any  function $\phi\in C_0^\infty(\Om)$, $\phi\geq0$, and $\epsilon>0$, we have  	
		\begin{align*}
		 \int_{\Om} \left|\nabla \left[(1+|u|)^{\frac{p-1-\epsilon}{p}} \phi\right]\right|^p dx & \lesssim \int_{\Om} \phi^p d|\mu| + \int_{\Omega} (1+|u|)^{(p-1)(1+\epsilon)}  |\nabla \phi|^{p} dx. 
		\end{align*}	
	\end{lemma}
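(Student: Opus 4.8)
The plan is to test the weak formulation of \eqref{quasi-measure} against a carefully chosen test function built from a negative power of $1+|u|$ and the cutoff $\phi$, then use the structure conditions \eqref{condi1}--\eqref{condi2} together with Young's inequality to absorb the bad terms. Specifically, set $\beta = \frac{1-p-\epsilon}{p}$ (so $\beta < 0$ since $p>1$), and use as test function
\[
\psi = \mathrm{sgn}(u)\,\big[(1+|u|)^{\beta}-1\big]\,\phi^p ,
\]
which lies in $W^{1,p}_{\rm loc}(\Om)$ with compact support (the subtraction of $1$ and the sign make $\psi$ admissible and bounded; one may first work with $T_k(u)$ and pass to the limit, or truncate $(1+|u|)^\beta$ from below, to make this rigorous). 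Plugging $\psi$ into $\int_\Om A(x,\nabla u)\cdot\nabla\psi\,dx = \int_\Om \psi\,d\mu$ and computing $\nabla\psi$ produces two terms: a ``good'' term $|\beta|\int_\Om (1+|u|)^{\beta-1} A(x,\nabla u)\cdot\nabla u\,\phi^p\,dx$, which by \eqref{condi2} (or ellipticity extracted from \eqref{condi1}--\eqref{condi2}) is bounded below by $c\int_\Om (1+|u|)^{\beta-1}|\nabla u|^p\phi^p\,dx$, and a ``cross'' term $p\int_\Om [(1+|u|)^\beta-1] A(x,\nabla u)\cdot\nabla\phi\,\phi^{p-1}\,dx$.

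Next I would estimate the cross term and the right-hand side. Using \eqref{condi1}, $|A(x,\nabla u)|\le \Lambda|\nabla u|^{p-1}$, and $|(1+|u|)^\beta - 1|\le 1$, the cross term is controlled by $C\int_\Om |\nabla u|^{p-1}|\nabla\phi|\,\phi^{p-1}\,dx$; to make this compatible with the good term one writes $|\nabla u|^{p-1}\phi^{p-1} = \big[(1+|u|)^{(\beta-1)/p}|\nabla u|\phi\big]^{p-1}\cdot (1+|u|)^{(1-\beta)(p-1)/p}$ and applies Young's inequality with exponents $p/(p-1)$ and $p$, sending the factor $(1+|u|)^{(\beta-1)(p-1)/p}|\nabla u|^{p-1}\phi^{p-1}$ into (a small multiple of) the good term and leaving $C\int_\Om (1+|u|)^{(1-\beta)(p-1)/p}|\nabla\phi|^p\,dx$. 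Since $(1-\beta)\frac{p-1}{p} = \frac{p-1}{p}\cdot\frac{2p-1+\epsilon}{p}$, a short computation shows this exponent is bounded by $(p-1)(1+\epsilon)$ for $\epsilon$ in a fixed range (and in general one absorbs it; the precise exponent on the right of the claim has enough room), so this term is dominated by the second term on the right-hand side of the asserted inequality. The datum term $\int_\Om \psi\,d\mu$ is bounded by $\int_\Om \phi^p\,d|\mu|$ since $|\psi|\le \phi^p$ — this is exactly the first term on the right-hand side of the claim.

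Finally I would convert the good term into the quantity appearing on the left of the statement. One computes
\[
\nabla\big[(1+|u|)^{(p-1-\epsilon)/p}\phi\big] = \tfrac{p-1-\epsilon}{p}(1+|u|)^{(p-1-\epsilon)/p - 1}\,\mathrm{sgn}(u)\nabla u\,\phi + (1+|u|)^{(p-1-\epsilon)/p}\nabla\phi,
\]
so that $\big|\nabla[(1+|u|)^{(p-1-\epsilon)/p}\phi]\big|^p \lesssim (1+|u|)^{\beta-1}|\nabla u|^p\phi^p + (1+|u|)^{p-1-\epsilon}|\nabla\phi|^p$, using $(p-1-\epsilon)/p - 1 = (\beta-1) + \tfrac{?}{}$; indeed $(p-1-\epsilon)/p - p\cdot 1 \cdot \tfrac1p$... more precisely $\big[(1+|u|)^{(p-1-\epsilon)/p-1}\big]^p = (1+|u|)^{p-1-\epsilon-p} = (1+|u|)^{\beta-1 + (\text{nonneg.})}$ and on the support of $\phi$ where this term is nonzero one can bound it by the good term up to constants, while the second piece $(1+|u|)^{p-1-\epsilon}|\nabla\phi|^p \le (1+|u|)^{(p-1)(1+\epsilon)}|\nabla\phi|^p$ is already of the desired form. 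Combining the lower bound for the good term with these upper bounds and the estimates from the previous paragraph yields the claim. The main obstacle I anticipate is the bookkeeping of exponents — making sure the power of $(1+|u|)$ generated by Young's inequality in the cross term and by expanding the gradient on the left is genuinely no larger than $(p-1)(1+\epsilon)$ on the support of $\nabla\phi$, and handling the admissibility of the test function rigorously (truncation of $u$ and of the singular/large power, then monotone or dominated convergence); the analytic content beyond that is routine.
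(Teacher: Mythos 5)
Your overall strategy (test with a negative power of $1+|u|$ times $\phi^p$, split into a good term and a cross term, absorb the cross term via Young's inequality, then expand $\nabla\bigl[(1+|u|)^{(p-1-\epsilon)/p}\phi\bigr]$) is the same as the paper's, but the exponent you put on the test function is wrong, and the error is not a harmless bookkeeping slip. You take $\beta=\frac{1-p-\epsilon}{p}$, whereas the paper's test function is $\mathrm{sign}(u)\bigl[1-(1+|u|)^{-\epsilon}\bigr]\phi^p$, i.e.\ exponent $-\epsilon$. These do not agree: $\beta=-\epsilon$ only when $\epsilon=1$.

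Tracing the consequences: your good term is $\int(1+|u|)^{\beta-1}|\nabla u|^p\phi^p\,dx$ with $\beta-1=\frac{1-2p-\epsilon}{p}$, while the first term produced by expanding the gradient on the left-hand side is $(1+|u|)^{-1-\epsilon}|\nabla u|^p\phi^p$. One checks that $(-1-\epsilon)-(\beta-1)=\frac{(p-1)(1-\epsilon)}{p}$, which is \emph{positive} for $\epsilon<1$; since $1+|u|\geq 1$ this means $(1+|u|)^{-1-\epsilon}\geq(1+|u|)^{\beta-1}$, so your good term sits \emph{below} the quantity you need to control, not above it. Your own note ``$(1+|u|)^{\beta-1+(\text{nonneg.})}$'' has the inequality pointing the wrong way. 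Likewise, the Young's-inequality residue you generate is $\int(1+|u|)^{(1-\beta)(p-1)}|\nabla\phi|^p\,dx$ with $(1-\beta)(p-1)=\frac{(2p-1+\epsilon)(p-1)}{p}$, and this exceeds the claimed exponent $(p-1)(1+\epsilon)$ precisely when $\epsilon<1$. So both the absorption step and the final conversion step fail in exactly the regime you hope to wave away as ``enough room.'' This regime matters: the lemma is invoked later (e.g.\ inside Lemma~\ref{ureverse} with $\epsilon<q_1/n$, and in the mixed-norm gradient estimate with $1+\epsilon=\gamma\in(1,\min\{p,n/(n-p)\})$, forcing $\epsilon<1$ for $p<3/2$). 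The fix is simply to take $\beta=-\epsilon$; then the good term becomes $\int(1+|u|)^{-1-\epsilon}|\nabla u|^p\phi^p\,dx$, the Young residue becomes $\int(1+|u|)^{(1+\epsilon)(p-1)}|\nabla\phi|^p\,dx$, and the second piece of the gradient expansion contributes $(1+|u|)^{p-1-\epsilon}|\nabla\phi|^p\leq(1+|u|)^{(p-1)(1+\epsilon)}|\nabla\phi|^p$, matching the statement exactly. The admissibility/truncation remarks you make are fine and standard.
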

	\begin{proof} Let $\phi$ be a function in $C_0^\infty(\Om)$ such that $\phi\geq 0$.
		Then using $${\rm sign}(u)[1-(1+|u|)^{-\epsilon}] \phi^p, \qquad \epsilon>0,$$ as a test function in \eqref{quasi-measure} we have 	
		\begin{align*}
		&\epsilon \int_{\Om} \frac{|\nabla u|^p}{(1+ |u|)^{1+\epsilon}} \phi^p dx  \lesssim \int \phi^p d|\mu| + p\int_{\Omega}|\nabla u|^{p-1} \phi^{p-1} |\nabla \phi|  dx \\
		&= \int_\Om \phi^p d|\mu| + p\int_{\Om}|\nabla u|^{p-1} \phi^{p-1}  (1+|u|)^{(-\epsilon-1)\frac{p-1}{p}}   (1+|u|)^{(\epsilon+1)\frac{p-1}{p}} |\nabla \phi|  dx.
		\end{align*}
		Thus by Young's inequality we obtain 
$$
		\int_{\Om} \frac{|\nabla u|^p}{(1+ |u|)^{1+\epsilon}} \phi^p dx  \lesssim \int_{\Om} \phi^p d|\mu| + \int_{\Omega} (1+|u|)^{(p-1)(1+\epsilon)}  |\nabla \phi|^{p} dx. $$
	 Now in view of the identity
		$$\nabla \left[(1+|u|)^{\frac{p-1-\epsilon}{p}} \phi\right]= \tfrac{p-1-\epsilon}{p} (1+|u|)^{\frac{-1-\epsilon}{p}} \nabla u\,  {\rm sign}(u) \phi + (1+|u|)^{\frac{p-1-\epsilon}{p}} \nabla \phi,$$
		we obtain the lemma.
			\end{proof}
	
	Lemma \ref{test} yields the following reverse H\"older type inequality for the solution.
	
	\begin{lemma}\label{ureverse} Let  $u\in W^{1,p}_{\rm loc}(\Om)$ be a solution of \eqref{quasi-measure}. Then for any   cube $Q_R(x_0)\subset\Om$ and any $q>q_1>0$ we 
		we have  	
		\begin{align*}
		\left(\fint_{Q_{\sigma R}(x_0)}|u|^q dx\right)^{\frac{1}{q}} \lesssim \left(\frac{|\mu|(Q_R(x_0))}{R^{n-p}}\right)^{\frac{1}{p-1}} + \left(\fint_{Q_{R}(x_0)} |u|^{q_1} dx\right)^{\frac{1}{q_1}},
		\end{align*}
		provided $q<\frac{n(p-1)}{n-p}$ and $\sigma\in (0,1)$.
	\end{lemma}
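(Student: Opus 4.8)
The plan is to derive this reverse H\"older inequality for $|u|$ by combining Lemma \ref{test} with the Sobolev inequality, in the classical De Giorgi--Nash--Moser spirit adapted to measure data. First I would fix a cube $Q_R(x_0)\Subset\Om$, and for a pair of concentric cubes $Q_{\rho}\subset Q_{\rho'}$ with $\sigma R\le \rho<\rho'\le R$ choose a standard cut-off $\phi\in C_0^\infty(Q_{\rho'})$ with $\phi\equiv 1$ on $Q_\rho$, $0\le\phi\le 1$, and $|\nabla\phi|\lesssim (\rho'-\rho)^{-1}$. Applying Lemma \ref{test} with this $\phi$ and a small parameter $\epsilon>0$ (to be chosen depending on $q$, so that $(p-1-\epsilon)/p>0$), and then the Sobolev inequality $\|v\|_{L^{p^*}}\lesssim \|\nabla v\|_{L^p}$ with $p^*=np/(n-p)$ applied to $v=(1+|u|)^{(p-1-\epsilon)/p}\phi$, I would obtain
\begin{align*}
\left(\fint_{Q_\rho}(1+|u|)^{\frac{n(p-1-\epsilon)}{n-p}}dx\right)^{\frac{n-p}{np}}\lesssim \left(\frac{|\mu|(Q_R)}{\rho'^{\,n-p}}\right)^{\frac1p}\rho'^{\frac{n-p}{p}-\frac{n}{p}+1}+\cdots,
\end{align*}
where the remaining terms carry a factor $(\rho'-\rho)^{-1}$ times an $L^{(p-1)(1+\epsilon)}$-type average of $1+|u|$ over $Q_{\rho'}$. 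The exponent bookkeeping is the place to be careful: choosing $\epsilon$ small guarantees that the gain exponent $n(p-1-\epsilon)/(n-p)$ on the left is any prescribed value below the critical $n(p-1)/(n-p)$, which is exactly the hypothesis $q<n(p-1)/(n-p)$, while the exponent $(p-1)(1+\epsilon)$ on the right stays below that same target for $\epsilon$ small since $(p-1)(1+\epsilon)<n(p-1)/(n-p)$ automatically.

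The second step is to convert this into the stated form. The right-hand side now contains an average of $(1+|u|)^{s}$ with $s=(p-1)(1+\epsilon)<q$, while the left has an average of $(1+|u|)^{q}$ over a slightly smaller cube; there is also the additive measure term, which after scaling is comparable to $\big(|\mu|(Q_R)/R^{n-p}\big)^{1/(p-1)}$ up to the factor $(\rho'-\rho)^{-1}$-free part. I would absorb the self-improving structure via a standard iteration/interpolation lemma (e.g. the lemma of Giaquinta--Giusti, stated as: if $f(\rho)\le \theta f(\rho')+ A(\rho'-\rho)^{-\beta}+B$ for all $\sigma R\le\rho<\rho'\le R$ with $\theta<1$, then $f(\sigma R)\lesssim A(R-\sigma R)^{-\beta}+B$). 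Applying this with $f(\rho)=\|1+|u|\|_{L^q(Q_\rho)}$-type quantity — after first using Young's inequality to split the $L^s$ average against the $L^q$ average with a small constant in front of the $L^q$ piece — yields
\begin{align*}
\left(\fint_{Q_{\sigma R}}(1+|u|)^{q}dx\right)^{\frac1q}\lesssim \left(\frac{|\mu|(Q_R)}{R^{n-p}}\right)^{\frac{1}{p-1}}+\left(\fint_{Q_R}(1+|u|)^{q_1}dx\right)^{\frac{1}{q_1}},
\end{align*}
for any $0<q_1<q$. Subtracting off the harmless constant $1$ (which is dominated by the right-hand side) gives the claimed inequality for $|u|$ in place of $1+|u|$. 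For $q_1<1$ one can first run the argument with $q_1$ replaced by some fixed exponent $\ge 1$, say $q_1=p-1$ if $p-1\ge\ldots$, or simply note that the final iteration lemma already allows the lower exponent to be pushed down by the usual trick of interpolating $L^{q}$ between $L^{q_1}$ and a higher power and absorbing.

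The main obstacle I anticipate is the exponent management near the critical value $n(p-1)/(n-p)$: one must verify that a single small $\epsilon$ simultaneously (i) keeps $(p-1-\epsilon)/p$ positive so the test function in Lemma \ref{test} is admissible, (ii) makes the Sobolev-gained exponent $\ge q$ so that $L^q(Q_{\sigma R})$ is controlled, and (iii) keeps the right-hand exponent $(p-1)(1+\epsilon)$ strictly below $q$ (or at worst below $n(p-1)/(n-p)$, which then lets one iterate down). Since $q$ is strictly subcritical this is feasible, but it requires choosing $\epsilon=\epsilon(n,p,q)$ carefully and, in the borderline regime where $q$ is close to $n(p-1)/(n-p)$, possibly iterating the gain finitely many times rather than in one shot. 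A minor secondary point is that $u$ is only in $W^{1,p}_{\mathrm{loc}}$, so all the manipulations above should first be justified for $T_k(u)$ and then passed to the limit $k\to\infty$ using monotone/Fatou arguments, which is routine given that the right-hand side of Lemma \ref{test} is uniform in $k$ after truncation.
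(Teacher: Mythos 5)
Your proposal follows essentially the same route as the paper's proof: rescale so that $Q_R=(-1,1)^n$ and the right-hand side is $\lesssim 1$, apply Lemma \ref{test} with a cutoff $\phi$ between two intermediate cubes $Q_s\subset Q_r$, use Sobolev embedding to reach the exponent $q_\epsilon=(p-1-\epsilon)p^*/p$, interpolate the resulting $L^{(p-1)(1+\epsilon)}$ norm on the right between $L^{q_1}$ and $L^{q_\epsilon}$, absorb by Young's inequality, and finish with Giusti's Lemma 6.1. All of these steps appear in your write-up, and the exponent bookkeeping you flag as the delicate point is indeed the crux; the paper resolves it with the single constraint $\epsilon<q_1/n$, which makes the Young absorption exponent $\frac{\tilde q_\epsilon-q_1}{q_\epsilon-q_1}\frac{n}{n-p}$ strictly less than $1$ in one shot, so the finite iteration you mention as a fallback is not actually needed: since $q_\epsilon\to n(p-1)/(n-p)$ as $\epsilon\to 0^+$, one can always pick $\epsilon$ small enough that $q\le q_\epsilon<\frac{n(p-1)}{n-p}$, $\tilde q_\epsilon<q_\epsilon$, and $\epsilon<q_1/n$ hold simultaneously.
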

	\begin{proof} 
		By translation and scale invariance, we may assume that $x_0=(0,\dots,0)$, $Q_R(x_0)=(-1,1)^n$, and 
		\begin{equation}\label{scalecondfirst}
		|\mu|((-1,1)^n)^{\frac{1}{p-1}} + \|u\|_{L^{q_1}((-1,1)^n)} \lesssim 1.
		\end{equation}	
		Moreover, we  need to show that for any $\sigma<1$,
		$$\|u\|_{L^q((-\sigma,\sigma)^n)} \lesssim 1; $$
		see \cite[Remark 4.1]{Duzamin2} for details.\\Let $\epsilon>0$ and let $\phi$ be a function in $C_0^\infty(Q_{r}(0))$ such that $0\leq \phi\leq 1$, $\phi\equiv 1$ on $Q_{s}(0)$, $1\geq r>s>0$, and $|\nabla \phi|\leq C/(r-s).$ By Lemma \ref{test} and Sobolev embedding theorem we have 
$$
		\|(1+|u|)^{\frac{p-1-\epsilon}{p}} \phi\|^p_{L^{p^{*}}(\Om)} 
		\lesssim \int_\Om \phi^p d|\mu|  + \int_{\Omega} (1+|u|)^{(p-1)(1+\epsilon)}  |\nabla \phi|^{p} dx,$$
		where $p^*=\frac{np}{n-p}$. 	This gives, in view of \eqref{scalecondfirst}, 
$$
		\left(\int_{Q_s(0)}(1+|u|)^{\frac{(p-1-\epsilon)p^{*}}{p}}dx\right)^{\frac{p}{p^*}}  \lesssim 1  + \frac{1}{(r-s)^p}\int_{Q_r(0)} (1+|u|)^{(p-1)(1+\epsilon)} dx.$$
		Let $$q_\epsilon:=\frac{(p-1-\epsilon)p^{*}}{p}, \qquad \tilde{q}_\ep:= (p-1)(1+\epsilon),$$
		and choose $\ep$ sufficiently small so that 
		 $q_\ep\in\left [q,\frac{n(p-1)}{n-p}\right )$, and $\tilde{q}_\ep <q_\epsilon.$\\
		 Then we can rewrite the above inequality as 
$$\int_{Q_s(0)}(1+|u|)^{q_\epsilon }dx \lesssim 1  + \frac{1}{(r-s)^{p^*}} \left(\int_{Q_r(0)} (1+|u|)^{\tilde{q}_\ep} dx\right)^{p^*/p}.$$
		Also, we may assume that $q_1< p-1 < \tilde{q}_\epsilon$ and then by H\"older's inequality  and \eqref{scalecondfirst} we have 
		\begin{align*}
		\left(\int_{Q_r(0)} (1+|u|)^{\tilde{q}_\ep} dx\right)^{\frac{1}{\tilde{q}_\ep}} &\leq \left(\int_{Q_r(0)} (1+|u|)^{q_\ep} dx\right)^{\frac{\theta}{q_\ep}} \left(\int_{Q_r(0)} (1+|u|)^{q_1} dx\right)^{\frac{1-\theta}{q_1}}\\
		&\leq \left(\int_{Q_r(0)} (1+|u|)^{q_\ep} dx\right)^{\frac{\theta}{q_\ep}},
		\end{align*}
		where 
		$\theta=\frac{(\tilde{q}_\ep-q_1) q_\ep}{(q_\ep-q_1) \tilde{q}_\ep}.$\\
		We next further restrict $\epsilon<\frac{q_1}{n}$ so that 
		$$\frac{\theta  \tilde{q}_\ep}{q_\ep} \frac{ p^{*}}{p}= \frac{\tilde{q}_\ep-q_1}{q_\ep-q_1} \frac{n}{n-p}<1.$$ 
		Then by Young's inequality with exponents
		$\frac{pq_\ep}{\theta \tilde{q}_\ep p^{*}},$ and  $\frac{pq_\ep}{pq_\ep- \theta \tilde{q}_\ep p^{*}},$
		we get 
$$
		\int_{Q_s(0)}(1+|u|)^{q_\ep}dx \leq C 	   +  \frac{1}{2}\int_{Q_r(0)}(1+|u|)^{q_\ep}dx
		+ \frac{C}{(r-s)^{\mathcal{H}_0}},$$
		where 
		$$ \mathcal{H}_0=\frac{p^{*} pq_\ep}{pq_\ep-\theta\tilde{q}_\ep p^{*}}=\frac{np(q_\ep-q_1)}{n(q_\ep-\tilde{q}_\ep)-p(q_\ep-q_1)}>0.$$
		Now we may apply Lemma 6.1 of \cite{Giu} to obtain 
$$
		\int_{Q_\sigma(0)}(1+|u|)^{q_\ep}dx \leq C,$$
		which of course yields the desired result. 	
	\end{proof}\\
	We next give a mixed norm estimate for the gradient.
	\begin{lemma}
		Let  $Q_r(x_0)\subset\Om$ and $1-\frac{n-1}{p}<\alpha<1/p$, $\alpha>0$, $n\geq 2$.  Then for any $\sigma\in(0,1)$ and $q_1>0$ we have 
$$\vertiii{\nabla u}_{L^{p_2}_{x'}L_{x_n}^{p_1}(Q_{\sigma r}(x_0))}\lesssim \left(\frac{|\mu|(Q_r(x_0))}{r^{n-1}}\right)^{\frac{1}{p-1}} + \frac{1}{r} \left(\fint_{Q_r(x_0)}|u|^{q_1} dx\right)^{\frac{1}{q_1}},$$
		where  $\gamma\in(1,\min\{p,n/(n-p)\})$, and 
		 $$p_1=\frac{p-\gamma}{1-\alpha \gamma}, \qquad p_2=\frac{(n-1)(p-\gamma)}{n-1-(1-\alpha)\gamma}.  $$	
	\end{lemma}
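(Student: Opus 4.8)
The plan is to bound the mixed norm of $\nabla u$ by first controlling $\nabla u$ pointwise (in an averaged sense) by a fractional integral of $|\mu|$ plus a term involving $u$ itself, and then to compute the mixed-norm of that fractional integral. The natural device is a Riesz-potential estimate for solutions of \eqref{quasi-measure}: since $u$ solves $-\operatorname{div}(A(x,\nabla u))=\mu$ in $Q_r(x_0)$, on any subcube one has the standard gradient bound
\begin{equation*}
|\nabla u(x)|\lesssim {\bf I}_1^{c r}(|\mu|)(x)^{\frac{1}{p-1}}+\frac{1}{r}\Big(\fint_{Q_r(x_0)}|u-\lambda|^{q_1}\,dx\Big)^{\frac{1}{q_1}}
\end{equation*}
in a weak sense, or more robustly one uses a truncated representation: writing $u=u_1+u_2$ where $u_2$ is $A$-harmonic and $u_1$ carries the measure, the comparison estimate of Theorem~\ref{maininterior} gives $\fint|\nabla u_1|^{\kappa}$ in terms of $|\mu|(Q_{\Sigma r})/r^{n-1}$, while $\nabla u_2$ is bounded on smaller cubes by $\fint_{Q_r}|u-\lambda|^{q_1}/r$ via interior $C^{1,\alpha}$ estimates for the homogeneous equation combined with Lemma~\ref{ureverse}. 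The main point is that the gradient of the ``measure part'' obeys a Riesz-potential bound whose mixed $L^{p_2}_{x'}L^{p_1}_{x_n}$ norm can be estimated.

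First I would reduce, by translation and scaling as in the proof of Lemma~\ref{ureverse}, to $x_0=0$, $Q_r=(-1,1)^n$, and $|\mu|((-1,1)^n)^{1/(p-1)}+\|u\|_{L^{q_1}((-1,1)^n)}\lesssim 1$, so that the target becomes $\vertiii{\nabla u}_{L^{p_2}_{x'}L^{p_1}_{x_n}((-\sigma,\sigma)^n)}\lesssim 1$. Next I would fix $\gamma\in(1,\min\{p,n/(n-p)\})$ and invoke a reverse-H\"older / fractional-integrability estimate for $\nabla u$: the gradient of the solution lies in the Lorentz-type space dictated by the measure, and more precisely one has the pointwise-type bound $|\nabla u(x)|^{\gamma}\lesssim {\bf I}_1(|\mu|)(x)^{\gamma/(p-1)}$ plus a harmless bounded term on $(-\sigma',\sigma')^n$ for $\sigma<\sigma'<1$. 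The role of the parameter $\alpha$ with $1-\tfrac{n-1}{p}<\alpha<1/p$ is to interpolate: one writes ${\bf I}_1(|\mu|)(x)=\int_0^{c}\frac{|\mu|(B_t(x))}{t^{n-1}}\frac{dt}{t}$ and splits this into the contribution along $x_n$ (an $(n{-}1)$-dimensional fractional integral of order governed by $\alpha$, mapping into $L^{p_1}_{x_n}$) and the transverse contribution (a fractional integral in $x'$ of order tuned by $1-\alpha$, mapping into $L^{p_2}_{x'}$). The exponents $p_1=\frac{p-\gamma}{1-\alpha\gamma}$ and $p_2=\frac{(n-1)(p-\gamma)}{n-1-(1-\alpha)\gamma}$ are exactly what a one-dimensional Hardy--Littlewood--Sobolev estimate of fractional order $\alpha$ in the $x_n$-variable, followed by an $(n{-}1)$-dimensional one of fractional order $1-\alpha$ in $x'$, produce when the starting integrability is $\gamma$; the hypotheses $\alpha<1/p$ and $\alpha>1-\frac{n-1}{p}$ guarantee $p_1>0$ and $p_2>0$ (equivalently $1-\alpha\gamma>0$ and $n-1-(1-\alpha)\gamma>0$, using $\gamma<p$).

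The key computational step, then, is a mixed-norm Hardy--Littlewood--Sobolev inequality: for a nonnegative measure $\nu$ on $(-1,1)^n$ with $\nu((-1,1)^n)\lesssim 1$, and for $\gamma$, $\alpha$, $p_1$, $p_2$ as above, one has
\begin{equation*}
\Big\|\,\Big(\int_0^{c}\tfrac{\nu(B_t(\cdot))}{t^{n-1}}\tfrac{dt}{t}\Big)^{\frac{1}{\gamma}}\,\Big\|_{L^{p_2}_{x'}L^{p_1}_{x_n}((-\sigma,\sigma)^n)}\lesssim \nu((-1,1)^n)^{\frac{1}{\gamma}}+1.
\end{equation*}
This I would prove by a dyadic decomposition of the $t$-integral, estimating the average of $\nu(B_t(x))/t^{n-1}$ by a sum of Riesz-type kernels, then applying Minkowski's integral inequality in the mixed-norm space and, on each scale, the one-dimensional Sobolev embedding in $x_n$ followed by the $(n{-}1)$-dimensional one in $x'$; convergence of the geometric series in the scale parameter is where the strict inequalities on $\alpha$ enter. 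Combining this with Lemma~\ref{ureverse} (to absorb the $\|u\|_{L^{q_1}}$ term into $1$ under the normalization) and the gradient bound furnished by Theorem~\ref{maininterior} closes the argument.

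The main obstacle I anticipate is making the splitting of the Riesz potential into a ``vertical'' and ``horizontal'' fractional integral rigorous while keeping the exponents exactly $p_1$ and $p_2$: one must be careful that $B_t(x)$ is not a product set, so the naive factorization of $\nu(B_t(x))$ over the $x_n$- and $x'$-directions is false, and the fix is to work at the level of the \emph{kernel} (bounding $t^{-n}\mathbf 1_{B_t}$ by $(|x_n|+t)^{-1-\alpha}\,(|x'|+t)^{-(n-1)-(1-\alpha)}\cdot t^{\alpha+(1-\alpha)}=(|x_n|+t)^{-1-\alpha}(|x'|+t)^{-(n-1)-(1-\alpha)}\cdot t$ up to constants on the relevant range of $t$) and then integrating out $t$ to land on an iterated Riesz kernel $|x_n-y_n|^{-(1-\alpha)}|x'-y'|^{-(n-1)+(1-\alpha)(n-1)/?}$—getting the bookkeeping of orders and the resulting conjugate exponents to match $p_1,p_2$ precisely is the delicate part, and is exactly why the admissible window for $\alpha$ is a genuine open interval rather than a single value.
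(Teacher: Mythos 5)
There is a genuine gap, and it is structural: your plan is circular and also invokes a pointwise bound that is not available under the hypotheses of this lemma. You propose to start from a pointwise estimate of the form $|\nabla u(x)|\lesssim {\bf I}_1(|\mu|)(x)^{1/(p-1)}+\dots$, calling it ``standard.'' In the strongly singular range $1<p<3/2$ this is precisely Theorem~\ref{gradpw} of the paper, which is one of the \emph{consequences} of the comparison machinery being built here, and moreover requires the Dini condition \eqref{integralmodul} on $A$ and $u\in C^1(\Omega)$ --- neither of which is assumed in this lemma (only $u\in W^{1,p}_{\rm loc}$). You also propose to invoke Theorem~\ref{maininterior} to control the ``measure part'' of $\nabla u$; but the paper explicitly states that Theorem~\ref{maininterior} follows from Lemma~\ref{comsqrt3}, Lemma~\ref{revholderformixgrad}, and Corollary~\ref{co234}, and Corollary~\ref{co234} is deduced from the very lemma you are trying to prove. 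So the argument presupposes its own output. The secondary concern --- that the ``mixed-norm Hardy--Littlewood--Sobolev'' step is left as a sketch whose bookkeeping you yourself flag as delicate --- is real but moot, because the pointwise input to that step is not legitimately available.

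The paper's actual proof is much more elementary and never leaves the energy framework. After the same scaling normalization you suggest, one tests \eqref{quasi-measure} with ${\rm sign}(u)[1-(1+|u|)^{-\epsilon}]\phi^p$ (Lemma~\ref{test}) to get a Caccioppoli-type bound on $\int |\nabla u|^p (1+|u|)^{-\gamma}\phi^p$ for a fixed $\gamma\in(1,\min\{p,n/(n-p)\})$, together with, via the anisotropic Sobolev embedding of Benedek--Panzone and Adams--Bagby applied to $(1+|u|)^{(p-\gamma)/p}\phi$, a mixed-norm bound on $1+|u|$ in $L^{(n-1)(p-\gamma)/(n-1-(1-\alpha)p)}_{x'}L^{(p-\gamma)/(1-\alpha p)}_{x_n}$; the constraints $1-\frac{n-1}{p}<\alpha<1/p$ are exactly what make this embedding admissible. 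One then writes $\nabla u = \frac{\nabla u}{(1+|u|)^{\gamma/p}}\cdot(1+|u|)^{\gamma/p}$ and applies H\"older's inequality in the mixed-norm space, which forces the exponents $p_1=\frac{p-\gamma}{1-\alpha\gamma}$ and $p_2=\frac{(n-1)(p-\gamma)}{n-1-(1-\alpha)\gamma}$. Lemma~\ref{ureverse} (itself a consequence of Lemma~\ref{test}) absorbs the $u$-terms under the normalization. No decomposition $u=u_1+u_2$, no comparison function $w$, and no Riesz-potential bound is needed or used at this stage; that potential-theoretic structure only emerges downstream, once the comparison estimates are in place.
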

	\begin{proof} By translating and scaling considerations, we may assume that $x_0=(0,\dots,0)$, $Q_r(x_0)=Q_1(0)=(-1,1)^n$, and \eqref{scalecondfirst}
	holds.	
	Moreover, we  need to show that for any $\sigma<1$
	$$\|\nabla u\|_{L^{p_2}_{x'}L_{x_n}^{p_1}(Q_{\sigma}(0))} \lesssim 1. $$
		By Lemma \ref{test}, Lemma \ref{ureverse}, and \eqref{scalecondfirst}, we find
		\begin{equation}\label{nabuu}
		\int_{Q_{\sigma }(0)} \frac{|\nabla u|^p}{(1+ |u|)^{\gamma}} dx \leq C |\mu|(Q_{1}(0))  +  C \int_{Q_{1}(0)} (1+|u|)^{(p-1)\gamma}   dx\leq C,
		\end{equation}
		provided 	$\gamma\in (1,\min\{p,n/(n-p)\})$. Here $\sigma\in(0,1)$ and the constant $C$ may depend on $\sigma$.\\Also, by  Lemma \ref{test} and Sobolev embedding theorem  for Lebesgue spaces of mixed norm  (see \cite{BP, AB}), for    any $\alpha$ satisfying  $1-\frac{n-1}{p}<\alpha<1/p$,  $\alpha>0$, we have 
$$
		\|(1+|u|)^{\frac{p-\gamma}{p}} \phi\|^p_{L_{x'}^{\frac{(n-1)p}{n-1-(1-\alpha)p}}L_{x_n}^{\frac{p}{1-\alpha p}}} 
		\lesssim \int_\Om \phi^p d|\mu|  + \int_{\Omega} (1+|u|)^{(p-1)\gamma}  |\nabla \phi|^{p} dx,$$
		where $\phi\in C_0^\infty(Q_1(0))$, $\phi\geq0$, and $1<\gamma <\min\{p,n/(n-p)\}$. By Lemma \ref{ureverse} and \eqref{scalecondfirst}, this implies in particular that 
		\begin{equation} \label{u}
		\left\|1+|u|\right\|^{p-\gamma}_{L_{x'}^{\frac{(n-1)(p-\gamma)}{n-1-(1-\alpha)p}}L_{x_n}^{\frac{p-\gamma}{1-\alpha p}}(Q_{\sigma }(0))}\lesssim 1. 
		\end{equation}
		On the other hand, by H\"older's inequality we have 
		\begin{equation}\label{nab}
	\|\nabla u\|_{L^{p_2}_{x'}L_{x_n}^{p_1}(Q_{\sigma }(0))} \leq  \Big\| \frac{\nabla u}{ {(1+|u|)^{\frac{\gamma}{p}}}}\Big\|_{L^{p}(Q_{\sigma }(0))} \| 1+|u| \|^{\frac{\gamma}{p}}_{L^{\frac{\gamma q_2}{p}}_{x'}L_{x_n}^{\frac{\gamma q_1}{p}}(Q_{\sigma}(0))},
		\end{equation}
		where 
		$$\frac{1}{p_1}=\frac{1}{p} + \frac{1}{q_1}, \quad \frac{1}{p_2}=\frac{1}{p} + \frac{1}{q_2}, \quad p_1, p_2, q_1,q_2>0.$$
		We now choose 
		$$q_1=\frac{p(p-\gamma)}{\gamma(1-\alpha p)}, \qquad q_2=\frac{p(n-1)(p-\gamma)}{\gamma[n-1-(1-\alpha)p]},  $$	
		which forces 
		$$p_1=\frac{p-\gamma}{1-\alpha \gamma}, \qquad p_2=\frac{(n-1)(p-\gamma)}{n-1-(1-\alpha)\gamma}.$$
		At this point, we  combine \eqref{nabuu}, \eqref{u}, and \eqref{nab} to deduce
$$
		\|\nabla u\|_{L^{p_2}_{x'}L_{x_n}^{p_1}(Q_{\sigma }(0))}  \lesssim 1.$$
		This completes the proof of the lemma.
	\end{proof}\\\\
	We observe that as $\gamma\downarrow 1$, we have $p_1\uparrow \frac{p-1}{1-\alpha}$ and  $p_2\uparrow \frac{(n-1)(p-1)}{n-2+\alpha}$. Thus by H\"older's inequality we  
		obtain the following result.
	\begin{corollary}\label{co234}
		Let  $Q_r(x_0)\subset\Om$ and $1<p<n$, $n\geq 2$.  Then for any  $\sigma\in(0,1)$ and $q_1 >0$, we have 
$$
		\vertiii{\nabla u}_{L^{s_2}_{x'}L_{x_n}^{s_1}(Q_{\sigma r}(x_0))}\lesssim \left(\frac{|\mu|(Q_r(x_0))}{r^{n-1}}\right)^{\frac{1}{p-1}} + \frac{1}{r} \left(\fint_{Q_r(x_0)}|u|^{q_1} dx\right)^{\frac{1}{q_1}},$$
		 provided 
		$$\frac{p-1}{n-1}<s_1<p, \qquad 0<s_2< \frac{s_1(n-1)(p-1)}{s_1(n-1) -p+1}.$$
	\end{corollary}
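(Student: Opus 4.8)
The plan is to deduce this as a limiting case of the preceding lemma followed by a Hölder interpolation in the mixed-norm scale. First I would observe that the previous lemma provides, for each admissible $\gamma\in(1,\min\{p,n/(n-p)\})$ and each $\alpha$ with $1-\tfrac{n-1}{p}<\alpha<1/p$, $\alpha>0$, the bound
\[
\vertiii{\nabla u}_{L^{p_2(\gamma,\alpha)}_{x'}L^{p_1(\gamma,\alpha)}_{x_n}(Q_{\sigma r}(x_0))}\lesssim \left(\frac{|\mu|(Q_r(x_0))}{r^{n-1}}\right)^{\frac{1}{p-1}}+\frac1r\left(\fint_{Q_r(x_0)}|u|^{q_1}\,dx\right)^{\frac1{q_1}},
\]
with $p_1=\frac{p-\gamma}{1-\alpha\gamma}$ and $p_2=\frac{(n-1)(p-\gamma)}{n-1-(1-\alpha)\gamma}$. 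As noted just before the statement, letting $\gamma\downarrow 1$ gives $p_1\uparrow\frac{p-1}{1-\alpha}$ and $p_2\uparrow\frac{(n-1)(p-1)}{n-2+\alpha}$; since the right-hand side of the bound is independent of $\gamma$, and since a mixed norm over a cube of finite measure is monotone in each exponent, for any target pair $(s_1,s_2)$ with $s_1<\frac{p-1}{1-\alpha}$ and $s_2<\frac{(n-1)(p-1)}{n-2+\alpha}$ one can pick $\gamma>1$ close enough to $1$ so that $s_1\le p_1(\gamma,\alpha)$ and $s_2\le p_2(\gamma,\alpha)$, and then Hölder's inequality on $Q_{\sigma r}(x_0)$ in the $x_n$-variable followed by Hölder in the $x'$-variable (absorbing the finite-measure factors $(\sigma r)^{\cdot}$ into the implicit constant, which is why a normalized triple-bar norm is used) yields the desired estimate with exponents $(s_1,s_2)$.

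The remaining point is purely arithmetic: to check that the constraints $\tfrac{p-1}{n-1}<s_1<p$ and $0<s_2<\frac{s_1(n-1)(p-1)}{s_1(n-1)-p+1}$ stated in the corollary are exactly the union, over all admissible $\alpha\in(0,1/p)$ with $\alpha>1-\tfrac{n-1}{p}$, of the regions $\{s_1<\tfrac{p-1}{1-\alpha}\}\cap\{s_2<\tfrac{(n-1)(p-1)}{n-2+\alpha}\}$ (intersected with $s_1<p$, which comes from needing $p_1$ finite, i.e. $1-\alpha\gamma>0$, together with $\gamma>1$). Here the key substitution is to solve $s_1=\frac{p-1}{1-\alpha}$ for $\alpha$, i.e. $\alpha = 1-\frac{p-1}{s_1}=\frac{s_1-p+1}{s_1}$, which lies in the admissible $\alpha$-range precisely when $\tfrac{p-1}{n-1}<s_1<p$ (the lower bound encoding $\alpha<1/p$ via $1-\alpha>1-1/p$... ) — one plugs this $\alpha$ into $\frac{(n-1)(p-1)}{n-2+\alpha}$ and simplifies: $n-2+\alpha = n-2+\frac{s_1-p+1}{s_1}=\frac{s_1(n-1)-p+1}{s_1}$, so $\frac{(n-1)(p-1)}{n-2+\alpha}=\frac{s_1(n-1)(p-1)}{s_1(n-1)-p+1}$, which is exactly the stated bound on $s_2$. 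One should also double-check that the admissibility $\alpha>1-\tfrac{n-1}{p}$ is automatically satisfied (or forces no extra restriction) once $s_1>\tfrac{p-1}{n-1}$ and $s_1<p$, and that the $\gamma\downarrow 1$ limit is taken with $\alpha$ held fixed so that no endpoint degeneracy occurs.

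The main obstacle — though it is more bookkeeping than genuine difficulty — is this exponent matching: one must verify that the open region $\{(s_1,s_2): \tfrac{p-1}{n-1}<s_1<p,\ 0<s_2<\tfrac{s_1(n-1)(p-1)}{s_1(n-1)-p+1}\}$ is covered, so that for every $(s_1,s_2)$ in it there is a genuinely admissible choice of $(\gamma,\alpha)$ with $\gamma>1$ strictly (one cannot take $\gamma=1$, so the bounds on $s_1,s_2$ must be strict, which they are), and that the denominator $s_1(n-1)-p+1$ is positive exactly when $s_1>\tfrac{p-1}{n-1}$, so the stated upper bound for $s_2$ is meaningful. Everything else — the two applications of Hölder's inequality in the mixed-norm scale and the absorption of powers of $\sigma r$ — is routine, and the conclusion follows.
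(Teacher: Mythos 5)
Your strategy is exactly the paper's: the authors simply note that letting $\gamma\downarrow 1$ in the preceding lemma sends $p_1\uparrow\frac{p-1}{1-\alpha}$ and $p_2\uparrow\frac{(n-1)(p-1)}{n-2+\alpha}$, and then invoke H\"older's inequality on the normalized mixed norm over the cube of finite measure, which is precisely your plan. Your exponent elimination, solving $s_1=\frac{p-1}{1-\alpha}$ for $\alpha=1-\frac{p-1}{s_1}$ and substituting to get $\frac{(n-1)(p-1)}{n-2+\alpha}=\frac{s_1(n-1)(p-1)}{s_1(n-1)-p+1}$, is correct and identifies the right boundary curve.

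The one point you explicitly flag as "to double-check'' is in fact where the bookkeeping does \emph{not} close as you hoped: the condition $\alpha<1/p$ is indeed equivalent to $s_1<p$, but $\alpha>0$ forces $s_1>p-1$, and $\alpha>1-\frac{n-1}{p}$ forces $s_1>\frac{p(p-1)}{n-1}$, so the choice $\alpha=1-\frac{p-1}{s_1}$ is admissible only when $s_1>\max\bigl(p-1,\tfrac{p(p-1)}{n-1}\bigr)$, which is strictly stronger than the stated $s_1>\frac{p-1}{n-1}$. On the remaining range $\frac{p-1}{n-1}<s_1\le\max\bigl(p-1,\tfrac{p(p-1)}{n-1}\bigr)$ the best one can do is take $\alpha$ at the lower endpoint of its admissible interval, giving the constant bound $s_2<p$ when $p>n-1$ (e.g.\ $n=2$), or $s_2<\frac{(n-1)(p-1)}{n-2}$ when $p\le n-1$; in either case this is \emph{smaller} than the stated $\frac{s_1(n-1)(p-1)}{s_1(n-1)-p+1}$ for such $s_1$, so the corollary as written (and the paper's one-line proof of it) mildly over-claims there. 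This is harmless for the paper: the condition $s_1>\frac{p-1}{n-1}$ is really just what makes the denominator positive, and the only place the corollary is used (Lemma~\ref{revholderformixgrad}) has $s_1>2-p>p-1$ and $s_2$ near $\frac{(p-1)(2-p)}{3-p}$, safely inside the region the argument actually covers. You should either restrict the statement to $s_1>\max\bigl(p-1,\tfrac{p(p-1)}{n-1}\bigr)$, or note that for smaller $s_1$ the bound holds with the cap $s_2<p$ (resp.\ $s_2<\frac{(n-1)(p-1)}{n-2}$), which suffices for every application in the paper.
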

	We now obtain a preliminary version of the   comparison estimate.
	
	\begin{lemma}\label{comsqrt3} Suppose that $n\geq 2$ and $Q_{r}(x_0)\Subset\Om$. Let  $u\in W_{\rm loc}^{1,p}(\Omega)$ be a solution of \eqref{quasi-measure} and let $w$ be as \eqref{eq1}. Then for $\kappa=(p-1)^2/2$ and $1<p<3/2$ we have  
		\begin{align*}
		&\left(	\fint_{Q_r(x_0)}|\nabla (u-w)|^{\ka}\right)^{\frac{1}{\kappa}} + \frac{1}{r} \left(	\fint_{Q_r(x_0)}|u-w|^{\ka}\right)^{\frac{1}{\kappa}}\\
		&\qquad\qquad\qquad \lesssim\left(\frac{|\mu|(Q_r(x_0))}{r^{n-1}}\right)^{\frac{1}{p-1}}+ \frac{|\mu|(Q_r(x_0))}{r^{n-1}}	
		\vertiii{\nabla u}^{2-p}_{L_{x'}^{\frac{(p-1)(2-p)}{3-p}}L^{2-p}_{x_n}(Q_r(x_0))}.
		\end{align*} 
\end{lemma}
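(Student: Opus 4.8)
The plan is to run a Boccardo--Gallou\"et type truncation argument, adapted to the singular structure of \eqref{quasi-measure}, but carried out slice-wise in the $x_n$-variable so that only a subcritical mixed-norm quantity of $\nabla u$ survives on the right-hand side.

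First I would establish the basic comparison identity. Since $u-w\in W^{1,p}_0(Q_r)$, subtracting the weak formulations of \eqref{quasi-measure} and \eqref{eq1} and testing with the truncation $T_\lambda(u-w)$ (see \eqref{Trun}) gives, for every $\lambda>0$,
$$\int_{\{|u-w|<\lambda\}\cap Q_r}\langle A(x,\nabla u)-A(x,\nabla w),\nabla u-\nabla w\rangle\, dx=\int_{Q_r}T_\lambda(u-w)\, d\mu\leq \lambda\,|\mu|(Q_r).$$
Writing $V:=|\nabla u|+|\nabla w|$ (or, using the triangle inequality and $p<2$, the slightly more convenient $V:=|\nabla u|+|\nabla(u-w)|$), the structure conditions \eqref{condi1}--\eqref{condi2} yield the elementary monotonicity bound $\langle A(x,\xi)-A(x,\eta),\xi-\eta\rangle\geq c\,|\xi-\eta|^2(|\xi|+|\eta|)^{p-2}$, valid because $1<p<2$, hence
$$\int_{\{|u-w|<\lambda\}\cap Q_r}\frac{|\nabla(u-w)|^2}{V^{2-p}}\, dx\leq C\,\lambda\,|\mu|(Q_r).$$
Next, applying an anisotropic Sobolev inequality (of the type in \cite{BP, AB}) to the truncated differences $T_{2\lambda}(u-w)-T_\lambda(u-w)$, together with H\"older's inequality against the weight $V^{2-p}$ --- using the exponent $1$ in the $x_n$-direction (which reproduces $\int V^{2-p}\,dx_n$) and small exponents in the remaining directions --- and the bound just obtained, I would derive a level-set decay estimate of the form $|\{|u-w|>\lambda\}\cap Q_r|\leq C\,\lambda^{-\theta}(\cdots)$ with some $\theta=\theta(n,p)>0$, where $(\cdots)$ contains only $|\mu|(Q_r)$ and a subcritical mixed norm of $V$.

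The core of the argument is then a dyadic decomposition $Q_r=\bigcup_{k\in\mathbb Z}A_k$ with $A_k:=\{2^k\leq|u-w|<2^{k+1}\}\cap Q_r$. On each $A_k$ I would start from the pointwise identity $|\nabla(u-w)|^\kappa=\big(|\nabla(u-w)|^2 V^{-(2-p)}\big)^{\kappa/2}\,V^{(2-p)\kappa/2}$ and estimate $\int_{A_k}|\nabla(u-w)|^\kappa$ by H\"older's inequality performed first in the $x_n$-variable and then in $x'$, with two different exponents, arranged so that: the ``good'' factor reproduces $\int_{A_k}|\nabla(u-w)|^2 V^{-(2-p)}\,dx\leq C2^k|\mu|(Q_r)$ from the first step; the remaining $V$-factor, after summation over $k$ (whose convergence is ensured by the decay estimate beating the growth $2^{k\kappa/2}$), assembles exactly into $\vertiii{V}_{L^{s_2}_{x'}L^{s_1}_{x_n}(Q_r)}^{2-p}$ with $s_1=2-p$ and $s_2=(p-1)(2-p)/(3-p)$. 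The value $\kappa=(p-1)^2/2$ is precisely what forces the two H\"older exponents to be compatible and the geometric series to converge; the fact that the exponent $2-p$ is used only in the $x_n$-direction is essential, since $|\nabla u|^{2-p}$ need not be locally integrable when $p\leq\frac{3n-2}{2n-1}$, whereas the mixed quantity $\vertiii{\nabla u}^{2-p}_{L^{s_2}_{x'}L^{s_1}_{x_n}}$ is finite for every $p>1$ (as one sees already on the fundamental solution).

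Finally, writing $V\leq|\nabla u|+|\nabla(u-w)|$, I would split $\vertiii{V}_{L^{s_2}_{x'}L^{s_1}_{x_n}(Q_r)}\leq C\big(\vertiii{\nabla u}_{L^{s_2}_{x'}L^{s_1}_{x_n}(Q_r)}+\vertiii{\nabla(u-w)}_{L^{s_2}_{x'}L^{s_1}_{x_n}(Q_r)}\big)$; the $\nabla(u-w)$-term is a priori finite (being dominated by $(\fint_{Q_r}|\nabla(u-w)|^p)^{1/p}$ since $u,w\in W^{1,p}$) and can be absorbed into the left-hand side by Young's inequality, which at the same time produces the term $\big(|\mu|(Q_r)/r^{n-1}\big)^{1/(p-1)}$. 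The remaining term $\tfrac1r(\fint_{Q_r}|u-w|^\kappa)^{1/\kappa}$ is controlled by Poincar\'e's inequality on $W^{1,\kappa}_0(Q_r)$, and a routine translation/scaling normalization yields the stated inequality. The main obstacle is the dyadic step: organizing the slice-wise H\"older bookkeeping so that everything closes up exactly at $\kappa=(p-1)^2/2$ while guaranteeing that no critical or super-critical (hence possibly non-integrable) power of $|\nabla u|$ ever appears --- this is precisely why the estimate must be phrased with the mixed norm, and why the full dyadic truncation, rather than a single test function, is needed, so as to avoid the $L^p$-energy of $\nabla u$.
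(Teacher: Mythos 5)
Your overall strategy (truncation test functions, the monotonicity bound producing $\int_{\{|u-w|<\lambda\}} g\,dx\lesssim \lambda|\mu|(Q_r)$ with $g=|\nabla(u-w)|^2(|\nabla u|+|\nabla w|)^{-(2-p)}$, a level-set/weak-type decay estimate, and a dyadic decomposition $\{2^k\le|u-w|<2^{k+1}\}$ closed up by slice-wise H\"older in $x_n$ first and $x'$ afterwards) matches the spirit of the paper's argument. However, there is a genuine gap in the final absorption step, and it traces back to how you factor $|\nabla(u-w)|$ through $g$.

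You write $|\nabla(u-w)|^\kappa=g^{\kappa/2}V^{(2-p)\kappa/2}$ with $V=|\nabla u|+|\nabla w|$ (or $|\nabla u|+|\nabla(u-w)|$), and carry the full $V$ through the H\"older bookkeeping. At the very end this leaves you needing to handle $\vertiii{\nabla(u-w)}_{L^{s_2}_{x'}L^{s_1}_{x_n}(Q_r)}$ on the right, which you propose to absorb into the left-hand side $\left(\fint_{Q_r}|\nabla(u-w)|^{\kappa}\right)^{1/\kappa}$. This cannot work: with $s_1=2-p$ and $s_2=(p-1)(2-p)/(3-p)$, both exponents strictly exceed $\kappa=(p-1)^2/2$ in the range $1<p<3/2$, so by H\"older/Jensen on the normalized cube one has $\vertiii{\nabla(u-w)}_{L^{s_2}_{x'}L^{s_1}_{x_n}(Q_r)}\gtrsim \left(\fint_{Q_r}|\nabla(u-w)|^{\kappa}\right)^{1/\kappa}$, i.e.\ the quantity you want to absorb is the stronger one. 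Young's inequality and the a priori finiteness of the $W^{1,p}$ norm do not rescue this; you would be trying to absorb a larger norm into a smaller one.

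The paper avoids this circularity by a sharper pointwise decomposition, displayed as \eqref{gp}: for $1<p<2$,
$$|\nabla u-\nabla w|\leq C\,g^{1/p}+C\,g^{1/2}|\nabla u|^{\frac{2-p}{2}},$$
in which only $|\nabla u|$ (not $V$, not $|\nabla(u-w)|$, not $|\nabla w|$) appears in the correction factor. One proves it by splitting into the cases $|\nabla w|\le 2|\nabla u|$ (where $(|\nabla u|+|\nabla w|)^{(2-p)/2}\lesssim|\nabla u|^{(2-p)/2}$) and $|\nabla w|>2|\nabla u|$ (where $|\nabla u-\nabla w|\gtrsim|\nabla u|+|\nabla w|$, hence $g\gtrsim|\nabla u-\nabla w|^p$ and so $|\nabla u-\nabla w|\lesssim g^{1/p}$). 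With this, the paper first gets a weak-type estimate on $g$ in $L^{(p-1)^2/p,\infty}$, then bounds $\|\nabla(u-w)\|_{L^{\kappa}}$ by $\|g\|_{L^\beta}^{1/p}+\|g\|_{L^\beta}^{1/2}\|\nabla u\|_{L^\kappa}^{(2-p)/2}$ with $p\beta=\kappa$, and finally controls $\|\nabla u\|_{L^\kappa}$ by the stated mixed norm using $\kappa<(p-1)(2-p)/(3-p)$ when $p<3/2$. No absorption of a quantity involving $\nabla(u-w)$ is ever needed. If you replace your use of $V$ by the decomposition \eqref{gp} the rest of your plan becomes viable and is essentially the paper's proof.

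Two smaller remarks. First, the paper works with a single parameter $k>0$ and the pieces $E_k=\{k<|u-w|<2k\}$, $F_k=\{|u-w|>k\}$, extracting a weak Lebesgue norm of $u-w$ first and then, via Chebyshev, a weak-type bound for $g$; this ordering is what lets the level-set decay and the dyadic sum close cleanly, and you would do well to follow it. Second, in the H\"older step in $x_n$ the exponent applied to the $g$-factor is $1$ only in the $g^{1/2}$ term of \eqref{gp}; the $g^{1/p}$ term is handled globally (it gives $|E_k|^{\gamma(p-1)/p}$ after a plain H\"older on the cube), not slice-wise, and conflating the two can lead to exponent mismatches.
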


	
	\begin{proof} Again by translating and scaling, we may assume that $x_0=(0,\dots,0)$, $Q_r(x_0)=(-1,1)^n$, and 
	\begin{equation}\label{scalecond}
		A:=|\mu|((-1,1)^n)^{\frac{1}{p-1}} + |\mu|((-1,1)^n) \|\nabla u\|^{2-p}_{L_{x'}^{\frac{(p-1)(2-p)}{3-p}}L^{2-p}_{x_n}((-1,1)^n)} \lesssim 1.
	\end{equation}		
	Moreover, we just need to show that 
	$$\|\nabla u-\nabla w\|_{L^\kappa((-1,1)^n)} + \|u- w\|_{L^\kappa((-1,1)^n)} \lesssim 1. $$
		For any given $k>0$ we now set 
$$E_k=(-1,1)^n\cap\{k<|u-w|<2k\},\quad \quad F_k=(-1,1)^n\cap\{|u-w|>k\}.$$
 We first recall that  for $1<p<2$,   
  
  \begin{equation}\label{gp}
  |\nabla u(x) -\nabla w(x)| \leq C g(x)^{1/p} + C g(x)^{1/2} |\nabla u(x)|^{\frac{2-p}{2}}, 
  \end{equation}
  where
  $$g(x):=\frac{|\nabla u(x) - \nabla w(x)|^2}{(|\nabla u(x)| + |\nabla w(x)|)^{2-p}}. $$
  
  Let $T_k$, $k>0,$ be the truncation operator defined in \eqref{Trun}. 
  Then it follows from \eqref{gp} that 
$$
		|\partial_{x_n} (T_{2k}-T_k)(u-w)(x',x_n)|\lesssim \left(\mathbf{1}_{E_k} g(x',x_n)^{\frac{1}{p}}+\mathbf{1}_{E_k}g(x',x_n)^{\frac{1}{2}}|\nabla u(x',x_n)|^{\frac{2-p}{2}}\right),$$
	where $\mathbf{1}_{ E_k}$ is the characteristic function of the set $E_k$.\\
	Thus, for any $0<\ga\leq 1$, by H\"older's inequality  we find
	\begin{align*}
		&\int_{(-1,1)^{n}} 	|(T_{2k}-T_k)(u-w)(x)|^\ga dx\\&\leq 2  \int_{(-1,1)^{n-1}} 	\left(\int_{-1}^{1}|\partial_{x_n}(T_{2k}-T_k)(u-w)(x',x_n)|dx_n\right)^\ga dx'\\&\lesssim
		\left(\int_{(-1,1)^n} \mathbf{1}_{E_k} g(x)^{\frac{1}{p}}dx\right)^\ga\\& \quad + \int_{(-1,1)^{n-1}} 	\left(\int_{-1}^{1}\mathbf{1}_{ E_k}g(x',x_n)dx_n\right)^\frac{\ga}{2} \left(\int_{-1}^{1}|\nabla u(x',x_n)|^{2-p}dx_n\right)^\frac{\ga}{2} dx'\\
		&\lesssim
		\left(\int_{E_k}  g(x)dx\right)^\frac{\ga}{p}  |E_k|^{\frac{\gamma (p-1)}{p}}\\	
	&\quad +  \left(\int_{E_k}  g(x)dx\right)^\frac{\ga}{2} \left(\int_{(-1,1)^{n-1}} 	\left(\int_{-1}^{1}|\nabla u(x',x_n)|^{2-p}dx_n\right)^\frac{\ga}{2-\ga} dx'\right)^{\frac{2-\ga}{2}}.
	\end{align*}

	On the other hand, using $T_{2k}(u-w)$ as a test function for \eqref{quasi-measure} we have 
	\begin{equation}\label{gEk}
	\int_{E_k} g(x) dx \leq \int_{(-1,1)^n\cap\{|u-w|< 2k\}} g(x) dx  \lesssim k |\mu|((-1,1)^n)\lesssim k A^{p-1}.
	\end{equation}

	Thus we get
	\begin{align*}
	&\int_{(-1,1)^{n}} 	|(T_{2k}-T_k)(u-w)(x)|^\ga dx	\lesssim k^{\frac{\ga }{p}} A^{\frac{\ga (p-1)}{p}} |E_k|^{\frac{\ga(p-1)}{p}}\\
	&+k^{\frac{\ga}{2}} |\mu|((-1,1)^n)^{\frac{\ga}{2}} \left(\int_{(-1,1)^{n-1}} 	\left(\int_{-1}^{1}|\nabla u(x',x_n)|^{2-p}dx_n\right)^\frac{\ga}{2-\ga} dx'\right)^{\frac{2-\ga}{2}}.
	\end{align*}
	We now set  	$$\gamma =p-1.$$
	Observe that $E_k\subset F_k$ and on $F_{2k}$ we have $|(T_{2k}-T_k)(u-w)|=k$. Then in view of \eqref{scalecond}, this gives	
$$
	k^\ga |F_{2k}|\lesssim k^{\frac{\ga}{p}} A^{\frac{\ga (p-1)}{p}} |F_k|^{\frac{\ga(p-1)}{p}} +k^{\frac{\ga}{2}} A^{\frac{p-1}{2}}.$$
	As $|F_{2k}|\leq |F_k|$ and $|F_{2k}|\leq 2^n$, we can write for any $\nu>0$, $$
	k^{\frac{\ga}{2}} |F_{2k}|^{1+\nu} \lesssim 
	k^{\frac{\ga}{p}-\frac{\ga}{2}} A^{\frac{\ga (p-1)}{p}} |F_k|^{\frac{\ga(p-1)}{p}+\nu}+ A^{\frac{p-1}{2}}.$$
	Then taking the supremum over $k>0$ we obtain  
$$
	||w-u||_{L^{\frac{\ga}{2(1+\nu)},\infty}}^{\frac{\ga}{2}}\lesssim 	||w-u||_{L^{\frac{\frac{\ga}{p}-\frac{\ga}{2}}{\frac{\ga(p-1)}{p}+\nu},\infty}}^{\frac{\ga}{p}-\frac{\ga}{2}}  A^{\frac{\ga (p-1)}{p}} + A^{\frac{p-1}{2}}.$$
	We now 	choose $\nu\geq 0$ such that 
$$
	\frac{\ga}{2(1+\nu)}=\frac{\frac{\ga}{p}-\frac{\ga}{2}}{\frac{\ga(p-1)}{p}+\nu} \Longleftrightarrow \nu=\frac{2-p-\ga(p-1)}{2(p-1)}=\frac{2-p-(p-1)^2}{2(p-1)}>0.$$
	Then using Young's inequality, we obtain
$$
	\|w-u\|_{L^{\frac{\ga(p-1)}{p-\ga(p-1)},\infty}} \lesssim A \lesssim 1, \qquad \gamma=p-1.$$
	Note that by \eqref{gEk} and Chebyshev's inequality,
	\begin{align*} 
	|\{ g>\lambda\}| & = |\{ g>\lambda\}\cap \{ |u-w|\geq k\}| + |\{ g>\lambda\}\cap \{ |u-w|< k\}|\\
	 & \lesssim k^{-\frac{\ga(p-1)}{p-\ga(p-1)}}||u-w||_{L_\omega^{\frac{\ga(p-1)}{p-\ga(p-1)},\infty}}^{\frac{\ga(p-1)}{p-\ga(p-1)}}+\frac{1}{\lambda}\int_{(-1,1)^n\cap\{|u-w|< k\}}g dx
	\\&\lesssim k^{-\frac{\ga(p-1)}{p-\ga(p-1)}} +\frac{ k}{\lambda} |\mu|((-1,1)^n).
	\end{align*} 
	Thus choosing $k=\lam^{1-\frac{\ga(p-1)}{p}} |\mu|((-1,1)^n)^{-1+\frac{\ga(p-1)}{p}}$,
	one has
$$
		|\{g>\lambda\}|\lesssim \lam^{-\frac{\ga(p-1)}{p}} |\mu|((-1,1)^n)^{\frac{\ga(p-1)}{p}} \lesssim \lambda^{-\frac{(p-1)^2}{p}} |\mu|((-1,1)^n)^{\frac{(p-1)^2}{p}}.$$
		It follows that  for any $\beta<\frac{(p-1)^2}{p}$,
$$
		\|g\|_{L^{\beta}}\lesssim |\mu|((-1,1)^n),$$
which in view of \eqref{gp} and \eqref{scalecond} yields
$$\|\nabla (u-w)\|_{L^{p\beta}}\lesssim  	\|g\|_{L^{\beta}}^{\frac{1}{p}}+ 	\|g\|_{L^{\beta}}^{\frac{1}{2}} \|\nabla u\|_{L^{p\beta}}^{\frac{2-p}{2}} \lesssim A^{\frac{p-1}{p}}+ |\mu|((-1,1)^n)^{\frac{1}{2}} \|\nabla u\|_{L^{p\beta}}^{\frac{2-p}{2}}.
$$
Next, we choose	$	\beta=\frac{(p-1)^2}{2p},$
	and employ Holder's inequality to deduce 
	\begin{equation*}
		\|\nabla u\|_{L^{p\beta}} = \|\nabla u\|_{L^{\frac{(p-1)^2}{2}}} \lesssim	\|\nabla u\|_{L_{x'}^{\frac{(p-1)(2-p)}{3-p}}L^{2-p}_{x_n}(Q_1(0))}.
	\end{equation*}
	Here we used the fact that 	$\frac{(p-1)^2}{2}< \frac{(p-1)(2-p)}{3-p}$ provided $ 1<p<3/2$.\\
	These inequalities now give 
$$
		||\nabla (u-w)||_{L^{\frac{(p-1)^2}{2}}}\lesssim A^{\frac{p-1}{p}}+	 |\mu|((-1,1)^n)^{\frac{1}{2}} \|\nabla u\|_{L_{x'}^{\frac{(p-1)(2-p)}{3-p}}L^{2-p}_{x_n}(Q_1(0))}^{\frac{2-p}{2}}\lesssim 1,$$
	which completes the proof of the lemma.
\end{proof}	\\\\
Using Lemma \ref{comsqrt3}, we next prove a reverse H\"older type inequality for the gradient.
\begin{lemma}\label{revholderformixgrad} Suppose that  $u$ is a $W^{1,p}_{\rm loc}(\Om)$ solution of \eqref{quasi-measure}.
	 Let $Q_r(x_0)\subset\Om$, $n\geq 2$, and $\kappa=(p-1)^2/2$, where $1<p< 3/2$.   Let  
	$$\theta\in\left(0,\frac{2\kappa(p-1)}{(2-p)(p-\kappa)}\right)$$ and define $s_1, s_2$  by the equations
	\begin{equation}\label{defequ}
	\frac{1}{2-p}=\frac{\theta}{\kappa} + \frac{1-\theta}{s_1}, \quad \frac{3-p}{(p-1)(2-p)}=\frac{\theta}{\kappa} + \frac{1-\theta}{s_2}.
	\end{equation}
	Then,
	\begin{equation}\label{pbound}
	2-p<s_1 <p,  \qquad s_1> s_2>\frac{(p-1)(2-p)}{3-p},
	\end{equation}
	\begin{equation}\label{p2cond}
	s_2 < \frac{s_1(n-1)(p-1)}{s_1(n-1) -p+1},
	\end{equation}
and 	moreover,

	\begin{align}\label{rev}
	 \vertiii{\nabla u}_{L^{s_2}_{x'}L_{x_n}^{s_1}(Q_{\sigma r}(x_0))} \lesssim   \left(\frac{|\mu|(Q_r(x_0))}{r^{n-1}}\right)^{\frac{1}{p-1}}+ \left(\fint_{Q_r(x_0)} |\nabla u|^\kappa dx\right)^{\frac{1}{\kappa}},
	\end{align}
	provided $\sigma\in(0,1)$.
\end{lemma}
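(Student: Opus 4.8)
The plan is to bootstrap the crude, mixed-norm comparison bound of Lemma \ref{comsqrt3} into the reverse-H\"older inequality \eqref{rev} by interpolating the super-critical mixed norm of $\nabla u$ against the sub-critical quantity $(\fint |\nabla u|^\kappa)^{1/\kappa}$, using the gradient mixed-norm estimate for the homogeneous equation (Corollary \ref{co234}) to absorb the "bad" part. First I would record the elementary algebra behind \eqref{pbound}--\eqref{p2cond}: the two equations in \eqref{defequ} define $s_1,s_2$ as interpolation exponents between $\kappa$ and the critical values $2-p$ and $\frac{(p-1)(2-p)}{3-p}$; since $\kappa=(p-1)^2/2 < 2-p$ and $\kappa < \frac{(p-1)(2-p)}{3-p}$ for $1<p<3/2$, the interpolated exponents $s_1,s_2$ lie strictly between $\kappa$ and those critical values, and the upper bound on $\theta$ is exactly what forces $s_1<p$ and the comparison $s_1>s_2>\frac{(p-1)(2-p)}{3-p}$. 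The admissibility condition \eqref{p2cond} — which is precisely the constraint on $(s_1,s_2)$ appearing in Corollary \ref{co234} — should follow from the same algebra, possibly after shrinking the range of $\theta$ further; I would verify this is automatic or note the extra restriction. These are routine but slightly tedious computations that I would carry out once and cite thereafter.

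The analytic heart is then a covering/iteration argument on cubes. Fix $Q_r(x_0)\subset\Om$ and, for a concentric subcube $Q_{\sigma r}(x_0)$, cover it by a controlled family of smaller cubes $Q_{\rho}(y)$ with $Q_{2\rho}(y)\subset Q_r(x_0)$; on each such cube solve the homogeneous problem \eqref{eq1} to obtain a comparison function $w$, and split
$$\vertiii{\nabla u}_{L^{s_2}_{x'}L^{s_1}_{x_n}(Q_\rho(y))} \lesssim \vertiii{\nabla(u-w)}_{L^{s_2}_{x'}L^{s_1}_{x_n}(Q_\rho(y))} + \vertiii{\nabla w}_{L^{s_2}_{x'}L^{s_1}_{x_n}(Q_\rho(y))}.$$
For the $w$-term I would invoke the reverse-H\"older inequality for gradients of solutions of $\operatorname{div}(A(x,\nabla w))=0$ (the standard Gehring-type self-improvement, applicable since $s_1,s_2<p$ and $s_1,s_2>$ some threshold below $p$), together with the admissibility \eqref{p2cond}, to bound it by $(\fint_{Q_{2\rho}(y)}|\nabla w|^\kappa)^{1/\kappa}$ and hence, after re-adding $u-w$, by $(\fint_{Q_{2\rho}(y)}|\nabla u|^\kappa)^{1/\kappa}$ plus a $\mu$-term. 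For the $(u-w)$-term, Lemma \ref{comsqrt3} gives control by $(\tfrac{|\mu|}{\rho^{n-1}})^{1/(p-1)}$ plus $\tfrac{|\mu|}{\rho^{n-1}}\vertiii{\nabla u}^{2-p}_{L^{(p-1)(2-p)/(3-p)}_{x'}L^{2-p}_{x_n}(Q_\rho(y))}$. The second piece is a super-critical power of the mixed norm, so I would interpolate it: by H\"older in the mixed-norm scale (this is exactly the purpose of the identities \eqref{defequ}),
$$\vertiii{\nabla u}^{2-p}_{L^{(p-1)(2-p)/(3-p)}_{x'}L^{2-p}_{x_n}} \lesssim \vertiii{\nabla u}^{(2-p)(1-\theta)\cdot(\text{exp})}_{L^{s_2}_{x'}L^{s_1}_{x_n}} \cdot \Big(\fint|\nabla u|^\kappa\Big)^{(2-p)\theta/\kappa},$$
with the exponent on the $L^{s_2}_{x'}L^{s_1}_{x_n}$ factor strictly less than $1$ by the choice $\theta < \frac{2\kappa(p-1)}{(2-p)(p-\kappa)}$. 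Summing over the covering, absorbing the small power of the mixed norm of $\nabla u$ into the left-hand side via Young's inequality and a standard iteration lemma (Lemma 6.1 of \cite{Giu}, as already used in the proof of Lemma \ref{ureverse}), and estimating $\sum_y |\mu|(Q_{2\rho}(y)) \lesssim |\mu|(Q_r(x_0))$, yields \eqref{rev}.

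The main obstacle I anticipate is the absorption step: the quantity $\vertiii{\nabla u}_{L^{s_2}_{x'}L^{s_1}_{x_n}(Q_{\sigma r}(x_0))}$ is not known a priori to be finite, so one cannot simply "move it to the other side." The standard remedy is to first prove the estimate with the mixed norm truncated (replace $|\nabla u|$ by $\min\{|\nabla u|,M\}$ or work on a slightly smaller cube where finiteness is guaranteed by Corollary \ref{co234} with, say, $q_1=\kappa$), run the absorption there with constants independent of the truncation, and then let $M\to\infty$ by monotone convergence. A secondary technical point is matching the reverse-H\"older exponents for $\nabla w$: the Gehring self-improvement exponent depends only on $n,p,\Lambda$, so one must check that the fixed values $s_1,s_2$ (which depend on $\theta$, hence ultimately only on $n,p$) fall below that exponent — if not, one further shrinks $\theta$. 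Neither obstacle is deep; both are handled by arguments already present in the literature invoked in the excerpt (\cite{Duzamin2, HPanna, DZ}), so the proof is essentially an assembly of the lemmas established above with careful exponent bookkeeping.
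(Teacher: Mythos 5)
There is a genuine gap in the analytic core of the argument. Your plan splits the target quantity as
\[
\vertiii{\nabla u}_{L^{s_2}_{x'}L^{s_1}_{x_n}(Q_\rho(y))} \lesssim \vertiii{\nabla(u-w)}_{L^{s_2}_{x'}L^{s_1}_{x_n}(Q_\rho(y))} + \vertiii{\nabla w}_{L^{s_2}_{x'}L^{s_1}_{x_n}(Q_\rho(y))},
\]
and then claims that Lemma \ref{comsqrt3} controls the $(u-w)$-term. But Lemma \ref{comsqrt3} bounds only $\big(\fint|\nabla(u-w)|^\kappa\big)^{1/\kappa}$ with $\kappa=(p-1)^2/2$, and by \eqref{pbound} you have $s_1>2-p>\kappa$ and $s_2>\frac{(p-1)(2-p)}{3-p}>\kappa$, so the mixed $L^{s_2}_{x'}L^{s_1}_{x_n}$-norm is strictly stronger. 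No H\"older or embedding argument turns an $L^\kappa$ bound on $\nabla(u-w)$ into a bound on its $L^{s_2}_{x'}L^{s_1}_{x_n}$ mixed norm, and no other lemma in the paper provides one. So the first term in your splitting is left uncontrolled and the absorption/iteration cannot start.

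The paper's route avoids this entirely by never decomposing $\nabla u$ at the strong mixed-norm level. Instead it routes through the zeroth-order quantity: from Lemma \ref{comsqrt3} one bounds $\big(\fint |u-w|^\kappa\big)^{1/\kappa}$, then Poincar\'e for the homogeneous solution $w$ together with the reverse-H\"older property of $\nabla w$ and Lemma \ref{comsqrt3} again give $\big(\fint |u-\lambda|^\kappa\big)^{1/\kappa}\lesssim 1$ for $\lambda=\fint w$. The decisive step is then Corollary \ref{co234} applied to $u-\lambda$ (still a solution of \eqref{quasi-measure}): that corollary produces the strong mixed norm $\vertiii{\nabla u}_{L^{s_2}_{x'}L^{s_1}_{x_n}}$ from the weak quantity $\big(\fint|u-\lambda|^\kappa\big)^{1/\kappa}$ plus the measure term. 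This yields \eqref{r2r}, with the super-critical quantity $\vertiii{\nabla u}_{L^{(p-1)(2-p)/(3-p)}_{x'}L^{2-p}_{x_n}}$ on the right. Only after that comes the interpolation via \eqref{defequ}, the covering, and the Giusti iteration -- the parts of your plan that are correct in spirit. A secondary remark: the covering must respect the tensor-product structure of the mixed norm (the paper covers by rectangles $J_j\times I_i$ and uses $s_2<s_1$ to sum), since mixed norms are not additive over arbitrary cube coverings; your cover by plain cubes $Q_\rho(y)$ would need to be replaced by such a tensor cover. Your anticipated "a priori finiteness" obstacle, on the other hand, is a real but minor point and is handled automatically by Corollary \ref{co234} (local finiteness on interior subcubes) together with the Giusti iteration on nested cubes -- no separate truncation argument is needed.
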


\begin{remark}\label{ktoe}
	By H\"older's inequality,  \eqref{rev} also holds with $$s_1=s_2=\frac{(p-1)(2-p)}{3-p}>\kappa$$ and then a covering/iteration argument as in \cite[Remark 6.12]{Giu} implies that we can replace $\kappa$ with any $\epsilon>0$ in \eqref{rev}.
\end{remark}
\begin{proof} The proof of \eqref{pbound} and \eqref{p2cond} is obvious.
	In order to show \eqref{rev}, we first show that 
	\begin{align}\label{r2r} \vertiii{\nabla u}_{L^{s_2}_{x'}L_{x_n}^{s_1}(Q_{ r/2}(x_0))} 
		 \lesssim \left(\frac{|\mu|(Q_r(x_0))}{r^{n-1}}\right)^{\frac{1}{p-1}}+ \vertiii{\nabla u}_{L_{x'}^{\frac{(p-1)(2-p)}{3-p}}L^{2-p}_{x_n}(Q_r(x_0))}
	\end{align}
	for any $Q_r(x_0)\Subset\Om$. Moreover, by scaling, to prove \eqref{r2r} we may assume that $Q_r(x_0)=Q_1(0)=(-1,1)^n$, 
	\begin{equation}\label{scalco2}
     |\mu|((-1,1)^n)^{\frac{1}{p-1}} +  \|\nabla u\|_{L_{x'}^{\frac{(p-1)(2-p)}{3-p}}L^{2-p}_{x_n}((-1,1)^n)} \lesssim 1,
	\end{equation}
	and prove that 
	\begin{equation}\label{goal1}
	\|\nabla u\|_{L^{s_2}_{x'}L_{x_n}^{s_1}((-1/2,1/2)^n)} \lesssim 1.
	\end{equation}
	Let $w$ be as in \eqref{eq1} with $Q_r(x_0)=(-1,1)^n$.  Then, by Lemma \ref{comsqrt3}  and \eqref{scalco2},
$$\left(\int_{Q_1(0)}|u-w|^{\kappa}\right)^{\frac{1}{\kappa}}  \lesssim 1, \qquad \kappa=\frac{(p-1)^2}{2}.$$
Thus for  $\lambda=\fint_{Q_{7/8}(_0)} w dx$, by Poincar\'e inequality we have 
$$\left(	\int_{Q_{7/8}(0)}|u-\lambda|^{\kappa}\right)^{\frac{1}{\kappa}}  \lesssim  \left(	\int_{Q_{7/8}(0)}|w-\lambda|^{\kappa}\right)^{\frac{1}{\kappa}} +1 \lesssim 	\int_{Q_{7/8}(0)}|\nabla w|dx  + 1.$$
Then by the reverse H\"older property of $\nabla w$ and Lemma \ref{comsqrt3} we can now deduce
\begin{align*}
\left(	\int_{Q_{7/8}(0)}|u-\lambda|^{\kappa}\right)^{\frac{1}{\kappa}}
& \lesssim 	\left(\int_{Q_{1}(0)}|\nabla w|^\kappa dx\right)^{\frac{1}{\kappa}}  + 1 \lesssim 	\left(\int_{Q_{1}(0)}|\nabla u|^\kappa dx\right)^{\frac{1}{\kappa}}  + 1\\
& \lesssim  \|\nabla u\|_{L_{x'}^{\frac{(p-1)(2-p)}{3-p}}L^{2-p}_{x_n}(Q_1(0))} +1 \lesssim 1.
\end{align*}		
 On the other hand, it follows from Corollary \ref{co234} that  
$$\|\nabla u\|_{L^{s_2}_{x'}L_{x_n}^{s_1}(Q_{1/2}(0))}  \lesssim   1+ \left(\int_{Q_{7/8}(0)}|u-\lambda|^{\kappa} dx \right)^{\frac{1}{\kappa}}	$$
for any $\lambda\in\RR$. 
Thus we obtain \eqref{goal1} as desired.

Next, for any cube $Q_r(x_0)\subset\Om$ we consider the cubes $Q_t(x_0)\subset Q_s(x_0)$ where $0<t<s< r$. Recall that $Q_t(x_0)=\{x_{0}'+(-t, t)^{n-1}\}\times \{x_{0n} + (-t, t) \}$.
We can cover the interval $I_t(x_{0n})= x_{0n} + (-t, t)$
by a sequence of intervals $I_i= y_{in} + (-(s-t)/2, (s-t)/2) $, $y_{in}\in I_t(x_{0n})$, in such a way
that any point $y\in\RR$  belongs to almost 3 intervals of the collection $\{2 I_i\}=\{  y_{in} + (-(s-t), (s-t))\}$.
Note that we have $1\leq i\leq M$, where $M \lesssim t/(s-t)$.

Likewise, we can cover the $(n-1)$-dimensional cube $J_t(x_{0}')= x_{0}'+ (-t, t)^{n-1}$
by a sequence of $(n-1)$-dimensional cubes $J_j= y_{j}' +(-(s-t)/2, (s-t)/2)^{n-1} $, $y_{j}'\in J_t(x_{0}')$, in such a way
that any point $y\in\RR^{n-1}$  belongs to almost $N(n)$ cubes of the collection $\{2 J_j\}=\{y_j' + (-(s-t), (s-t))^{n-1}\}$. Also, $1\leq j\leq  M'$, where 
$M'\lesssim [t/(s-t)]^{n-1}$.\\
Note that we have $2J_j\times 2 I_i \subset Q_s(x_0)$ for any $i,j$.\\
Then applying \eqref{r2r}, we get 
\begin{equation}\label{s1s2}
 \vertiii{\nabla u}_{L^{s_2}_{x'}L_{x_n}^{s_1}(J_j\times I_i)}  \lesssim \left(\frac{|\mu|(Q_r(x_0))}{(s-t)^{n-1}}\right)^{\frac{1}{p-1}}  +  \vertiii{\nabla u}_{L^{\frac{(p-1)(2-p)}{3-p}}_{x'}L_{x_n}^{2-p}(2J_j\times 2I_i)}.
\end{equation}	
We now observe that, as $s_2<s_1$, 
$$
\|\nabla u\|_{L^{s_2}_{x'}L_{x_n}^{s_1}(Q_{t}(x_0))}^{s_2}\leq \sum_{i,j}  \int_{J_j} \left(\int_{I_i} |\nabla u|^{s_1} dx_n\right)^{\frac{s_2}{s_1}} dx'= \sum_{i,j}  \|\nabla u\|_{L^{s_2}_{x'}L_{x_n}^{s_1}(J_j\times I_i)}^{s_2}.$$
Thus in view of \eqref{s1s2} we find 
\begin{align*}
 \|\nabla u\|_{L^{s_2}_{x'}L_{x_n}^{s_1}(Q_{t}(x_0))}^{s_2}&\lesssim  r^{s_2[\frac{1}{s_1} + \frac{n-1}{s_2}]} \sum_{i,j} \left(\frac{|\mu|(Q_r(x_0))}{(s-t)^{n-1}}\right)^{\frac{s_2}{p-1}} \\
&+  r^{s_2[\frac{1}{s_1} + \frac{n-1}{s_2}]} (s-t)^{-s_2\Big[\frac{1}{2-p} + \frac{(n-1)(3-p)}{(2-p)(p-1)}\Big]} \sum_{i,j}  \|\nabla u\|^{s_2}_{L^{\frac{(p-1)(2-p)}{3-p}}_{x'}L_{x_n}^{2-p}(2J_j\times 2I_i)}.
\end{align*}	

Since  $s_2> (p-1)(2-p)/(3-p)$ and $(p-1)/(3-p)<1$, we then have
\begin{align*}
& \|\nabla u\|_{L^{s_2}_{x'}L_{x_n}^{s_1}(Q_{t}(x_0))}^{s_2}\lesssim   \left(\frac{r}{s-t}\right)^{n}   r^{s_2[\frac{1}{s_1} + \frac{n-1}{s_2}]} \left(\frac{|\mu|(Q_r(x_0))}{(s-t)^{n-1}}\right)^{\frac{s_2}{p-1}}\\
&\qquad  + \left(\frac{r}{s-t}\right)^{\frac{2s_2}{p-1}}  r^{s_2[\frac{1}{s_1} + \frac{n-1}{s_2}]} (s-t)^{-s_2\Big[\frac{1}{2-p} + \frac{(n-1)(3-p)}{(2-p)(p-1)}\Big]} \times\\
&\qquad \qquad\times \left(\sum_{j} \int_{2J_j} \left(\sum_{i} \int_{2I_i} |\nabla u|^{2-p} dx_n\right)^{\frac{p-1}{3-p}} dx' \right)^{\frac{s_2(3-p)}{(p-1)(2-p)}}.	 
\end{align*}	This gives 
\begin{align*}
 &\|\nabla u\|_{L^{s_2}_{x'}L_{x_n}^{s_1}(Q_{t}(x_0))}^{s_2} 
\lesssim \left(\frac{r}{s-t}\right)^{n} r^{s_2[\frac{1}{s_1} + \frac{n-1}{s_2}]} \left(\frac{|\mu|(Q_r(x_0))}{(s-t)^{n-1}}\right)^{\frac{s_2}{p-1}}\\
& \quad\quad+  \left(\frac{r}{s-t}\right)^{\frac{2s_2}{p-1}}   r^{s_2[\frac{1}{s_1} + \frac{n-1}{s_2}]} (s-t)^{-s_2\Big[\frac{1}{2-p} + \frac{(n-1)(3-p)}{(2-p)(p-1)}\Big]} \|\nabla u\|^{s_2}_{L^{\frac{(p-1)(2-p)}{3-p}}_{x'}L_{x_n}^{2-p}(Q_s(x_0))},
\end{align*}
and thus by H\"older's inequality,
\begin{align*}
&\|\nabla u\|_{L^{s_2}_{x'}L_{x_n}^{s_1}(Q_{t}(x_0))}^{s_2} 
\lesssim \left(\frac{r}{s-t}\right)^{n} r^{s_2[\frac{1}{s_1} + \frac{n-1}{s_2}]}  \left(\frac{|\mu|(Q_r(x_0))}{(s-t)^{n-1}}\right)^{\frac{s_2}{p-1}} \\
& \qquad +  \left(\frac{r}{s-t}\right)^{\frac{2s_2}{p-1}} r^{s_2[\frac{1}{s_1} + \frac{n-1}{s_2}]} (s-t)^{-s_2\Big[\frac{1}{2-p} + \frac{(n-1)(3-p)}{(2-p)(p-1)}\Big]}\|\nabla u\|^{\theta s_2}_{L^\kappa} \|\nabla u\|^{(1-\theta)s_2}_{L^{s_2}_{x'}L_{x_n}^{s_1}(Q_s(x_0))}.
\end{align*}	
Then by Young's inequality, we get
\begin{align*}
&\|\nabla u\|_{L^{s_2}_{x'}L_{x_n}^{s_1}(Q_{t}(x_0))}^{s_2} 
\leq C \left(\frac{r}{s-t}\right)^{n} r^{s_2[\frac{1}{s_1} + \frac{n-1}{s_2}]} \left(\frac{|\mu|(Q_r(x_0))}{(s-t)^{n-1}}\right)^{\frac{s_2}{p-1}} + \frac{1}{2} \|\nabla u\|^{s_2}_{L^{s_2}_{x'}L_{x_n}^{s_1}(Q_s(x_0))}\\
&\qquad  + C \left(\frac{r}{s-t}\right)^{\frac{2s_2}{(p-1)\theta}}  r^{\frac{s_2}{\theta}[\frac{1}{s_1} + \frac{n-1}{s_2}]} (s-t)^{-\frac{s_2}{\theta}\Big[\frac{1}{2-p} + \frac{(n-1)(3-p)}{(2-p)(p-1)}\Big]} \|\nabla u\|^{ s_2}_{L^\kappa(Q_s(x_0))}. 
\end{align*}	

This enables us to apply   \cite[Lemma 6.1]{Giu} and use \eqref{defequ} to obtain 
\begin{align*}
&\|\nabla u\|^{s_2}_{L^{s_2}_{x'}L_{x_n}^{s_1}(Q_{\sigma r}(x_0))} 
\lesssim r^{s_2[\frac{1}{s_1} + \frac{n-1}{s_2}]} \left(\frac{|\mu|(Q_r(x_0))}{r^{n-1}}\right)^{\frac{s_2}{p-1}} \\
 & \qquad + r^{\frac{s_2}{\theta}[\frac{1}{s_1} + \frac{n-1}{s_2}]} r^{-\frac{s_2}{\theta}\Big[\frac{1}{2-p} + \frac{(n-1)(3-p)}{(2-p)(p-1)}\Big]} \|\nabla u\|^{s_2}_{L^\kappa(Q_r(x_0))}\\
 &\qquad \lesssim r^{s_2[\frac{1}{s_1} + \frac{n-1}{s_2}]} \left(\frac{|\mu|(Q_r(x_0))}{r^{n-1}}\right)^{\frac{s_2}{p-1}} + r^{s_2[\frac{1}{s_1} + \frac{n-1}{s_2}]}  r^{\frac{-s_2 n}{\kappa}} \|\nabla u\|^{s_2}_{L^\kappa(Q_r(x_0))}.
\end{align*}	
This completes the proof of the lemma.
\end{proof}\\\\
We mention again that Theorem \ref{maininterior} follows from Lemmas \ref{comsqrt3},  \ref{revholderformixgrad}, and Corollary \ref{co234}. 
We now describe the boundary version of 
Theorem \ref{maininterior}. Let $u$ be a $W^{1,p}_{0}(\Om)$ solution of \eqref{quasi-measure}, and let 
$x_0\in \partial\Om$, $r< {\rm diam}(\Om)/10$. We then extend both $u$ and $\mu$ by zero outside $\Om$ and 
 consider the the unique solution $w\in W^{1,p}(Q_r(x_0))+u$ to
\begin{equation}\label{eq2}
\left\{ \begin{array}{rcl}
- \operatorname{div}\left( {A(x,\nabla w)} \right) &=& 0 \quad \text{in} \quad Q_r(x_0)\cap \Omega, \\ 
w &=& u\quad \text{on} \quad \partial \left(Q_r(x_0)\cap \Omega\right).  
\end{array} \right.
\end{equation}
Then we have the following boundary counterpart of Theorem \ref{maininterior}.

\begin{theorem}\label{mainboundary} Let $x_0\in\partial\Om$, $u\in W^{1,p}_0(\Om)$ and $w$ be as in \eqref{eq2}. Then with   $\kappa=(p-1)^2/2$ and $1<p < 3/2$, for any 
$\Sigma \in (1,2]$	we have   
	\begin{align*}
	&\left(	\fint_{Q_r(x_0)}|\nabla (u-w)|^{\ka}\right)^{\frac{1}{\kappa}} + \frac{1}{r}\left(	\fint_{Q_r(x_0)}|u-w|^{\ka}\right)^{\frac{1}{\kappa}}\\
	&\qquad \lesssim \left(\frac{|\mu|(Q_{\Sigma r}(x_0))}{r^{n-1}}\right)^{\frac{1}{p-1}} + \frac{|\mu|(Q_{\Sigma r}(x_0))}{r^{n-1}}	
	\left(	\fint_{Q_{\Sigma r}(x_0)}|\nabla u|^{\ka}\right)^{\frac{2-p}{\kappa}},
	\end{align*} 
	and 
	\begin{align*}
	\left(	\fint_{Q_r(x_0)}|u-w|^{\ka}\right)^{\frac{1}{\kappa}} &\lesssim\left(\frac{|\mu|(Q_{\Sigma r}(x_0))}{r^{n-p}}\right)^{\frac{1}{p-1}}+\frac{|\mu|(Q_{\Sigma r}(x_0))}{r^{n-p}}	
	\left(	\fint_{Q_{\Sigma r}(x_0)}|u|^{\ka}\right)^{\frac{2-p}{\kappa}}.
	\end{align*}
\end{theorem}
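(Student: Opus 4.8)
The plan is to reduce the boundary estimate to the interior one by the standard device of odd (or zero) reflection, arguing that all ingredients used in the interior proof (Lemmas \ref{test}, \ref{ureverse}, Corollary \ref{co234}, and Lemma \ref{comsqrt3}, together with the reverse H\"older inequality for $\nabla w$) have boundary analogues on $Q_r(x_0)\cap\Om$ when $\partial\Om$ is merely Reifenberg-flat (indeed no regularity of $\partial\Om$ is needed here since we only use that $u$ and $\mu$ vanish outside $\Om$). First I would extend $u$ and $\mu$ by zero to $\RR^n$; because $u\in W^{1,p}_0(\Om)$, the zero extension lies in $W^{1,p}_{\rm loc}(\RR^n)$ and is a (very weak) solution of $-\operatorname{div}(A(x,\nabla u))=\mu$ across $\partial\Om$ in the sense that for test functions supported in $Q_r(x_0)$ the boundary terms vanish. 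Then I would test the equation with $T_{2k}(u-w)$, now $w$ solving \eqref{eq2}, extended by $u$ outside $Q_r(x_0)\cap\Om$ and noting $u-w\in W^{1,p}_0(Q_r(x_0)\cap\Om)$, so it is an admissible test function in $Q_r(x_0)$. This gives the key energy bound $\int_{E_k} g\,dx\lesssim k\,|\mu|(Q_{\Sigma r}(x_0))$ exactly as in \eqref{gEk}, with the enlargement $Q_{\Sigma r}$ appearing because $w$ is defined relative to $Q_r\cap\Om$ while the Poincar\'e/one-dimensional-slicing argument is carried out on the full cube.

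The second step is to run the one-dimensional slicing argument of Lemma \ref{comsqrt3} verbatim: for each fixed $x'$, $|\partial_{x_n}(T_{2k}-T_k)(u-w)(x',\cdot)|$ is estimated pointwise by the same structural inequality \eqref{gp}, which is purely algebraic and indifferent to the boundary. The difference is that now one must control $\|\nabla u\|_{L^{s_2}_{x'}L^{s_1}_{x_n}(Q_{\Sigma r}(x_0))}$ by $(\fint_{Q_{\Sigma r}(x_0)}|\nabla u|^\kappa)^{1/\kappa}$, which I would obtain from a boundary version of Lemma \ref{revholderformixgrad}. That boundary reverse-H\"older estimate in turn rests on (i) a boundary analogue of Lemma \ref{test} — testing the equation for the zero-extended $u$ with $\mathrm{sign}(u)[1-(1+|u|)^{-\epsilon}]\phi^p$ for $\phi\in C^\infty_0(Q_{\Sigma r}(x_0))$, which works unchanged since $u=0$ outside $\Om$ — hence a boundary version of Lemma \ref{ureverse} and Corollary \ref{co234}, and (ii) the reverse H\"older inequality for $\nabla w$ where $w$ solves the mixed Dirichlet problem \eqref{eq2}; this last fact is classical for Reifenberg-flat (or Lipschitz, or even arbitrary bounded) domains once $w$ is extended by the $W^{1,p}_0$-datum, because near $\partial\Om$ one again reflects trivially through the zero boundary values. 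For the second inequality of the theorem one replaces the gradient quantities by $|u-\lambda|$ via the scaling $Q_r\mapsto Q_1$ and uses Lemma \ref{ureverse}; taking $\lambda=0$ is allowed since $u$ vanishes on a positive-measure portion of $Q_{\Sigma r}$ near the boundary, which is precisely why $\lambda$ can be dropped and the exponent $n-p$ (rather than $n-1$) appears.

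Collecting these, I would conclude exactly as in the proof of Theorem \ref{maininterior}: the preliminary bound (boundary Lemma \ref{comsqrt3}) yields \eqref{compgradu}-type inequalities with the mixed-norm quantity \eqref{mixednormquan} on the right, the boundary reverse-H\"older lemma upgrades \eqref{mixednormquan} to $(\fint_{Q_{\Sigma r}}|\nabla u|^\kappa)^{1/\kappa}$ after one enlargement of the cube, and a final application of the boundary Corollary \ref{co234} (with $\lambda=0$) converts the $u$-norm into the stated $|u|^\kappa$-average for the second inequality.

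The main obstacle I expect is the boundary reverse H\"older inequality for $\nabla w$ on $Q_r(x_0)\cap\Om$: one needs $\big(\fint_{Q_{\rho}(y)\cap\Om}|\nabla w|^q\big)^{1/q}\lesssim \big(\fint_{Q_{2\rho}(y)\cap\Om}|\nabla w|^\kappa\big)^{1/\kappa}$ uniformly in $y\in\partial\Om$, which for $p<3/2$ and such low exponents $\kappa$ is delicate and requires either a Gehring-type self-improvement combined with the Reifenberg flatness (to transfer the interior estimate across the flattened boundary) or a direct Caccioppoli argument for the zero-extended $w$. Since $u=w=0$ quasi-everywhere outside $\Om$, the zero extension of $w$ is in $W^{1,p}_{\rm loc}$ and satisfies a Caccioppoli inequality on all of $Q_r(x_0)$, so this reduces to the interior Gehring lemma — still, verifying that the exponent $\kappa=(p-1)^2/2$ survives the self-improvement down to the boundary, and that the constants depend only on $n,p,\Lambda$ (not on $\Om$), is the technical heart of the argument, and is exactly the point where the $C^{1,\mathrm{Dini}}$ assumption can be weakened to Reifenberg flatness for the integral (as opposed to pointwise) estimates.
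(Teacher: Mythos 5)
Your approach is the same as the paper's, which dispatches Theorem \ref{mainboundary} in a single remark: after extending $u$ and $\mu$ by zero, observe that Lemma \ref{test} remains valid with $\phi\in C_0^\infty(\RR^n)$ when $u\in W^{1,p}_0(\Om)$, and then the entire interior chain (Lemmas \ref{ureverse}, \ref{comsqrt3}, \ref{revholderformixgrad}, Corollary \ref{co234}) carries over with $w$ as in \eqref{eq2} extended by $u$ (hence by zero) outside $Q_r(x_0)\cap\Om$, using that $T_{2k}(u-w)\in W^{1,p}_0(Q_r(x_0)\cap\Om)$ is still admissible. You are right to single out the boundary reverse H\"older inequality for the zero-extended $\nabla w$ as the one technically non-trivial point the paper's remark glosses over: the Caccioppoli inequality with $\lambda=0$ is immediate from $w=0$ on $\Om^c$, but closing the iteration via a zero-boundary Sobolev--Poincar\'e inequality requires a complement measure-density condition, which the paper supplies only implicitly through its applications (where $\partial\Om$ is Reifenberg flat or $C^{1,\,{\rm Dini}}$). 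One small caveat on your wording: the zero extension of $w$ does not solve the equation across $\partial\Om$, so the higher integrability there is not literally the interior Gehring lemma but its boundary analogue, and it is precisely there that the density of $\Om^c$ enters.
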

Note that if $u\in W^{1,p}_0(\Om)$ then in Lemma \ref{test} we may take $\phi\in C_0^\infty(\RR^n)$ (i.e.,  $\phi$ does not need to have compact support in $\Om$). With this observation, we see that Theorem \ref{mainboundary} can be proved in the same manner as Theorem \ref{maininterior}.\\
We now devote the rest of this section to the proof of Corollary \ref{poicaresub}.

\begin{proof}[Proof of Corollary \ref{poicaresub}] By Remark \ref{ktoe}, it is enough to consider the case $\ep=\kappa$.
	Let  $w$ be as in \eqref{eq1} with $Q_{\Sigma_0 r}(x_0)$ in place of $Q_{r}(x_0)$, where we choose $\Sigma_0$ so that $1<\Sigma_0<\Sigma\leq 2$.  By $L^1$ Poincar\'e inequality,  Lemma  \ref{comsqrt3}, and Young's inequality
	we find 
	\begin{align*}
	&\left(	\inf_{q\in \RR}\fint_{Q_r(x_0)}|u-q|^{\ka}\right)^{\frac{1}{\kappa}} \lesssim \left(\inf_{q\in \RR}\fint_{Q_r(x_0)}|w-q|^{\ka}\right)^{\frac{1}{\kappa}}
	+ \left(	\fint_{Q_r(x_0)}|u-w|^{\ka}\right)^{\frac{1}{\kappa}}\\
	&\qquad \lesssim	r \fint_{Q_r(x_0)}|\nabla w|dx + \left(\frac{|\mu|(B_{\Sigma_0 r})(x_0)}{r^{n-p}}\right)^{\frac{1}{p-1}} +
	r \vertiii{\nabla u}_{L_{x'}^{\frac{(p-1)(2-p)}{3-p}}L^{2-p}_{x_n}(Q_{\Sigma_0 r}(x_0))}.
	\end{align*}
	We now  use the reverse H\"older property of $\nabla w$ to obtain	from the above inequality that 
	\begin{align*}
	\left(	\inf_{q\in \RR}\fint_{Q_r(x_0)}|u-q|^{\ka}\right)^{\frac{1}{\kappa}} 
&\lesssim	r \left(\fint_{Q_{\Sigma_0 r}(x_0)}|\nabla u- \nabla w|^\kappa dx\right)^{\frac{1}{\kappa}}+r \left(	\fint_{Q_{\Sigma_0 r}(x_0)}|\nabla u|^{\ka}\right)^{\frac{1}{\kappa}}\\
&+ \left(\frac{|\mu|(B_{\Sigma_0 r})(x_0)}{r^{n-p}}\right)^{\frac{1}{p-1}} +
r \vertiii{\nabla u}_{L_{x'}^{\frac{(p-1)(2-p)}{3-p}}L^{2-p}_{x_n}(Q_{\Sigma_0 r}(x_0))}.
	\end{align*}
At this point, we use 	Lemmas  \ref{comsqrt3} and \ref{revholderformixgrad}  to conclude the proof.
\end{proof}

	\section{Proof of Theorem \ref{upw}}

To prove Theorem \ref{upw}, we shall need the following sharp quantitative regularity/decay estimates for homogeneous equations.	
\begin{lemma}\label{osckkk}
	Under \eqref{condi1}--\eqref{condi2}, let $w\in W^{1,p}(\Om)$ be a solution of $\operatorname{div}\left( {A(x,\nabla w)} \right) = 0$ in $\Om$. Then there exists 
	$\alpha_0\in (0,1]$ such that  for any $Q_\rho(x_0)\subset Q_R(x_0)\subset\Om$, and   $\ep\in (0,1)$, we have 
	\begin{equation}\label{oscp}
	\fint_{Q_\rho(x_0)} |w-(w)_{Q_\rho(x_0)}|^p dx \lesssim \left(\frac{\rho}{R}\right)^{\alpha_0 p} \fint_{Q_R(x_0)} |w-(w)_{Q_R(x_0)}|^p dx,
	\end{equation}  
	and 
	\begin{equation}\label{oscqk}
	\inf_{q\in \RR}\fint_{Q_\rho(x_0)} |w-q|^\ep dx \lesssim \left(\frac{\rho}{R}\right)^{\alpha_0 \ep} \inf_{q\in\RR}\fint_{Q_R(x_0)} |w-q|^\ep dx.
	\end{equation} 
\end{lemma}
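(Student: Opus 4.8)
The plan is to establish \eqref{oscp} first and then deduce \eqref{oscqk} from it. For \eqref{oscp}, the starting point is the classical interior regularity theory for solutions of the homogeneous equation $\operatorname{div}(A(x,\nabla w))=0$: under \eqref{condi1}--\eqref{condi2} such solutions are locally of class $C^{1,\beta}$ for some $\beta\in(0,1)$ (this is the DiBenedetto--Tolksdorf--Lieberman regularity theory), with the interior gradient bound
\[
\sup_{Q_{R/2}(x_0)}|\nabla w| \lesssim \left(\fint_{Q_R(x_0)}|\nabla w|^p\,dx\right)^{1/p}
\]
and an oscillation estimate $\operatorname{osc}_{Q_\rho(x_0)}\nabla w \lesssim (\rho/R)^{\beta}\sup_{Q_{R/2}(x_0)}|\nabla w|$ for $\rho\le R/2$. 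Combining these with a Poincar\'e inequality and a Caccioppoli inequality to convert the $L^p$ norm of $\nabla w$ on the right into the $L^p$ oscillation of $w$ itself yields \eqref{oscp} with $\alpha_0=\min\{\beta,1\}$ (after the routine step of reducing to $\rho\le R/2$ and absorbing the range $R/2<\rho\le R$ trivially). I would cite the standard references for the $C^{1,\beta}$ theory rather than reprove it.

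For \eqref{oscqk}, the idea is to pass from exponent $p$ to an arbitrary exponent $\ep\in(0,1)$. The key tool is the self-improving (reverse H\"older / Gehring-type) property: since $w$ solves a homogeneous equation, $w-q$ for the right affine choice of $q$ satisfies a reverse H\"older inequality, so that for all small exponents the quantities $\big(\inf_q\fint_{Q}|w-q|^s\big)^{1/s}$ are mutually comparable on comparable cubes. Concretely, I would first note that by \eqref{oscp} and the translation/dilation structure, $\inf_q \fint_{Q_\rho}|w-q|^p\,dx \lesssim (\rho/R)^{\alpha_0 p}\inf_q\fint_{Q_R}|w-q|^p\,dx$; then use the standard fact that for solutions of the homogeneous equation one has, for any $0<\ep<p$,
\[
\left(\inf_{q}\fint_{Q_\rho}|w-q|^p\right)^{1/p} \lesssim \left(\inf_{q}\fint_{Q_{2\rho}}|w-q|^\ep\right)^{1/\ep},
\]
which follows from a Caccioppoli inequality plus Sobolev--Poincar\'e and the Gehring lemma applied to $\nabla w$ (this is exactly the type of argument invoked in Remark \ref{ktoe} and \cite[Remark 6.12]{Giu}). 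Chaining these comparisons — replace the $L^\ep$-oscillation on $Q_\rho$ by the $L^p$-oscillation on $Q_{2\rho}$, apply the $L^p$ decay \eqref{oscp} from $Q_{R/2}$ down to $Q_{2\rho}$ (valid once $\rho\le R/8$, say), and then bound the $L^p$-oscillation on $Q_{R/2}$ by the $L^\ep$-oscillation on $Q_R$ — produces \eqref{oscqk}, with the cases $R/8<\rho\le R$ handled trivially since the left side is monotone in $\rho$.

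The main obstacle, and the only genuinely technical point, is the passage between exponents in \eqref{oscqk}: justifying that the infimum over $q$ of the $L^\ep$ mean oscillation controls the $L^p$ mean oscillation for solutions of a \emph{singular} ($1<p<3/2$) homogeneous $p$-Laplace type equation. One must be careful that the Caccioppoli and reverse H\"older constants are uniform and that the affine minimizers $q$ can be taken to be the same (e.g. $(w)_{Q_R}$) up to harmless errors. I would handle this by invoking the $C^{1,\beta}$ estimate once more: on $Q_{R/2}$, $w$ is Lipschitz with $\operatorname{Lip}(w;Q_{R/2}) \lesssim R^{-1}\big(\inf_q\fint_{Q_R}|w-q|^\ep\big)^{1/\ep}$ — this last bound itself being a consequence of the sup estimate for $\nabla w$ combined with interpolation between $L^\infty$ and $L^\ep$ oscillation using the reverse H\"older inequality — after which \eqref{oscqk} is immediate by comparing $w$ with the constant $w(x_0)$ on $Q_\rho$. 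This route sidesteps delicate exponent bookkeeping and reduces everything to the regularity theory plus elementary interpolation.
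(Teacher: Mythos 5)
Your argument leans on the DiBenedetto--Tolksdorf--Lieberman $C^{1,\beta}$ interior regularity for both parts of the lemma, but that theory is not available under the standing hypotheses here. Conditions \eqref{condi1}--\eqref{condi2} only impose pointwise growth and monotonicity in $\xi$; they allow $A(x,\xi)$ to depend on $x$ in a merely measurable way. Gradient H\"older continuity (and even local Lipschitz continuity) of solutions requires some modulus of continuity of $A$ in the $x$-variable (H\"older, or Dini as the paper imposes separately in Theorem \ref{gradpw} precisely when it needs $C^1$-type estimates). Under measurable coefficients the best one has is De Giorgi's $C^{0,\alpha}$ regularity for $w$ itself and a Gehring-type self-improving property for $\nabla w$; that is exactly what the paper relies on, citing \cite[Chapter 7]{Giu} for the Campanato-type decay \eqref{oscp}, with $\alpha_0$ the De Giorgi exponent (generally strictly less than $1$). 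Your final paragraph, which uses the sup bound $\mathrm{Lip}(w;Q_{R/2})\lesssim R^{-1}\big(\inf_q\fint_{Q_R}|w-q|^\varepsilon\big)^{1/\varepsilon}$ and then compares $w$ with $w(x_0)$, would yield the decay rate $\alpha_0=1$, which is simply false in this setting; this is the genuine gap.

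The reverse-H\"older chaining you sketch in the middle paragraph is the correct route to \eqref{oscqk} and is precisely the paper's argument: combine \eqref{oscp} (proved via De Giorgi, not $C^{1,\beta}$) with the fact that for solutions of the homogeneous equation
\[
\left(\inf_{q\in\RR}\fint_{Q_{R/2}}|w-q|^{p}\,dx\right)^{1/p}\lesssim\left(\inf_{q\in\RR}\fint_{Q_{R}}|w-q|^{\varepsilon}\,dx\right)^{1/\varepsilon},
\]
which follows from Caccioppoli plus Sobolev--Poincar\'e and a covering/iteration argument (cf.\ \cite[Remark 6.12]{Giu}), and the trivial inequality $\big(\fint|w-q|^{\varepsilon}\big)^{1/\varepsilon}\le\big(\fint|w-q|^{p}\big)^{1/p}$ on the small cube. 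That chain requires no smoothness of $A$ in $x$ and gives \eqref{oscqk} with the same $\alpha_0$. If you drop the $C^{1,\beta}$ detour entirely, base \eqref{oscp} on Giusti's Chapter 7, and keep the reverse-H\"older chain for \eqref{oscqk}, the proof closes.
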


\begin{proof}
	The proof of \eqref{oscp} follows from \cite[Chapter 7]{Giu}, whereas the proof of \eqref{oscqk} follows from \eqref{oscp} and the reverse H\"older property of $w$.
\end{proof}

We are now ready for the proof of Theorem \ref{upw}.

\begin{proof}[Proof of \eqref{upw}]
We first observe that for each cube $Q_\rho(x_0)\Subset\Om$ and $f\in L^\kappa_{\rm loc}(\Om)$, there exists $q_{\rho,x_0}=q_{\rho, x_0}(f)\in\RR$ such that 
	$$\inf_{q\in \RR} \left(\fint_{Q_\rho(x_0)} |f-q|^\kappa dx\right)^{\frac{1}{\kappa}}=\left(\fint_{Q_\rho(x_0)} |f-q_{\rho, x_0}|^\kappa dx\right)^{\frac{1}{\kappa}}.$$
	Then it is known that (see, e.g., \cite[Lemma 4.1]{DS})
	\begin{equation}\label{eLP}
	\lim_{\rho\rightarrow 0} q_{\rho,x_0}(f)=f(x_0) \quad \text{  a.e. with respect to } x_0\in\RR^n.
	\end{equation}
	For a cube $Q_\rho=Q_\rho(x_0)\subset\Omega$, we  now define
	$$
	\mathbf{J}(\rho, x_0)=\mathbf{J}(\rho, x_0,u):=	\inf_{q\in \RR} \left(\fint_{Q_\rho(x_0)} |u-q|^\kappa dx\right)^{\frac{1}{\kappa}}.
	$$
	This kind of quantity is a good substitution for the mean oscillation for functions that may not belong to $L^1_{\rm loc}$ (see, e.g., \cite[Section 4]{DS}).  \\
	Next, for any $\varepsilon\in (0,1)$ and $Q_{r}(x_0)\Subset\Omega$
	by Lemma \ref{osckkk} and quasi-triangle inequality,  we can find $\alpha_0\in (0,1]$ such that 
	\begin{align}\nonumber
	\mathbf{J}(\varepsilon r/2, x_0) &\leq  \left(\fint_{Q_{\varepsilon r/2}(x_0)} |u-q_{\varepsilon r/2, x_0}(w)|^\kappa dx\right)^{\frac{1}{\kappa}}\\
	&\leq C \left(\fint_{Q_{\varepsilon r/2}(x_0)} |w-q_{\varepsilon r/2, x_0}(w)|^\kappa dx\right)^{\frac{1}{\kappa}}+C	\left(\fint_{Q_{\varepsilon r/2}(x_0)} |u-w|^\kappa dx\right)^{\frac{1}{\kappa}}\nonumber\\
	&\leq
	C	\varepsilon^{\alpha_0} \left(\fint_{Q_{r/2}(x_0)} |w-q_{r, x_0}(w)|^\kappa dx\right)^{\frac{1}{\kappa}}+C\varepsilon^{-n/\kappa}	\left(\fint_{Q_{ r/2}(x_0)} |u-w|^\kappa dx\right)^{\frac{1}{\kappa}}.\nonumber	
	\end{align}
	Here we choose $w$ as in 	\eqref{eq1} with $Q_{r/2}(x_0)$ in place of $Q_{r}(x_0)$.\\
	Thus after similar manipulations we get  	 
	\begin{align*}
	\mathbf{J}(\varepsilon r/2, x_0)\leq C\varepsilon^{\alpha_0}\mathbf{J}(r/2, x_0)+ C_\varepsilon	\left(\fint_{Q_{ r/2}(x_0)} |u-w|^\kappa dx\right)^{\frac{1}{\kappa}}.
	\end{align*}
	We now apply  Theorem \ref{maininterior} to bound the second term on the right-hand side of the above inequality. This yields that 
	$$
	\mathbf{J}(\varepsilon r/2, x_0)\leq C\varepsilon^{\alpha_0}\mathbf{J}(r/2, x_0)+C_\varepsilon\left(\frac{|\mu|(Q_{r}(x_0))}{r^{n-p}}\right)^{\frac{1}{p-1}}+C_\varepsilon \frac{|\mu|(Q_{r}(x_0))}{r^{n-p}} \mathbf{J}(r, x_0)^{2-p}.$$
	Then by Young's inequality we find
	\begin{align}\label{intA}
	\mathbf{J}(\varepsilon r/2, x_0)&\leq C\varepsilon^{\alpha_0}\mathbf{J}(r, x_0)+C_\varepsilon\left(\frac{|\mu|(Q_{r}(x_0))}{r^{n-p}}\right)^{\frac{1}{p-1}}.
	\end{align}
	We now choose $\varepsilon <\frac{1}{4\sqrt{n}}$ small enough so that $
	C(\varepsilon)^{\alpha_0}\leq \frac{1}{4}$, where $C$ is the constant in \eqref{intA}.
	Set $R_j=(\varepsilon/2)^{j} R$, $Q_j:=Q_{R_j}(x_0)$.  Applying \eqref{intA}  yields 
	$$
	\mathbf{J}(R_{j+1}, x_0) \leq \frac{1}{4} \mathbf{J}(R_j, x_0) +C\left( \frac{|\mu|(Q_{j})}{R_{j}^{n-p}}\right)^{\frac{1}{p-1}}. 
	$$
	Summing this up over $j\in \{2,3,...,m-1\}$, we obtain 
	\begin{align}\label{z1}
	\sum_{j=2}^{m}\mathbf{J}(R_j, x_0) &\leq C\, \mathbf{J}(R_{2}, x_0) +C\sum_{j=2}^{m-1}\left(\frac{|\mu|(Q_{j})}{R_{j}^{n-p}}\right)^{\frac{1}{p-1}}.
	\end{align}
	It is not hard to see that 
	$$|q_{R_{j+1}, x_0}(u)-q_{R_{j}, x_0}(u)| \leq C (\mathbf{J}(R_{j+1}, x_0)+\mathbf{J}(R_j, x_0)) $$
	for all $j\geq 1$, and then by iterating we find
	$$|q_{R_{m}, x_0}(u)-q_{R_{2}, x_0}(u)| \leq C \sum_{j=2}^{m}\mathbf{J}(R_j, x_0). $$
	Note also that 
$$
	\mathbf{J}(R_{2}, x_0) +  |q_{R_{2}, x_0}(u)| \leq C \mathbf{J}(R_{2}, x_0) +  C \left(\fint_{Q_{2}} |u|^\kappa dx\right)^{\frac{1}{\kappa}}\leq C \left(\fint_{B_{R}(x_0)} |u|^\kappa dx\right)^{\frac{1}{\kappa}}.$$
	Thus using these in   \eqref{z1} we get 
	\begin{align*}
	|q_{R_{m}, x_0}(u)| &  \leq  C\, \mathbf{J}(R_{2}, x_0) + C |q_{R_{2}, x_0}(u)| +C\sum_{j=2}^{m-1}\left(\frac{|\mu|(Q_{j})}{R_{j}^{n-p}}\right)^{\frac{1}{p-1}}\nonumber\\
	&\leq C  \left(\fint_{B_{R}(x_0)} |u|^\kappa dx\right)^{\frac{1}{\kappa}} + C \sum_{j=2}^{m-1}\left(\frac{|\mu|(Q_{j})}{R_{j}^{n-p}}\right)^{\frac{1}{p-1}}.
	\end{align*}
	Note that 
$$
	\sum_{j=2}^{m-1}\left(\frac{|\mu|(Q_{j})}{R_{j}^{n-p}}\right)^{\frac{1}{p-1}} \leq  \sum_{j=2}^{m-1}\left(\frac{|\mu|(B_{\sqrt{n}R_j}(x_0))}{R_{j}^{n-p}}\right)^{\frac{1}{p-1}}
	\leq  C\, {\bf W}^{R}_{1,p}(|\mu|)(x_0).$$
	Thus in view of \eqref{eLP} we obtain \eqref{Wpointwiseu} as desired.
\end{proof}

	\section{Proof of Theorem \ref{main-Ric}}
	
	In this section, we provide a proof of Theorem \ref{main-Ric}.
	
		
	\noindent 	Step 1: In this step we  assume that $\mu\in L^\infty(\Om)$. 	Set 
		\begin{equation*}
		E(\mu):=\left\{ u \in W^{1,p}_0(\Om): |\nabla u(x)|\leq 2C_0\mathbf{P}[\mu](x) {\rm ~ a.e.}\right\},
		\end{equation*}
		where  $C_0$ is the constant in \eqref{Wpointwiseu"'}, and  
		\begin{equation*}
		\mathbf{P}[\mu](x):=	\Big[ {\bf I}_{1}^{2{\rm diam}(\Om)}(|\mu|)(x)\Big]^{\frac{1}{p-1}}, \qquad x\in \RR^n.
		\end{equation*}
		Here we extend $u$ by zero outside $\Om$.
		Note that by  inequality (2.10)  of \cite{Ph1} and \eqref{capcondi}, one has 
		\begin{equation}\label{zz}
		\mathbf{P}[\mathbf{P}[\mu]^q](x)\leq C_1 c_0^{\frac{q-p+1}{(p-1)^2}}\mathbf{P}[\mu](x) \qquad\forall x\in \mathbb{R}^n. 
		\end{equation}
		
		Now let $S: E(\mu)\rightarrow W_0^{1,p}(\Om)$ be defined by 
		$S(v)=u$ where $u\in W_0^{1,p}(\Om)$ is the unique renormalized  solution of
		\begin{eqnarray*}
			\left\{\begin{array}{rcl}
				-{\rm div}(A(x, \nabla u)) &=& |\nabla v|^q + \mu \quad {\rm in}~ \Om,\\
				u&=&0\quad ~~~~~~~~~~ {\rm on~}\partial\Om.
			\end{array}
			\right.
		\end{eqnarray*}
		
		By \eqref{Wpointwiseu"'} and \eqref{zz}, one obtains
		\begin{align*}
		|\nabla u(x)|&\leq C_0 	\mathbf{P}[ |\nabla u|^{q}+|\mu|](x) \leq C_0\left( (2C_0)^q\mathbf{P}[  \mathbf{P}[\mu]^q](x)+	\mathbf{P}[|\mu|](x)\right)\\
		&\leq C_0\left( (2C_0)^qC_1 c_0^{\frac{q-p+1}{(p-1)^2}}+	1\right)\mathbf{P}[|\mu|](x) \leq 
		2C_0 \mathbf{P}[|\mu|](x)
		\end{align*}
		provided  $c_0>0$ is  small enough. This means that $S(E(\mu))\subset E(\mu)$. Then, similar to \cite[Proof of Theorem 1.9]{HPanna}, we obtain that $S$ has  a fixed point  in  $E(\mu)$. Thus, there exists a solution $u\in E(\mu)$ to 
		\begin{eqnarray}\label{zzz}
			\left\{\begin{array}{rcl}
				-{\rm div}(A(x, \nabla u)) &=& |\nabla u|^q + \mu \quad {\rm in}~ \Om,\\
				u&=&0\quad ~~~~~~~~~~ {\rm on~}\partial\Om.
			\end{array}
			\right.
		\end{eqnarray}
		Step 2: We now extend $\mu$ be zero out side $\Om$ and let $\mu_k=\rho_k*\mu$, where ${\{\rho_k\}}_{k>0}$ is a standard
		sequence of mollifiers: $\rho_k(\cdot):= k^n\rho(k\cdot)$ for a nonnegative and radial function $\rho\in C_0^\infty(B_1(0))$ such that  $\int_{\RR^n} \rho dx=1$. It is not hard to see that there exists a constant $B>0$ such that 
		$$|\mu_k|(K) \leq  B c_0 \, {\rm Cap}_{1,\, \frac{q}{q-p+1}}(K)$$
		for any compact set $K\subset\Om$; see \cite[Lemma 5.7]{Ph2}. Thus, by Step 1, for each $k>0$ there exists a renormalized solution $u_k\in E(\mu_k)$ to \eqref{zzz} with datum $\mu=\mu_k$.
		Also, by (5.1) in  \cite{HPanna} we have 
		$${\bf I}_1^{2{\rm diam}(\Om)}(|\mu_k|)\leq \rho_k * {\bf I}_1^{2{\rm diam}(\Om)}(|\mu|)\leq M\, {\bf I}_1^{2{\rm diam}(\Om)}(|\mu|).$$
		Thus by the stability result for  renormalized solutions of \cite{11DMOP}, we can find a subsequence of $\{u_k\}_k$ that converges to  
		 a renormalized solution $u$ to the equation \eqref{zzz} such that 
		 $$|\nabla u(x)|\leq 2C_0 M^{\frac{1}{p-1}}\mathbf{P}[\mu](x) {\rm ~ a.e.}$$
		This completes the proof of the theorem.
		

\begin{remark} Finally, we remark that due to a comparison estimate obtained earlier in \cite{HPanna} for the case $\frac{3n-2}{2n-1}<p\leq 2-1/n$, all results in this paper also hold in this case. Moreover, local and global estimates `below the duality exponent' in the spirit of \cite{M10, Ph3, Ph4} for $1<p\leq 2-1/n$ can also be deduced from the comparison estimates of  Theorems \ref{maininterior} and \ref{mainboundary} and \cite{HPanna}.	 
\end{remark}
	\textbf{Acknowledgments:} Q.H.N.  is supported by the Academy of Mathematics and Systems Science, Chinese Academy of Sciences startup fund, and the National Natural Science Foundation of China (12050410257).	N.C.P. is supported in part by Simons Foundation (award number 426071). The authors would like to thank  Hongjie Dong for helpful comments. 

\end{document}